\titleformat{\subsection}[runin] 
  {\normalfont\bfseries} 
  {\thesubsection} 
  {0.5em} 
  {} 
\titleformat{\subsubsection}[runin]
  {\normalfont\itshape} 
  {\thesubsubsection} 
  {0.5em} 
  {}
\theoremstyle{sltheoremstyle}
\newtheorem{theorem}{Theorem}[section]
\newtheorem{lemma}[theorem]{Lemma}
\newtheorem{corollary}[theorem]{Corollary}
\newtheorem{proposition}[theorem]{Proposition}
\newtheorem{conjecture}[theorem]{Conjecture}
\newtheorem*{corollary-non}{Corollary}
\newtheorem*{lemma-non}{Lemma}
\newtheorem*{theorem-non}{Theorem}
\newtheorem*{proposition-non}{Proposition}
\newtheorem*{condition-non}{Condition}
\newtheorem*{conditions-non}{Conditions}
\theoremstyle{definition}
\newtheorem{remark}[theorem]{Remark}
\newtheorem{definition}[theorem]{Definition}
\newtheorem{example}[theorem]{Example}
\newtheorem{examples}[theorem]{Examples}
\newcommand{\PSL}{\textnormal{PSL}}
\newcommand{\set}[1]{\left\{ #1 \right\}}
\newcommand{\va}[1]{\left| #1 \right|}
\newcommand{\Isom}{\textnormal{Isom}}
\newcommand{\etale}{\textnormal{\'et}}
\newcommand{\Id}{\textnormal{Id}}
\newcommand{\Homeo}{\textnormal{Homeo}}
\newcommand{\Stab}{\textnormal{Stab}}
\newcommand{\sm}{\setminus}
\newcommand{\CC}{\mathbb{C}}
\newcommand{\OO}{\mathcal{O}}
\newcommand{\PP}{\mathbb{P}}
\newcommand{\PGL}{\textnormal{PGL}}
\newcommand{\et}{\textnormal{\'et}}
\newcommand{\SL}{\textnormal{SL}}
\newcommand{\RR}{\mathbb{R}}
\newcommand{\ZZ}{\mathbb{Z}}
\newcommand{\Ker}{\textnormal{Ker}}
\newcommand{\Gal}{\textnormal{Gal}}
\newcommand{\Aut}{\textnormal{Aut}}
\newcommand{\Spec}{\textnormal{Spec}}
\newcommand{\ca}[1]{{\mathcal{#1}}}
\newcommand{\bb}[1]{{\mathbb{#1}}}
\newcommand{\msf}[1]{{\mathsf{#1}}}
\newcommand{\mr}[1]{{\mathscr{#1}}}
\let\rm\relax 
\newcommand{\rm}[1]{{\mathrm{#1}}}
\DeclareSymbolFont{bbm}{U}{bbm}{m}{n}
\DeclareSymbolFontAlphabet{\mathbbm}{bbm}
\newcommand{\vXR}{\va{\ca X(\RR)}}
\begin{document}

\title{\Large{\textbf{On the topology of real algebraic stacks}}}

\author{Emiliano Ambrosi and Olivier de Gaay Fortman \\ \today}

\date{}

\onehalfspacing

\maketitle 
\abstract{\noindent
\emph{Motivated by questions arising in the theory of moduli spaces in real algebraic geometry, we develop a range of methods to study the topology of the real locus of a Deligne--Mumford stack over the real numbers. As an application, we verify in several cases the Smith--Thom type inequality for stacks that we conjectured in an earlier work. This requires combining techniques from group theory, algebraic geometry, and topology.
}

\section{Introduction}
This work is the second in a two-part series dedicated to the study of the topology of real Deligne–Mumford stacks. In the first paper \cite{AdGF-grup}, we established several foundational topological properties of such stacks and formulated a conjecture on their cohomology, extending the classical Smith–Thom inequality to the stack-theoretic setting. In the present paper, we develop techniques for computing the topology of various classes of real Deligne–Mumford stacks -- such as finite quotient stacks and gerbes over real varieties -- and apply these methods to verify the conjecture in a range of cases.
\subsection{Smith--Thom inequality for real algebraic varieties.} We begin by recalling the conjecture and our motivation behind it. Let $X$ be a real algebraic variety, by which we mean a reduced and separated scheme of finite type over $\RR$. 
One of the foundational results in real algebraic geometry (see \cite{floydperiodic, borel-seminar, thom-homologie, itenberg-enriques, mangolte} for various proofs) is the Smith--Thom inequality
\begin{align} \label{align:inequality-ST}
h^*(X(\RR))= 
\dim \rm{H}^\ast(X(\RR),\ZZ/2) \leq 
\dim \rm{H}^\ast(X(\CC), \ZZ/2)= h^*(X(\CC)).
\end{align}
It allows one to bound the cohomology of $X(\RR)$ in terms of the cohomology of $X(\CC)$, usually much easier to compute. Here, and in the sequel, $h^\ast(Y)$ denotes the dimension of the cohomology ring $\rm H^\ast(Y,\ZZ/2) = \oplus_{i \geq 0} \rm H^i(Y,\ZZ/2)$ of a topological space $Y$. 

\subsection{Smith--Thom inequality for real algebraic stacks.}
In recent years, there has been increasing interest in moduli problems over $\RR$ (see e.g.\ 
\cite{gross-harris, seppalasilhol, cohomology-M0n}), particularly in determining whether \eqref{align:inequality-ST} attains equality for the associated coarse moduli space. Notable cases include moduli spaces of stable vector bundles on a curve \cite{brugalleschaffhauser-2022}, Hilbert schemes of points \cite{fu2023maximalrealvarietiesmoduli, kharlamov2024unexpectedlossmaximalitycase, kharlamov2025smiththomdeficiencyhilbertsquares}, and symmetric powers of varieties \cite{biswasmello-2017, franz-2018}. Note, however, that such a study says something about the \emph{real moduli space} associated to the moduli problem only if this real moduli spaces arises as the real locus of the coarse moduli space, a phenomenon which in fact seems rare. For instance, if $\msf A_1$ is the coarse moduli space of elliptic curves, then $\msf A_1(\RR) = \RR$ parametrizes complex elliptic curves that admit a real structure up to complex isomorphism, whereas the real moduli space of real elliptic curves has two connected components (there are exactly two topological types of real elliptic curves).

To bypass this limitation, and start a 
systematic approach to study the topology of real moduli spaces, one is led to consider real algebraic stacks. 
If $\ca X$ is such a stack, then $ \mathcal X(\mathbb R) $ is a category rather than a set. To obtain a topological space in a way that generalizes the euclidean topology on $X(\mathbb R)$ when $X$ is a real variety, one considers the set $\va{ \mathcal X(\mathbb R)}$ of isomorphism classes of the category $\mathcal  X(\mathbb R)$, and defines a natural topology on $\vert \mathcal X(\mathbb R)\vert $
as in \cite{degaay-realmoduli}. A similar procedure defines a topology on the set $\va{\ca X(\CC)}$ of isomorphism classes of $\ca X(\CC)$ (if $\ca X$ is separated Deligne--Mumford, the latter coincides with the topology on $\va{\ca X(\CC)} \simeq M(\CC)$ induced by the coarse moduli space $\ca X \to M$).

The advantage of this perspective is that when the algebraic stack 
$\ca X$ represents a moduli problem—parametrizing equivalence classes of certain algebraic objects (such as genus 
$g$ curves, or sheaves on a fixed variety)—the set 
$\va{\ca X(\RR)}$ corresponds to the real isomorphism classes of the real objects. For instance, 
$\va{\ca M_g(\RR)}$  represents the space of isomorphism classes of real algebraic curves of genus $g$. 

In order to understand the topological properties of $\va{\ca X(\RR)}$, a natural first step is to understand whether the foundational inequality (\ref{align:inequality-ST}) generalizes to this setting. As already mentioned in \cite{AdGF-grup}, this is not the case, as e.g.\ the moduli space $\mathcal A_1$ of elliptic curves example shows: we have $h^*(\vert \mathcal A_1(\mathbb C)\vert)=1$ while $h^*(\vert \mathcal A_1(\mathbb R)\vert)=2$.

The main challenge in extending the Smith–Thom inequality \eqref{align:inequality-ST} to algebraic stacks is that, although \(\lvert \mathcal{X}(\mathbb{C}) \rvert\) is equipped with an involution  
$
\sigma \colon \lvert \mathcal{X}(\mathbb{C}) \rvert \to \lvert \mathcal{X}(\mathbb{C}) \rvert
$  
which generalizes complex conjugation on the complex locus of a real variety, the natural map  
\begin{align}\label{align:real-complex-map}
\lvert \mathcal{X}(\mathbb{R}) \rvert \to \lvert \mathcal{X}(\mathbb{C}) \rvert^\sigma
\end{align}
is, even in simple cases, neither injective (Example \ref{ex:A1modZ2}) nor surjective (Example \ref{ex:A1modZ2xZ2}). 

The failure of surjectivity of \eqref{align:real-complex-map} is due to the existence of isomorphism classes of objects $x \in \ca X(\CC)$ which are isomorphic to their complex conjugate, but not defined over $\RR$.  The failure of injectivity is measured by the following observation: for $x \in \ca X(\RR)$, the fiber of \eqref{align:real-complex-map} above the image of $x$ in $ \lvert \mathcal{X}(\mathbb{C}) \rvert^{\sigma}$ is in canonical bijection with the first non-abelian Galois cohomology group $\rm  H^1(G, \Aut(x_\CC))$, where
     $$G \coloneqq \Gal(\CC/\RR) \simeq \ZZ/2.$$
This reveals that the complex locus \(\lvert \mathcal{X}(\mathbb{C})\vert\) of a real stack $\ca X$ is in a sense too small to fully encode all 
information on the topology of
\(\lvert \mathcal{X}(\mathbb{R}) \rvert\), as it
does not capture 
the automorphisms of objects in \(\mathcal{X}(\mathbb{C})\). 
In \cite{AdGF-grup} we took these stabilizer groups into account by considering the inertia stack 
$\mathcal{I}_{\mathcal{X}}$, whose complex locus $\ca I_{\ca X}(\CC)$ consists of pairs \((x, \phi)\) with \(x \in \mathcal{X}(\mathbb{C})\) and \(\phi\) an automorphism of \(x\). 
We conjectured 
the following generalization of the Smith–Thom inequality \eqref{align:inequality-ST} to real Deligne–Mumford stacks.
\begin{conjecture}[cf.\ \cite{AdGF-grup}, Conjecture 1.7] \label{conj:ST}
Let $\ca X$ be a separated Deligne--Mumford stack of finite type over $\RR$, with inertia $\ca I_{\ca X} \to \ca X$. 
Then the following inequality holds:
\begin{align}\label{align:inequality-ST-stacks}
\dim \rm{H}^\ast(\va{\ca X(\RR)},\ZZ/2) \leq 
\dim \rm{H}^\ast( \vert \ca I_{\ca X}(\CC)\vert, \ZZ/2). 
\end{align}
\end{conjecture}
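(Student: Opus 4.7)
The plan is to treat Conjecture \ref{conj:ST} as a stack-theoretic upgrade of the classical Smith--Thom inequality and, via étale descent, reduce it to the case of finite quotient stacks $\ca X = [Y/G]$, where $Y$ is a real variety and $G$ a finite group acting on $Y$ over $\RR$. Since every separated Deligne--Mumford stack is étale-locally of this form, this reduction is natural; however, it is delicate, because mod-$2$ cohomological bounds for $\va{\ca X(\RR)}$ do not glue straightforwardly from étale-local ones for the topologies of \cite{degaay-realmoduli}. A Mayer--Vietoris--type argument, combined with the foundational results of \cite{AdGF-grup}, should permit this reduction.

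Next, for $\ca X = [Y/G]$, form the semidirect product $\Gamma := G \rtimes \Gal(\CC/\RR)$ acting on $Y(\CC)$: an element $(g, 0)$ acts holomorphically via $y \mapsto g \cdot y$, while $(g, 1)$ acts antiholomorphically via $y \mapsto g \cdot \bar{y}$. A direct calculation using the classification of $G$-torsors over $\RR$ yields the decompositions
\begin{align*}
\va{\ca X(\RR)} &\cong \bigsqcup_{[\gamma]} Y(\CC)^{\gamma}/Z_G(\gamma), \\
\va{\ca I_{\ca X}(\CC)} &\cong \bigsqcup_{[g]} Y(\CC)^{g}/Z_G(g),
\end{align*}
where the first union is indexed by $G$-conjugacy classes of antiholomorphic involutions $\gamma = (g, 1) \in \Gamma$ (equivalently, conjugacy classes of $g \in G$ with $g \bar{g} = 1$), and the second by $G$-conjugacy classes of elements of $G$. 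Both sides thus appear as disjoint unions of quotients of fixed loci of elements of $\Gamma$, placing the conjecture within reach of equivariant topology. For each antiholomorphic involution $\gamma$, the classical Smith--Thom inequality gives $h^*(Y(\CC)^\gamma) \leq h^*(Y(\CC))$, and a $Z_G(\gamma)$-equivariant refinement (via Borel equivariant cohomology, or induction on $|Z_G(\gamma)|$) should bound each component of $\va{\ca X(\RR)}$ by an appropriate subspace of $\va{\ca I_{\ca X}(\CC)}$.

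The main obstacle is the global bookkeeping: assembling these component-wise inequalities into a single bound for $h^*(\va{\ca X(\RR)})$ in terms of $h^*(\va{\ca I_{\ca X}(\CC)})$. On the right-hand side, components indexed by $g \in G$ with $g^2 \neq 1$ contribute cohomology without corresponding to any component of $\va{\ca X(\RR)}$, which should work in our favour; less straightforwardly, however, for each involution $g \in G$ the component $Y(\CC)^\gamma / Z_G(\gamma)$ of $\va{\ca X(\RR)}$ and its matching component $Y(\CC)^g/Z_G(g)$ of $\va{\ca I_{\ca X}(\CC)}$ are genuinely different subspaces of $Y(\CC)$, so the comparison between them does not reduce to Smith--Thom on a single ambient space. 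A natural route is to first apply Smith--Thom to the Galois involution $\sigma$ on $\va{\ca I_{\ca X}(\CC)}$, yielding $h^*(\va{\ca I_{\ca X}(\CC)}^\sigma) \leq h^*(\va{\ca I_{\ca X}(\CC)})$, and then compare $\va{\ca X(\RR)}$ with $\va{\ca I_{\ca X}(\CC)}^\sigma$ using the group-theoretic tools of \cite{AdGF-grup} to control the fibers of the natural map between them. This last comparison is where the main novel work is expected to reside, and also where the restriction to particular classes of stacks (finite quotient stacks, gerbes) naturally enters, since in each of these settings the relevant automorphism groups are rigid enough for an explicit analysis.
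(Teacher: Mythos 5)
This statement is a \emph{conjecture}, and the paper does not prove it in general; it only verifies it in a range of special cases (Theorem \ref{thm:introST-curves} for stacky curves, the corollary following Theorem \ref{thm:introlemme-prefere} for symmetric squares and $[A/\langle-1\rangle]$, and Corollaries \ref{cor:gerbesovercurves}--\ref{cor:gerbesovercurvesconcrete} for split gerbes over curves). So no complete proof exists to compare against; your proposal is a strategy for proving something the authors themselves leave open.

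That said, your two decomposition formulas are essentially correct (for a constant finite group $G$ they coincide with Theorem \ref{thm:introlemme-prefere} and Proposition \ref{prop:inertia-description}, once one identifies $Y(\CC)^\gamma$ with the twisted real locus $X_\gamma(\RR)$ and $Z_G(\gamma)$ with $\Gamma_\gamma(\RR)$), and the observation that the complex inertia has ``extra'' components indexed by non-involutions is a genuine observation that the paper also exploits. But the two steps you flag as ``should permit'' and ``should bound'' are precisely the ones that cannot be filled in by the route you propose. First, the étale-descent / Mayer--Vietoris reduction to quotient stacks is blocked by a structural obstruction the paper points out explicitly: unlike the classical Smith--Thom inequality, which holds trivially locally because varieties are locally contractible, inequality \eqref{align:inequality-ST-stacks} is genuinely global and ``does not seem to be locally trivial'' even for $[X/\Gamma]$. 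Local bounds on $h^\ast$ do not glue, and the paper never attempts such a reduction; Lemma \ref{lemma:local-structure} is used only to prove closedness of the coarse map, not the inequality. Second, there is no general ``equivariant Smith--Thom'' comparing $h^\ast(Y(\CC)^\gamma/Z_G(\gamma))$ with $h^\ast(Y(\CC)^g/Z_G(g))$ for a matching involution $g$; these quotients live over different subgroups and different fixed loci, and the proposed intermediate step $h^\ast(\va{\ca I_{\ca X}(\CC)}^\sigma)\leq h^\ast(\va{\ca I_{\ca X}(\CC)})$ does not connect to $\va{\ca X(\RR)}$ because the map $\va{\ca X(\RR)}\to\va{\ca X(\CC)}^\sigma$ is neither injective nor surjective in general (Examples \ref{ex:A1modZ2} and \ref{ex:A1modZ2xZ2}). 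Where the paper succeeds (Theorem \ref{thm:introST-curves}), it takes a different path: it computes $h^\ast(\va{\ca X(\RR)})$ directly from the combinatorics of the covering $\va{\ca X(\RR)}\to M(\RR)$ and the branch locus, and bounds it by $h^\ast(\va{\ca I_{\ca X}(\CC)})$ via the classical Smith--Thom inequality applied to the coarse space $X/Q$ together with an explicit count of inertia components (Propositions \ref{prop:inertia-curves-description}--\ref{prop:betti-nr-inertia-abelian}), crucially exploiting the one-dimensionality to control the local topology (Lemma \ref{lem:local-groupstructure-curves}, Proposition \ref{prop:topXR-faithful}).
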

Recall that if $\mathcal X$ is a scheme, then $\ca I_{\mathcal X}\rightarrow \mathcal X$ is an isomorphism. Thus, in that case,  (\ref{align:inequality-ST-stacks}) reduces to the classical Smith--Thom inequality (\ref{align:inequality-ST}). 

We warn the reader that, in general, there is no natural embedding of $\va{\ca X(\RR)}$ into $\vert \ca I_{\ca X}(\CC)\vert$. 
For example, take an elliptic curve $E$ over $\RR$ such that $h^\ast(E(\RR)) = 4$, and consider the stacky quotient $\mathcal X \coloneqq [E/ \langle -1 \rangle ]$, where $-1 \colon E\rightarrow E$ is the multiplication by $-1$. One can show (see Example  \ref{ex:exampleabelianvarieties}) that $\vert \mathcal X(\mathbb R)\vert\simeq [0, 1]\coprod [0, 1]\coprod [0, 1]\coprod [0, 1]$, and that $\vert \ca I_{\ca X}(\CC)\vert \simeq \mathbb P^1(\mathbb C)\coprod \left(\coprod_{x\in E(\mathbb C)[2]}\{x\})) \right)$. The situation is depicted in Figure \ref{fig: E:pm1} below. Although there is no natural embedding $E(\RR) \coprod E(\RR) \to \va{\ca I_{[E/\langle -1 \rangle ]}(\CC)}$, inequality (\ref{align:inequality-ST-stacks}) holds in this case: we have $h^\ast(\va{\ca X(\RR)} = 4$ and $h^\ast(\vert \ca I_{\ca X}(\CC)\vert) = 2 + 4 = 6$. 
\begin{center}
\begin{figure}[h!]
\begin{center}
\begin{picture}(0,130)
\put(-115,6){$\vert [E/\langle - 1 \rangle](\mathbb R)\vert$}
\put(90,6){$\vert \mathcal I_{[E/\langle -1 \rangle]}(\mathbb C)\vert$}
\put(-170,15){\includegraphics[width=0.8\textwidth]{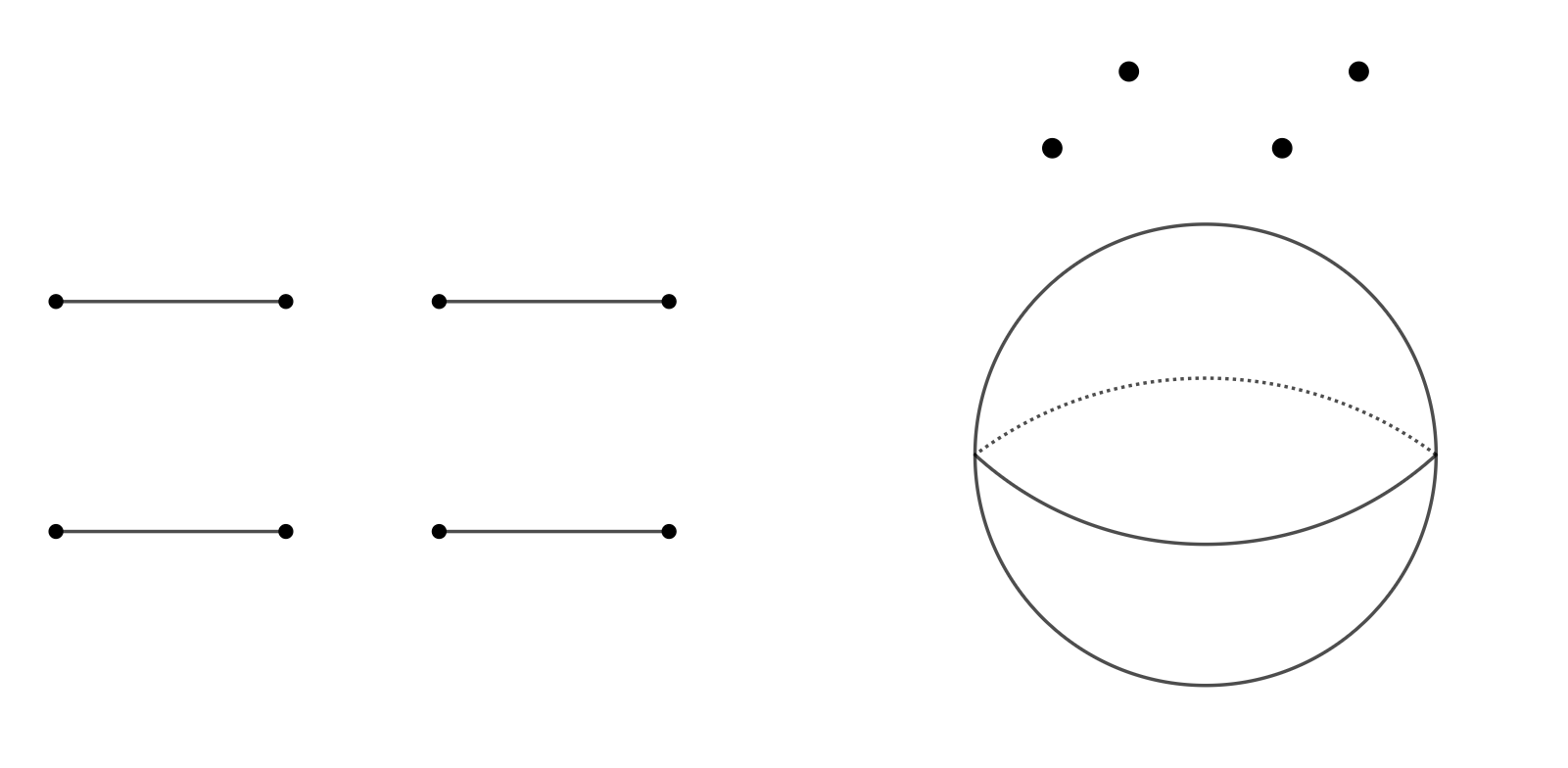}}
\end{picture}
\caption{The real locus and inertia of the stack $[E/\langle -1 \rangle]$.}
\label{fig: E:pm1}
\end{center}
\end{figure}
\end{center}
\subsection{General topological properties of real DM stacks and applications to the Smith--Thom inequality.}
A distinctive feature of the classical Smith–Thom inequality (\ref{align:inequality-ST}) is its inherently global nature: since varieties are locally contractible, the inequality holds trivially at the local level. In contrast, the inequality proposed in Conjecture \ref{conj:ST} does not seem locally trivial. Indeed, any separated Deligne--Mumford stack over $\RR$ is \'etale locally isomorphic to a quotient stack $[X/\Gamma]$, where $X$ is a real algebraic variety and $\Gamma$ is a finite group scheme $\Gamma$ over $\RR$ acting on $X$ over $\RR$, see 
Lemma \ref{lemma:local-structure}. 
Even for $[X/\Gamma]$, Conjecture \ref{conj:ST} does not appear straightforward: the topology of a real quotient stack can be rather complicated as the following theorem shows. 

Let $\Gamma$ be a finite group scheme over $\RR$, so that $G$ acts on $\Gamma(\mathbb C)$ via an involution $\sigma \colon \Gamma(\CC) \to \Gamma(\CC)$, and let $X$ be a quasi-projective scheme over $\RR$ on which $\Gamma$ acts. Choose a complete set of representatives $H \subset \Gamma(\CC)$ for the non-abelian Galois cohomology group
$\rm H^1(G,\Gamma(\mathbb C))=\{\gamma\in \Gamma(\CC) \text{ such that $\gamma\sigma(\gamma)=e$}\}/_\sim$
where $\sim$ is the equivalence relation $\gamma \sim \beta \gamma \sigma(\beta)^{-1}$ for $\beta \in \Gamma(\CC)$. Via a twisting procedure, one can associate to each $\gamma\in H$ a finite group scheme $\Gamma_{\gamma}$ and a quasi-projective scheme $X_{\gamma}$ over $\RR$, as well as a natural action of 
$\Gamma_{\gamma}$ on $X_{\gamma}$ over $\RR$ (see Section \ref{sec : twistingdefinition} for details). 

\begin{theorem}\label{thm:introlemme-prefere}
Consider the above notation. In particular, $X$ is a quasi-projective scheme over $\RR$ endowed with the action of a finite group $\Gamma$, and we let $[X/\Gamma]$ denote the associated quotient stack. Then there is a canonical homeomorphism
    \begin{align*}
    \va{[X/\Gamma](\RR)} \xlongrightarrow{\sim} \coprod_{\gamma \in H} X_\gamma(\RR)/\Gamma_\gamma(\RR).
    \end{align*}
\end{theorem}
Theorem \ref{thm:introlemme-prefere} has a number of positive implications when it comes to the Smith--Thom inequality for stacks. To start with, it has the following consequence. 

 \begin{corollary}
 Let $X$ be a real variety and consider the stack $\ca X = [(X \times_\RR X) / (\ZZ/2)]$, where $\ZZ/2$ acts on $X\times_\RR X$ by permuting the factors. Then Conjecture \ref{conj:ST} holds for $\ca X$. 
 \end{corollary}

 In Section \ref{sec:smiththom-application-quotient}, we use Theorem \ref{thm:introlemme-prefere} to prove Conjecture \ref{conj:ST} in a several other examples. 
Moreover, by combining Theorem \ref{thm:introlemme-prefere} with a lemma that computes the étal-local structure of real Deligne--Mumford stacks 
(see Lemma \ref{lemma:local-structure}), we 
prove the following general result on the topology of real algebraic stacks. 

\begin{theorem} \label{thm:coarse-map-closed}
        Let $\ca X$ be a separated Deligne--Mumford stack of finite type over $\RR$, with coarse moduli space $\ca X \to M$. Then the  morphism on real loci $\va{\ca X(\RR)} \to M(\RR)$ is closed. 
\end{theorem}
In turn, we use Theorem \ref{thm:coarse-map-closed} to prove 
the Smith--Thom inequality \eqref{align:inequality-ST-stacks} for proper real Deligne--Mumford stacks whose non-trivial stabilizer locus is discrete:

\begin{theorem}\label{thm : discrete}
Let $\mathcal X$ be a proper Deligne--Mumford stack of finite type over $\RR$, whose locus of points with non-trivial stabilizer is discrete. 
Then Conjecture \ref{conj:ST} holds for $\ca X$. 
\end{theorem}

The above theorem applies e.g.\ to Kummer-style quotients $[A/ (\mathbb Z/2)]$, where $A$ is a real abelian variety and $\mathbb Z/2$ acts on $A$ via the inversion $-1 \colon A \to A$, see Example \ref{ex:exampleabelianvarieties}.


\subsection{Smith--Thom for stacky curves.}\label{subsec:intro:quotient}

Another corollary of Theorem \ref{thm : discrete} is that it implies Conjecture \ref{conj:ST} for stacky quotients $\ca X = [X/\Gamma]$ of a smooth proper real algebraic curve $X$ by a finite $\RR$-group scheme $\Gamma$ 
acting faithfully on $X$. 
We enhance this result in 
Section \ref{sec:STforcurves}, where we prove Conjecture \ref{conj:ST} for stacky quotients of real curves by finite group schemes that are abelian or act faithfully:
 
\begin{theorem} \label{thm:introST-curves}
Let $X$ be a smooth real curve, and let $\Gamma$ be a finite $\RR$-group scheme, equipped with an action on $X$ over $\RR$.  
Assume that $\Gamma$ is abelian, or that the action is faithful. 
Then Conjecture \ref{conj:ST} holds for the quotient stack $\ca X = [X/\Gamma]$. 
\end{theorem}
Here, a \emph{real curve} is a one-dimensional variety over $\RR$, not necessarily proper (cf.~Section \ref{sec:notation}).
The proof of Theorem \ref{thm:introST-curves} is somewhat indirect, as we do not directly compare the topological spaces \( \lvert [X/\Gamma](\mathbb{R}) \rvert \) and \( \lvert \mathcal{I}_{[X/\Gamma]}(\mathbb{C}) \rvert \). Instead, we compute \( h^*(\lvert [X/\Gamma](\mathbb{R}) \rvert) \) and \( h^*(\lvert \mathcal{I}_{[X/\Gamma]}(\mathbb{C}) \rvert) \) separately by combining local and global methods; one of the main ingredients in the argument is the study of the local structure of the natural morphism $\vert [X/\Gamma](\mathbb R)\vert \rightarrow (X/\Gamma)(\mathbb R)$, which we use to analyze the global geometry of $\vert [X/\Gamma](\mathbb R)\vert$. 

\subsection{Topology of split gerbes over a real variety.} 
Finally, we go beyond the case of quotients of a real variety by a finite group scheme by considering split gerbes over a real variety. These are stacks of the form $\ca X = [U/H]$ where $U$ is a real variety and $H \to U$ a finite \'etale group scheme (acting trivially on $U$). Split gerbes over a real variety seem important in the study of the topology of real Deligne--Mumford stacks in general, and of Conjecture \ref{conj:ST} in particular, as any real Deligne--Mumford stack admits a stratification by real stacks \'etale locally of the form $[U/H]$. 

In the following theorem 
we describe the topology of the real locus of split gerbes.

\begin{theorem} \label{thm:topcovering[U/H]}
Let $U$ be a geometrically connected $\mathbb R$-variety with $U(\RR) \neq \emptyset$. 
Let $H\rightarrow U$ be a finite \'etale group scheme. Let $\sigma_U \colon U(\CC) \to U(\CC)$ and $\sigma_H \colon H(\CC) \to H(\CC)$ be the real structures of the varieties $U$ and $H$ over $\RR$. 
Let $C$ be a connected component of $U(\mathbb R)$, fix $p\in C$, and consider the action of the fundamental group $\pi_1(U(\mathbb C),p)$ on $H_p(\mathbb C)$ attached to the topological covering $H(\CC) \to U(\CC)$. Then the following holds.  
\begin{enumerate}
\item \label{itemthm:UH:2} 
    The image of the natural map $\pi_1(C,p) \to \pi_1(U(\mathbb C),p)$ lies in the subgroup of elements $g \in \pi_1(U(\mathbb C),p)$ whose action on $H_p(\CC)$ is $G$-equivariant. In particular, the group $\pi_1(C,p)$ acts naturally on $\rm  H^1(G,H_p(\CC))$.
    \item \label{itemthm:UH:1} The restriction 
    $f^{-1}(C) \to C$ of the canonical map $f\colon \vert [U/H](\mathbb R)\vert\rightarrow U(\mathbb R)$ is a topological covering with fiber $\rm  H^1(G, H_p(\CC))$, whose corresponding action of $\pi_1(C,p)$ on $\rm  H^1(G, H_p(\CC))$ coincides with the action described in item \ref{itemthm:UH:2} above. 
    \item  \label{item:3:theoremUH}
     Let $D$ be another connected component of $U(\mathbb R)$ and $q\in D$. Choose a topological path $\gamma \colon [0,1] \to U(\mathbb C)$ from $p$ to $q$, and set $\omega \coloneqq (\sigma_U \circ \gamma) \ast \gamma^{-1}\in \pi_1(U(\mathbb C),p)$, where $\ast$ denotes the composition of paths. 
     Let $\phi \colon H_p(\mathbb C)\simeq H_q(\mathbb C)$ be the isomorphism induced by $\gamma$. Then, we have $\sigma_{H}(\phi(h))=\omega \cdot \phi(\sigma_{H}(h))$ for each $h\in H_p(\mathbb C)$. 
\end{enumerate}
\end{theorem}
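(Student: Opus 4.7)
The plan is to combine the naturality of monodromy with respect to the real structures on $H$ and $U$ with a local trivialization of the gerbe $[U/H]$. Throughout, let $\rho \colon \pi_1(U(\mathbb C), p) \to \Aut(H_p(\mathbb C))$ denote the monodromy representation of the finite \'etale covering $H(\mathbb C) \to U(\mathbb C)$. The central computational ingredient is the naturality identity
\[
\sigma_H \circ \rho(\alpha) \;=\; \rho(\sigma_U \circ \alpha) \circ \sigma_H
\]
as maps $H_p(\mathbb C) \to H_{\sigma_U(p)}(\mathbb C)$, obtained by applying $\sigma_H$ to a path-lift of $\alpha$.

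For part \ref{itemthm:UH:2}, specialize this identity to a loop $\alpha \in \pi_1(C, p)$: since $C \subset U(\mathbb R)$, one has $\sigma_U \circ \alpha = \alpha$ pointwise, so $\sigma_H$ and $\rho(\alpha)$ commute as endomorphisms of $H_p(\mathbb C)$. Functoriality of non-abelian Galois cohomology then yields an action of $\pi_1(C, p)$ on $\h^1(G, H_p(\mathbb C))$.

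For part \ref{itemthm:UH:1}, I would work locally on $C$. For each $u \in C$, one can find an \'etale neighborhood $V \to U$ of $u$, in the category of real varieties with a real point above $u$, over which $H$ trivializes to the constant group scheme $(H_u)_V$, where $H_u$ is the fiber of $H$ at $u$ viewed as a finite group scheme over $\mathbb R$. Since $H$ acts trivially on $U$, one has $[U/H] \times_U V \simeq [V/(H_u)_V]$, and Theorem \ref{thm:introlemme-prefere} applied to the trivial action of $H_u$ on $V$ produces a canonical homeomorphism
\[
\left|[V/(H_u)_V](\mathbb R)\right| \;\simeq\; V(\mathbb R) \times \h^1(G, H_u(\mathbb C)).
\]
This trivializes $f$ over the image of $V(\mathbb R)$ in $C$. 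The transition data between two such charts is given by parallel transport of $H$ along a path in $C$, which by part \ref{itemthm:UH:2} is $G$-equivariant and hence induces the functorial automorphism of $\h^1(G, -)$. Patching then produces a topological covering $f^{-1}(C) \to C$ with fiber $\h^1(G, H_p(\mathbb C))$ whose monodromy at $p$ is the action from part \ref{itemthm:UH:2}.

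For part \ref{item:3:theoremUH}, apply the naturality identity to the path $\gamma$ from $p$ to $q$: pre-composing $\sigma_H \circ \rho(\gamma) = \rho(\sigma_U \circ \gamma) \circ \sigma_H$ with $\rho(\gamma)^{-1}$ gives
\[
\rho(\gamma)^{-1} \circ \sigma_H \circ \rho(\gamma) \;=\; \rho(\gamma^{-1}) \circ \rho(\sigma_U \circ \gamma) \circ \sigma_H \;=\; \rho(\omega) \circ \sigma_H
\]
as endomorphisms of $H_p(\mathbb C)$, where $\omega = (\sigma_U \circ \gamma) \ast \gamma^{-1}$. Substituting $\phi = \rho(\gamma)$ and transferring the action of $\omega$ from $H_p(\mathbb C)$ to $H_q(\mathbb C)$ via $\phi$ yields the claimed equality. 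The technical heart of the proof is part \ref{itemthm:UH:1}: one must verify that the topology on $\left|[U/H](\mathbb R)\right|$ of \cite{degaay-realmoduli} matches the covering topology produced by the local trivializations and gluing. Once the local description via Theorem \ref{thm:introlemme-prefere} is established, the $G$-equivariance obtained in part \ref{itemthm:UH:2} forces the cocycle of local trivializations to be consistent, so that the global covering structure and its monodromy are automatic.
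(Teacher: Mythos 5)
Your treatment of items \ref{itemthm:UH:2} and \ref{item:3:theoremUH} is essentially correct and morally the same as the paper's. For \ref{itemthm:UH:2}, the paper proves the commutation $\sigma_p \circ \rho_p^C(\alpha) = \rho_p^C(\alpha) \circ \sigma_p$ (Lemma \ref{lem:actionrestriction}) by working inside the semidirect product decomposition $\pi_1^{\et}(U,\overline p) \cong \pi_1^{\et}(U_\CC,\overline p) \rtimes G$ from the homotopy exact sequence; your argument via the path-lifting identity $\sigma_H \circ \rho(\alpha) = \rho(\sigma_{U\ast}\alpha) \circ \sigma_H$ and the fact that loops in $C$ are $\sigma_U$-fixed is a more elementary way of saying the same thing, and is fine. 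Your item \ref{item:3:theoremUH} is exactly the content of Proposition \ref{prop:changeofbasepoint}, with the same lifting-of-paths computation; your remark that one must "transfer the action of $\omega$ from $H_p(\CC)$ to $H_q(\CC)$ via $\phi$" is the right reading of the (slightly abusive) formula $\sigma_H(\phi(h)) = \omega\cdot\phi(\sigma_H(h))$.

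For item \ref{itemthm:UH:1} you take a genuinely different route from the paper, and there is a real gap in it. The paper does not reprove that $f^{-1}(C)\to C$ is a covering: it cites \cite[Theorem 1.5]{AdGF-grup} for the covering statement (with fiber $\rm H^1(G,H_p(\CC))$ by Proposition \ref{prop:fiber-H1}), and then identifies the monodromy via the explicit cocycle model $f^{-1}(C) \simeq Z^1_C/\!\sim$ from \cite[Theorem 5.9]{AdGF-grup}, by comparing $Z^1_C$ with the pullback of the covering $H(\CC) \to U(\CC)$ to $C$. Your proposal instead builds the covering from scratch by local trivialization: you trivialize $H$ over an \'etale neighbourhood $V$ of a real point and apply Theorem \ref{thm:introlemme-prefere} to the trivial action of $H_u$ on $V$ to get $\va{[V/H_u](\RR)} \simeq V(\RR) \times \rm H^1(G,H_u(\CC))$, then glue. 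This is a reasonable and more self-contained plan, but the key step is missing: you need to know that the square
\[
\begin{array}{ccc}
\va{[V/H_u](\RR)} & \longrightarrow & \va{[U/H](\RR)}\\
\downarrow & & \downarrow\\
V(\RR) & \longrightarrow & U(\RR)
\end{array}
\]
is cartesian and that the top map is a local homeomorphism (equivalently, an open embedding after shrinking $V$), so that the product description of $\va{[V/H_u](\RR)}$ really trivializes $f$ over the image of $V(\RR)$ in $C$. This is exactly the comparison of topologies you flag as "the technical heart of the proof," and your closing sentence --- that the $G$-equivariance from item \ref{itemthm:UH:2} "forces the cocycle of local trivializations to be consistent" --- does not address it: the $G$-equivariance gives you a well-defined action of $\pi_1(C,p)$ on $\rm H^1(G,H_p(\CC))$ and ensures the transition maps descend, but it says nothing about whether the stack-theoretic topology on $\va{[U/H](\RR)}$ agrees with the glued covering topology. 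The missing ingredient is the statement (used implicitly in the paper's proof of Corollary \ref{cor:coarse-map-closed} via \cite[Lemma 5.10]{AdGF-grup}) that an \'etale morphism of Deligne--Mumford stacks over $\RR$ induces a local homeomorphism on real loci; you would need to invoke or prove some version of that to close the argument.
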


Item \ref{item:3:theoremUH} of Theorem \ref{thm:topcovering[U/H]} implies that, once the actions of \( G \) on $U(\CC)$ and on the fiber \( H_p(\mathbb{C}) \) above one point \( p \in U(\mathbb{R}) \) are known, one can determine the action of \( G \) on the fiber \( H_q(\mathbb{C}) \) above any other point \( q \in U(\mathbb{R}) \), by looking at the way in which \( G \) acts on a topological path connecting \( p \) and \( q \) in \( U(\mathbb{C}) \). A more general version of this statement, which may be of independent interest, is given in Proposition \ref{prop:changeofbasepoint}. A sample application of this result is given in Example \ref{ex:prop-apply-elliptic}, where we consider the action of $G$ on the cohomology of the fibers of a family of real elliptic curves.  

As an application of Theorem \ref{thm:topcovering[U/H]}, we establish the Smith--Thom inequality for a certain class of split gerbes over a real curve.
\begin{corollary}\label{cor:gerbesovercurves}
    Let $U$ be smooth curve over $\RR$ and $n \in \ZZ_{\geq 0}$. Let $H \to U$ be a finite \'etale abelian group scheme such that $H_x(\CC)[2] = H_x(\CC)[4]$ for $x \in U(\CC)$.  
    Then Conjecture \ref{conj:ST} holds for the stack $[U/H]$. 
\end{corollary}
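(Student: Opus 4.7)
The plan is to bound $h^*(\vert [U/H](\RR)\vert)$ by $h^*(H(\RR))$ and then apply the classical Smith--Thom inequality \eqref{align:inequality-ST} to the smooth real variety $H$ (which is smooth because it is étale over the smooth curve $U$). First I identify the inertia: since $H$ has abelian geometric fibers, the conjugation action of $H$ on itself is trivial, so $\ca I_{[U/H]} = [H/H]$ is the trivial gerbe over $H$, yielding a canonical homeomorphism $\vert \ca I_{[U/H]}(\CC)\vert \simeq H(\CC)$. Combined with $h^*(H(\RR)) \leq h^*(H(\CC))$, it therefore suffices to prove $h^*(\vert [U/H](\RR)\vert) \leq h^*(H(\RR))$. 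Passing to connected components, and noting that $\vert [U/H](\RR)\vert = \emptyset$ whenever $U(\RR) = \emptyset$ (any $H$-equivariant map from an $H$-torsor $P$ over $\RR$ to $U$ factors through $P/H = \Spec \RR$), I may assume that $U$ is geometrically connected with $U(\RR) \neq \emptyset$, so that Theorem \ref{thm:topcovering[U/H]} applies.

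I then treat each connected component $C$ of $U(\RR)$ separately. Since $U$ is a smooth curve, $C$ is homeomorphic to $S^1$ or to $\RR$. By item \ref{itemthm:UH:1} of Theorem \ref{thm:topcovering[U/H]}, the restriction $f^{-1}(C) \to C$ of the canonical map $f \colon \vert [U/H](\RR)\vert \to U(\RR)$ is a topological covering with fiber $\rm H^1(G, H_p(\CC))$ for any $p \in C$; similarly, since $H \to U$ is étale, $H(\RR)|_C \to C$ is a topological covering with fiber $H_p(\RR)$. The crucial observation is that, because $H_p(\CC) \simeq (\ZZ/2)^n$ has exponent $2$, every element $h \in H_p(\RR) = H_p(\CC)^\sigma$ satisfies $h + \sigma(h) = 2h = 0$, and hence is a $1$-cocycle for $G = \Gal(\CC/\RR)$. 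Consequently $\rm H^1(G, H_p(\CC)) = H_p(\RR)/(1+\sigma)H_p(\CC)$, and the natural quotient $H_p(\RR) \twoheadrightarrow \rm H^1(G, H_p(\CC))$ is surjective. By item \ref{itemthm:UH:2} of Theorem \ref{thm:topcovering[U/H]}, the action of $\pi_1(C)$ on $H_p(\CC)$ commutes with $\sigma$, so this surjection is $\pi_1(C)$-equivariant and extends to a continuous surjective map of coverings $H(\RR)|_C \twoheadrightarrow f^{-1}(C)$ over $C$.

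A surjective map of coverings over $C$ can only decrease the number of $\pi_1(C)$-orbits on the fiber, and these orbits count the connected components of the total space. When $C \simeq S^1$ each component is itself a circle (with $h^* = 2$), so $h^*(f^{-1}(C)) \leq h^*(H(\RR)|_C)$; when $C \simeq \RR$ each component is contractible and $h^*$ equals the number of components, which is bounded above by $|H_p(\RR)|$. In either case $h^*(f^{-1}(C)) \leq h^*(H(\RR)|_C)$. Summing over components of $U(\RR)$ yields $h^*(\vert [U/H](\RR)\vert) \leq h^*(H(\RR))$, and chaining with Smith--Thom for $H$ completes the proof.

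The main obstacle is the construction in the second paragraph of the surjection of coverings $H(\RR)|_C \twoheadrightarrow f^{-1}(C)$. This rests on two ingredients specific to the hypotheses of the corollary: the exponent-$2$ assumption on the fibers, which forces the $1$-cocycles to coincide with the $\sigma$-fixed points and thereby links the stacky invariant $\rm H^1(G, H_p(\CC))$ to the classical real variety $H(\RR)$; and the $G$-equivariance of the $\pi_1(C)$-action supplied by item \ref{itemthm:UH:2} of Theorem \ref{thm:topcovering[U/H]}, which ensures that the fiberwise quotient respects the monodromy. Without the exponent-$2$ assumption the cocycles in $H_p(\CC)$ would form a strictly smaller subset of $H_p(\RR)$, and no such direct bridge to a classical real variety would be available.
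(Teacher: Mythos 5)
Your proposal is correct and follows essentially the same strategy as the paper: identify $\va{\ca I_{[U/H]}(\CC)}\simeq H(\CC)$, reduce to showing $h^*(\va{[U/H](\RR)})\leq h^*(H(\RR))$ via classical Smith--Thom for the real variety $H$, and exploit the exponent-$2$ hypothesis to obtain a $\pi_1(C,p)$-equivariant surjection $H_p(\RR)=A^\sigma\twoheadrightarrow A^\sigma/(1+\sigma)A=\rm H^1(G,H_p(\CC))$. The only difference is in the last step: the paper dualizes this to a monomorphism of locally constant sheaves $f_*\ZZ/2\hookrightarrow g_*\ZZ/2$ on $U(\RR)$ and then bounds $\dim\rm H^0$ and $\dim\rm H^0_c$, using $h^*(X)=\dim\rm H^0(X)+\dim\rm H^0_c(X)$ for a disjoint union of circles and open intervals, whereas you work directly with the surjection of coverings $H(\RR)|_C\twoheadrightarrow f^{-1}(C)$ and count connected components case by case ($C\simeq\mathbb S^1$ or $C\simeq\RR$); these two formulations are equivalent, and your component count is a clean way to package the same computation.
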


Moreover, in the proper case, Theorem \ref{thm:topcovering[U/H]} yields the following concrete interpretation of Conjecture \ref{conj:ST}. 

\begin{corollary}\label{cor:gerbesovercurvesconcrete}
 Let $U$ be smooth proper curve of genus $g$ over $\RR$. Fix $p\in U(\mathbb R)$. Let $H \to U$ be a finite \'etale group scheme, and $C_1, \dotsc, C_m$ the connected components of $U(\RR)$. For each $i$, choose a point $p_i \in C$. Consider the natural action of $\pi_1(C_i,p_i)$ on $\rm H^1(G, H_{p_i}(\CC))$, see Theorem \ref{thm:topcovering[U/H]}. Then Conjecture \ref{conj:ST} holds for $[U/H]$ if and only if 
       \begin{align} \label{align:inequality:H/U}
        \sum_{i = 1}^m\# \left(\frac{\rm H^1(G,H_{p_i}(\mathbb C))}{\pi_1(C_i,p_i)} \right) \leq 2 \cdot \# \left(\frac{H_{p}(\mathbb C)}{\pi_1(U(\mathbb C),p)}\right)+ (g-1) \cdot \#H_p(\CC).
    \end{align}
\end{corollary}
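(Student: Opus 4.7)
My strategy is to compute both sides of the Smith--Thom inequality $h^*(\va{\ca X(\RR)}) \leq h^*(\va{\ca I_{\ca X}(\CC)})$ of Conjecture \ref{conj:ST} for $\ca X = [U/H]$ as explicit topological invariants, and then to check that the resulting comparison, after dividing by $2$, is exactly the inequality (\ref{align:inequality:H/U}).

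\textbf{Real side.} Since $U$ is smooth and proper over $\RR$, each connected component $C_i$ of $U(\RR)$ is diffeomorphic to $S^1$. By Theorem \ref{thm:topcovering[U/H]}, the canonical map $f \colon \va{[U/H](\RR)} \to U(\RR)$ restricts over each $C_i$ to a topological covering with fiber $\rm H^1(G, H_{p_i}(\CC))$ and with the monodromy described in the theorem. Any topological covering of $S^1$ is a disjoint union of circles, one for each monodromy orbit, and each such circle contributes $h^*(S^1) = 2$. I conclude
\[
h^*\!\left( \va{[U/H](\RR)} \right) \;=\; 2 \sum_{i=1}^m \# \left( \frac{\rm H^1(G, H_{p_i}(\CC))}{\pi_1(C_i, p_i)} \right).
\]

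\textbf{Complex inertia.} Because the $H$-action on $U$ is trivial, every element of $H_u(\CC)$ stabilizes its underlying point $u$, and the inertia scheme of $[U/H]$ is canonically isomorphic to the group scheme $H \to U$ itself; consequently $\va{\ca I_{[U/H]}(\CC)}$ is homeomorphic, over $U(\CC)$, to $H(\CC)$. The map $H(\CC) \to U(\CC)$ is then a finite \'etale cover of degree $d := \# H_p(\CC)$ with exactly $c := \# (H_p(\CC)/\pi_1(U(\CC), p))$ connected components. For each component $Y$ of degree $d_Y$ over $U(\CC)$, the Riemann--Hurwitz formula applied to the unramified cover $Y \to U(\CC)$ yields $g_Y = 1 + d_Y(g-1)$, hence $h^*(Y) = 2 + 2 g_Y = 4 + 2 d_Y(g-1)$. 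Summing over the $c$ components and using $\sum_Y d_Y = d$ gives
\[
h^*\!\left( \va{\ca I_{[U/H]}(\CC)} \right) \;=\; 4 c + 2(g-1)\, d.
\]

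\textbf{Comparison and main obstacle.} Substituting the two displayed formulas into Conjecture \ref{conj:ST} and dividing by $2$ yields precisely (\ref{align:inequality:H/U}), establishing the ``if and only if''. The delicate step is the identification $\va{\ca I_{[U/H]}(\CC)} \cong H(\CC)$, which relies on the triviality of the $H$-action on $U$ to collapse the inertia onto the group scheme $H$ itself; once this is granted, the rest of the argument is a routine application of covering theory over $S^1$ and the Riemann--Hurwitz formula for unramified covers of $U(\CC)$.
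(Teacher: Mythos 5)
Your argument is essentially identical to the paper's own proof: compute the left-hand side via Theorem \ref{thm:topcovering[U/H]} (every component of $U(\RR)$ is a circle, so $\va{[U/H](\RR)}\to U(\RR)$ is a covering by circles, one per monodromy orbit on $\rm H^1(G,H_{p_i}(\CC))$); compute the right-hand side by identifying $\va{\ca I_{[U/H]}(\CC)}$ with a finite \'etale cover of $U(\CC)$ and applying Riemann--Hurwitz componentwise; then compare. The arithmetic on both sides matches the paper's.

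One point to be careful about, which is in fact glossed over the same way in the paper's own proof: you assert that ``the inertia scheme of $[U/H]$ is canonically isomorphic to the group scheme $H \to U$ itself; consequently $\va{\ca I_{[U/H]}(\CC)}$ is homeomorphic to $H(\CC)$.'' This is not quite right in general. Since the $H$-action on $U$ is trivial, the inertia stack is $\ca I_{[U/H]} = [H/H]$ where $H$ acts on itself fiberwise by conjugation, so $\va{\ca I_{[U/H]}(\CC)}$ is the space $H(\CC)/H(\CC)$ of fiberwise conjugacy classes (this is exactly Lemma \ref{lemma:inertia:H/U}), and it equals $H(\CC)$ only when the fibers $H_p(\CC)$ are abelian. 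For non-abelian $H$, the quantity that should appear on the right of \eqref{align:inequality:H/U} is therefore governed by $\#\bigl(H_p(\CC)/H_p(\CC)\bigr)$ rather than $\#H_p(\CC)$. The paper's own proof of the corollary writes ``$\va{\ca I_{[U/H]}(\CC)} = H(\CC)$'' as well, so this is not a gap you introduced; but strictly speaking your derivation of the inertia as $H(\CC)$ requires the abelianness (or else a replacement of $H_p(\CC)$ by its set of conjugacy classes throughout the right-hand side).
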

By Corollary \ref{cor:gerbesovercurves}, inequality \eqref{align:inequality:H/U} holds when the fibers of $H \to U$ are of the form $(\ZZ/2)^n$ for some $n \geq 0$. We do not know a direct proof of this inequality without passing through the geometry of $[U/H]$, i.e., without using Corollary \ref{cor:gerbesovercurves}. 

Finally, in Section \ref{sec:splitting}, we reformulate the abstract statement of Theorem \ref{thm:topcovering[U/H]} in more concrete group-theoretic terms; see Proposition \ref{prop:splitting}. This  allows one to compute the topological covering appearing in Theorem \ref{thm:topcovering[U/H]} in concrete examples. To illustrate the techniques, we compute the topology of various gerbes over an Enriques surface, and prove the Smith–Thom inequality \eqref{align:inequality-ST-stacks} for such gerbes. 
\subsection{Organization of the paper.}
This paper is organized as follows. In Section \ref{sec:notation}, we fix the conventions and notation used throughout the paper. In Section \ref{sec:topologyofarealquotientstack}, we prove a formula for the real locus of a quotient stack, which we then apply to verify Conjecture \ref{conj:ST} in numerous examples. In Section \ref{sec:STforcurves}, we prove the conjecture for a broad class of real stacky curves. Section \ref{sec:classifyingstack} focuses on the topology of split gerbes, and we use this analysis to establish the conjecture for various split gerbes over a real curve. Finally, in Section \ref{sec:splitting}, we reformulate the results of Section \ref{sec:classifyingstack} in group-theoretic terms, and then use this to prove Conjecture \ref{conj:ST} for several split gerbes over an Enriques surface.

\subsection{Acknowledgements.} We thank Olivier Benoist, Ilia Itenberg, Matilde Manzaroli, Ieke Moerdijk and Florent Schaffhauser for helpful discussions. Special thanks to Matilde Manzaroli for helping us with the pictures in the paper. This research was partly supported by the grant ANR–23–CE40–0011 of Agence National de la Recherche.
The second author has received funding 
 from the European Research Council (ERC) under the European Union’s Horizon 2020 research and innovation programme under grant agreement N\textsuperscript{\underline{o}}948066 (ERC-StG RationAlgic), and from the ERC Consolidator Grant FourSurf N\textsuperscript{\underline{o}}101087365.

\section{Notation and preliminaires} \label{sec:notation}
\subsection{Sets and topological spaces.} If $X$ is a finite set, we write $\#X$ for the cardinality of $X$. 
All the topological spaces in this paper are assumed to be locally compact and Hausdorff.  If $X$ is a topological space such that $\dim \rm H^\ast(X,\ZZ/2)$ is finite, we define $h^\ast(X) = \dim \rm H^\ast(X,\ZZ/2)$. 

A \emph{circle} is a topological space homeomorphic to $\mathbb S^1 \coloneqq \set{z \in \CC \mid \va{z} =1}$. An \emph{interval} is a topological space homeomorphic to one of intervals $(0,1)$, $[0,1)$ or $[0,1]$ inside $\mathbb R$. If the topological space is homeomorphic to $(0,1)$ we call it \emph{open interval}. 
\subsection{Groups.} Throughout the paper, $G$ will be the finite group
$
G = \Gal(\CC/\RR) \simeq \ZZ/2,
$ and $\sigma \in G$ denotes a generator. Let $\Gamma$ be a group on which $G$-acts. Recall (see e.g. \cite[Chapitre I, \S 5]{serre-galoisienne}) that the non-abelian Galois cohomology group 
$\rm  H^1(G, \Gamma)$ can be canonically identified with $\rm Z^1(G,\Gamma)/\sim $ where $\rm Z^1(G,\Gamma)\subset \Gamma$ is the set of $\gamma\in \Gamma$ such that $\gamma\sigma(\gamma)=e$ and $\sim$ is the equivalence relation $\gamma \sim \beta \gamma \sigma(\beta)^{-1}$ for $\beta \in \Gamma(\CC)$. 

If $X$ is a set on which a group $H$ acts, we write $X/H$ for the set of orbits. Unless stated otherwise, we consider $H$ acting on itself by conjugation, so that $H/H$ is the set of conjugacy classes of $H$. 

\subsection{Real varieties.} A \emph{variety} over $\RR$ (resp.\ $\CC$) will be a reduced and separated scheme of finite type over $\RR$ (resp.\ $\CC$).  A variety over $\RR$ will also be called a \emph{real variety}.  If $X$ is a real variety, we denote by  $\sigma_X\colon X(\mathbb C)\rightarrow X(\mathbb C)$  the induced anti-holomorphic involution, which we also view as an action of $G$ on $X(\CC)$.

A \emph{curve} over $\RR$ (resp.\ $\CC$) is a variety of dimension one over $\RR$ (resp.\ $\CC$). Note that we do not assume that $X$ is proper. For a smooth curve $X$ over $\RR$, any connected component $C \subset X(\RR)$ is homeomorphic to either the circle or the open interval. 

    For $n \in \ZZ_{\geq 1}$, we let $\mu_n$ be the $\RR$-group scheme with $\mu_n(S) = \set{x \in \OO_S(S) \mid x^n = 1}$ for a scheme $S$ over $\RR$. 
    
\subsection{Algebraic stacks.}\label{sec:notationstack}
We indicate an algebraic stack by a calligraphic letter, such as $\ca X, \ca Y, \ca Z$. Schemes are usually indicated by roman capitals, such as $X,Y,Z$. For an algebraic stack $\ca X$, we let $\ca I_{\ca X}\to \ca X$ denote the inertia stack over $\ca X$. 


When $\ca X$ is an algebraic stack over a scheme $S$, we let $\va{\ca X(S)}$ denote the set of isomorphism classes of the groupoid $\ca X(S)$. 
For an algebraic stack $\ca X$ over $\RR$, and an object $x \in \ca X(\RR)$, let $x_\CC \in \ca X(\CC)$ denote the pull-back of $x$ along $\Spec(\CC) \to \Spec(\RR)$. 

Let $\mathcal X$ be a Deligne-Mumford stack over $\mathbb R$. Recall that the sets $\vert \mathcal X(\mathbb R)\vert$ and $\vert \mathcal X(\mathbb C)\vert$ are endowed with a natural topology. To define it, one chooses a scheme $U$ and an \'etale morphism $U\rightarrow \mathcal X$ such that 
$U(\mathbb R)\rightarrow \vert \mathcal X(\mathbb R)\vert$ and $U(\mathbb C)\rightarrow \vert \mathcal X(\mathbb C)\vert $ are surjective. Then the topologies on $\vert \mathcal X(\mathbb R)\vert $ and $\vert \mathcal X(\mathbb C)\vert $ are the induced quotient topologies. See \cite[Section 2.3]{thesis-degaayfortman} for more details. 
\subsection{Preliminaries from \cite{AdGF-grup}.} For the convenience of the reader, we recall the following results from \cite{AdGF-grup}, that will be used several times in the sequel. 
\begin{proposition} \label{prop:fiber-H1}\cite[Proposition 5.5]{AdGF-grup}. Let $\ca X$ be a separated Deligne--Mumford stack of finite type over $\RR$, with coarse moduli space $p \colon \ca X \to M$.
    \begin{enumerate}
        \item Let $f \colon \vXR \to M(\RR)$ denote the map induced by $p$. Let $x \in \ca X(\RR)$ with isomorphism class $[x] \in \va{\ca X(\RR)}$. There is a canonical bijection $f^{-1}(f([x])) = \rm  H^1(G, \Aut(x_\CC))$. 
        \item We have $\# \rm  H^1(G, \Aut(x_\CC)) = \# \rm  H^1(G, \Aut(x'_\CC))$ for each pair of objects $x, x' \in \ca X(\RR)$ whose induced objects $x_\CC, x'_\CC \in \ca X(\CC)$ are isomorphic in $\ca X(\CC)$. 
    \end{enumerate}    

\end{proposition}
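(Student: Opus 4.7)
The plan is to deduce part (2) from part (1) and to prove part (1) by interpreting a real object as a complex object equipped with a Galois descent datum, so that the fiber of $f$ over $f([x])$ parametrizes twists of the descent datum on $x_\CC$ -- which is precisely what $\rm H^1(G, \Aut(x_\CC))$ classifies.

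For part (2), if $x_\CC \cong x'_\CC$ in $\ca X(\CC)$, then the images of $[x]$ and $[x']$ in $M(\CC)$ coincide, since the coarse moduli map identifies $\CC$-points precisely when they are $\ca X(\CC)$-isomorphic. Since $M$ is a scheme, $M(\RR) \hookrightarrow M(\CC)$ is injective, so $f([x]) = f([x'])$ in $M(\RR)$. By part (1), both $\rm H^1(G, \Aut(x_\CC))$ and $\rm H^1(G, \Aut(x'_\CC))$ are then in bijection with the same fiber $f^{-1}(f([x])) = f^{-1}(f([x']))$, so their cardinalities agree.

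For part (1), the first step is to identify the fiber concretely: a class $[y] \in \vXR$ belongs to $f^{-1}(f([x]))$ if and only if $y_\CC \cong x_\CC$ in $\ca X(\CC)$, by the same reasoning as above. Next, I would invoke the Galois-descent dictionary for Deligne--Mumford stacks: an object of $\ca X(\RR)$ is equivalent to a pair $(y_\CC, \alpha_y)$ consisting of $y_\CC \in \ca X(\CC)$ and a descent datum $\alpha_y \colon y_\CC \to \sigma^\ast y_\CC$ satisfying the cocycle identity $\sigma^\ast(\alpha_y) \circ \alpha_y = \id_{y_\CC}$ (after identifying $\sigma^\ast \sigma^\ast y_\CC$ with $y_\CC$ via the canonical $2$-isomorphism). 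In particular, $\sigma^\ast(\alpha_x) = \alpha_x^{-1}$, and the $G$-action on $\Aut(x_\CC)$ is given by $\beta \mapsto \alpha_x^{-1} \circ \sigma^\ast(\beta) \circ \alpha_x$.

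To construct the bijection, given $[y]$ in the fiber I would choose an isomorphism $\phi \colon y_\CC \to x_\CC$ in $\ca X(\CC)$ and transport $\alpha_y$ along $\phi$ to obtain a second descent datum $\alpha'_x \coloneqq \sigma^\ast(\phi) \circ \alpha_y \circ \phi^{-1}$ on $x_\CC$. Setting $c_y \coloneqq \alpha_x^{-1} \circ \alpha'_x \in \Aut(x_\CC)$, a direct computation using the two cocycle identities for $\alpha_x$ and $\alpha'_x$ shows that $c_y \cdot \sigma(c_y) = \id$, so $c_y \in \rm Z^1(G, \Aut(x_\CC))$. Replacing $\phi$ by $\phi \circ \beta$ with $\beta \in \Aut(y_\CC)$ changes $c_y$ by a coboundary, and replacing $y$ by an $\RR$-isomorphic object leaves $[c_y]$ invariant, so we obtain a well-defined class in $\rm H^1(G, \Aut(x_\CC))$. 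The inverse map sends a cocycle $c$ to the real object with underlying complex object $x_\CC$ and descent datum $\alpha_x \circ c$; it lands in the fiber by construction, and the two maps are mutually inverse by direct verification.

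The main obstacle I expect is the $2$-categorical bookkeeping. Because $\ca X$ is a stack rather than a set-valued functor, equalities of $1$-morphisms such as $\sigma^\ast(\alpha_x) \circ \alpha_x = \id$ are really equalities only after inserting a coherent $2$-isomorphism $\sigma^\ast \sigma^\ast \simeq \id$, and one must verify that the cocycle class $[c_y]$ is independent of all the auxiliary choices -- the representative of $[y]$, the isomorphism $\phi$, the coherence data for the $G$-action on $\ca X(\CC)$, and the chosen descent data $\alpha_x, \alpha_y$. Once these compatibilities are established, both halves of the argument reduce to routine calculations.
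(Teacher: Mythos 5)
Your proposal is correct, and since the paper only cites this result from \cite[Proposition 5.5]{AdGF-grup} rather than reproving it, there is no in-text proof to compare against; nevertheless, the Galois-descent/twisting argument you outline is the standard one and matches the explicit cocycle methodology the authors use elsewhere (e.g.\ the twist formalism in the proof of Theorem \ref{thm:introlemme-prefere} and the identification $f^{-1}(C)\simeq Z^1_C/_\sim$ quoted from \cite[Theorem 5.9]{AdGF-grup}). Your key steps all check out: $\lvert\ca X(\CC)\rvert\to M(\CC)$ is a bijection for separated DM stacks of finite type, so the fiber of $f$ over $f([x])$ consists exactly of classes $[y]$ with $y_\CC\cong x_\CC$; the descent datum $\alpha'_x=\sigma^\ast(\phi)\circ\alpha_y\circ\phi^{-1}$ and the cocycle $c_y=\alpha_x^{-1}\circ\alpha'_x$ are correctly formed; the cocycle identity $c_y\sigma(c_y)=\id$ follows from $\sigma^\ast(\alpha_x)=\alpha_x^{-1}$ and $\sigma^\ast(\alpha'_x)=(\alpha'_x)^{-1}$; and replacing $\phi$ by $\phi\circ\beta$ changes $c_y$ by $\gamma\mapsto\sigma(\gamma)c_y\gamma^{-1}$ with $\gamma=\phi\beta\phi^{-1}$, which is a coboundary in the convention of the paper. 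Part (2) then follows formally from Part (1) as you say.
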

\begin{proposition}\label{prop:inertia-description}\cite[Proposition 5.4]{AdGF-grup}. Let $S$ be a complex variety. Let $f\colon X \to S$ be a scheme of finite type over $S$. Let $H \to S$ be a finite group scheme over $S$, acting  on $X$ over $S$. Then
    \begin{align} 
\va{\ca I_{[X/H]}(\CC)}= \set{(x,\gamma) \in X(\CC) \times H(\CC) \mid \gamma \in \Stab_{H_{f(x)}(\CC)}(x)} /_{\sim}
\end{align}
where $(x,\gamma) \sim (gx, g\gamma g^{-1})$ for $g \in H_{f(x)}(\CC)$. 
\end{proposition}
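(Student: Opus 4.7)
\textbf{Proof plan for Proposition \ref{prop:inertia-description}.}

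The plan is to unpack both sides of the claimed equality using the torsor description of the quotient stack and the definition of the inertia. First I would describe $\mathbb{C}$-points of $[X/H]$: a point of $[X/H](\CC)$ is a pair $(P,u)$ where $P$ is an $H$-torsor over $\Spec(\CC)$ (necessarily with a prescribed image $s \in S(\CC)$, so that it is an $H_s$-torsor) and $u \colon P \to X_s$ is an $H_s$-equivariant map of schemes over $\CC$. Since $H_s$ is a finite constant group over $\CC$, any such $P$ is trivial. Choosing a trivialization $p \in P(\CC)$ identifies $P$ with $H_s$ and determines $u$ by the single value $x \coloneqq u(p) \in X_s(\CC)$. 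Changing the trivialization $p$ by an element $g \in H_s(\CC)$ replaces $x$ by $g \cdot x$, which recovers the usual description $\va{[X/H](\CC)} = \coprod_{s \in S(\CC)} X_s(\CC)/H_s(\CC)$.

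Next I would compute $\Aut_{[X/H](\CC)}(P,u)$. An automorphism is an $H_s$-equivariant self-map $\phi \colon P \to P$ with $u \circ \phi = u$. Trivializing by $p$ as above, $H_s$-equivariance forces $\phi$ to be right multiplication by the element $\gamma \coloneqq \phi(p) \cdot p^{-1} \in H_s(\CC)$, and the condition $u(\phi(p)) = u(p)$ reads $\gamma \cdot x = x$, i.e.\ $\gamma \in \Stab_{H_s(\CC)}(x)$. This gives the set of pairs $(x,\gamma)$ in the statement.

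Finally I would compute the equivalence relation on these pairs induced by isomorphisms of objects of $\ca I_{[X/H]}(\CC)$. Given two pairs $(x,\gamma)$ and $(x',\gamma')$ attached, via trivializations $p$ and $p'$, to the same object of $[X/H](\CC)$, there is $g \in H_s(\CC)$ with $x' = g \cdot x$; the induced isomorphism $\psi \colon (P,u) \to (P',u')$ sends $p \mapsto p' \cdot g^{-1}$, and transporting the automorphism $\gamma$ of $(P,u)$ through $\psi$ yields the automorphism $\psi \circ \gamma \circ \psi^{-1}$ of $(P',u')$, which is right multiplication by $g\gamma g^{-1}$. Hence $(x,\gamma) \sim (gx,\, g\gamma g^{-1})$, exactly the relation in the proposition.

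The main technical point is keeping track of the signs of conjugation and the direction of the right action when changing trivialization; everything else is a direct unpacking of definitions. Since $H$ is finite and acts on $X$ over $S$, the fibered product structure guarantees that $\gamma \in H_{f(x)}(\CC)$ in the displayed formula, so no issues arise with the parameter $s \in S(\CC)$ varying.
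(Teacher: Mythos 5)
The paper does not itself prove Proposition \ref{prop:inertia-description}: it is imported verbatim from \cite[Proposition 5.4]{AdGF-grup} (as announced in the preliminaries, Section 2.5), so there is no in-paper argument against which to compare. That said, your sketch is the natural and correct one: describe $[X/H](\CC)$ via torsors, note that every $H_s$-torsor over $\Spec(\CC)$ is trivial (since $H_s$ is a finite group scheme over the algebraically closed field $\CC$ of characteristic zero, hence constant), compute $\Aut_{[X/H](\CC)}(P,u) \cong \Stab_{H_s(\CC)}(x)$ after trivializing, and track how the pair $(x,\gamma)$ transforms under a change of trivialization by $g \in H_s(\CC)$, which yields precisely the relation $(x,\gamma)\sim(gx,\, g\gamma g^{-1})$. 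The left/right conventions you flag are indeed the only place one needs to be careful, and your bookkeeping is consistent (your shorthand $\phi(p)\cdot p^{-1}$ should be read as the unique $\gamma\in H_s(\CC)$ with $\phi(p)=\gamma\, p$, since $p$ is not a group element).

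One point you should add to make the proof complete: as the proposition is used in the paper (e.g.\ Example \ref{ex:A1modZ2} and Proposition \ref{prop:inertia-curves-description}), the identification is meant as a homeomorphism, not merely a bijection of sets, since both sides carry quotient topologies. This is most cleanly handled by observing, as in the proof of Proposition \ref{prop:inertia-curves-description}, that the stabilizer scheme $\Sigma=\set{(x,\gamma)\in X\times_S H \mid \gamma\cdot x=x}$ furnishes an isomorphism $\ca I_{[X/H]}\cong[\Sigma/H]$; then the \'etale atlas $\Sigma\to\ca I_{[X/H]}$ shows that the quotient topology on $\va{\ca I_{[X/H]}(\CC)}$ coincides with the quotient topology on $\Sigma(\CC)/H(\CC)$, which is exactly the right-hand side of the displayed formula and recovers your bijection.
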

\section{Topology of a real quotient stack}\label{sec:topologyofarealquotientstack}
In this section, we describe the topology of the real locus of the quotient stack $[X/\Gamma]$ of a real variety $X$ on which a finite $\mathbb R$-group scheme $\Gamma$ acts, and prove Theorem \ref{thm:introlemme-prefere}. We then use this description to verify the Smith--Thom inequality \eqref{align:inequality-ST-stacks} in many examples. Finally, we use Theorem \ref{thm:introlemme-prefere} to study the topology of the coarse moduli space map $\ca X \to M$ of a separated Deligne--Mumford stack $\ca X$ over $\RR$, see Theorem \ref{thm:coarse-map-closed}.. 

\subsection{The real locus of a quotient stack.}\label{sec:real-locus-quotient} The goal of Section \ref{sec:real-locus-quotient} is to prove Theorem \ref{thm:introlemme-prefere}. 
\subsubsection{Group schemes over the reals and torsors.}\label{}Let $\Gamma$ be a finite group scheme over $\RR$ and let $\sigma_{\Gamma} \colon \Gamma(\CC)\rightarrow \Gamma(\CC)$
be the action of $G$ on $\Gamma(\CC)$ corresponding to $\Gamma$. 

Choose a complete set of representative $H\subset \rm Z^1(G, \Gamma) $ for $H^1(G,\Gamma)$, so that the composition $H \subset \rm Z^1(G,\Gamma) \to \rm  H^1(G,\Gamma)$ is a bijection; we choose $H$ such that $e \in H$. For each $\gamma\in H$, we define an involution
\[
\varphi^{\gamma} \colon \Gamma(\CC)\rightarrow \Gamma(\CC) \quad \quad \text{as} \quad \quad \varphi^{\gamma}(g)= \sigma(g) \cdot \gamma^{-1}.
\]
We consider the resulting $G$-set $(\Gamma(\CC), \varphi^\gamma)$. Note that left multiplication defines an action of the $G$-group $(\Gamma(\CC), \sigma_\Gamma)$ on the $G$-set $(\Gamma(\CC), \varphi^\gamma)$. In particular, if $P_\gamma$ is the $\RR$-scheme associated to $(\Gamma(\CC), \varphi^\gamma)$, 
we get an action of the $\RR$-group scheme $\Gamma$ on the $\RR$-scheme $P_\gamma$ that turns the latter into a $\Gamma$-torsor. 
Recall the following classical result. 
\begin{lemma}\label{lem:bijectiontorsor}
The following map is bijective:
	\begin{align*} \rm  H^1(G, \Gamma) = H &\rightarrow \{\text{isomorphism classes of } \Gamma \text{-torsors over } \Spec(\mathbb R)\}, \\
    \gamma&\mapsto P_\gamma.
    \end{align*}
\end{lemma}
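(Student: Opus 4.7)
The plan is to prove this by making explicit the classical dictionary between $\Gamma$-torsors over $\RR$ and $Z^1(G,\Gamma(\CC))$ modulo coboundaries, through the construction $\gamma \mapsto P_\gamma$. Since $H$ is by hypothesis a complete set of representatives for $\rm H^1(G,\Gamma(\CC))$, it suffices to exhibit a bijection between cohomology classes and isomorphism classes of $\Gamma$-torsors. I will break the argument into three steps: (a) $P_\gamma$ is a well-defined $\Gamma$-torsor; (b) the isomorphism class of $P_\gamma$ depends only on the cohomology class of $\gamma$ and different classes give non-isomorphic torsors; (c) every $\Gamma$-torsor arises this way.

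For (a), I would verify directly that $\varphi^\gamma$ is an involution: the computation $\varphi^\gamma(\varphi^\gamma(g))=\sigma_\Gamma(\sigma_\Gamma(g)\gamma^{-1})\gamma^{-1}=g\,\sigma_\Gamma(\gamma)^{-1}\gamma^{-1}$ equals $g$ precisely because $\gamma\,\sigma_\Gamma(\gamma)=e$. The identity $\varphi^\gamma(hg)=\sigma_\Gamma(h)\varphi^\gamma(g)$ shows that left multiplication by $(\Gamma(\CC),\sigma_\Gamma)$ on $(\Gamma(\CC),\varphi^\gamma)$ is $G$-equivariant, hence descends to an action of $\Gamma$ on $P_\gamma$ over $\RR$; freeness and transitivity on $\CC$-points are immediate, so $P_\gamma$ is a $\Gamma$-torsor.

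For (b), the key observation is that a morphism of $\Gamma$-torsors $f \colon P_\gamma \to P_{\gamma'}$ over $\RR$ corresponds on $\CC$-points to a bijection $\Gamma(\CC)\to\Gamma(\CC)$ that is equivariant for left multiplication by $\Gamma(\CC)$ and that intertwines $\varphi^\gamma$ with $\varphi^{\gamma'}$. The first condition forces $f$ to be right multiplication by some $\beta^{-1}\in\Gamma(\CC)$, and a short calculation shows that the second condition then becomes exactly $\gamma'=\beta\gamma\,\sigma_\Gamma(\beta)^{-1}$. This simultaneously proves well-definedness on cohomology classes and injectivity. For (c), I use that in characteristic zero any finite group scheme over $\RR$ is \'etale, so that any $\Gamma$-torsor $P$ is a finite \'etale $\RR$-scheme and $P(\CC)\neq\emptyset$. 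Choosing $p_0\in P(\CC)$, the map $\Phi\colon\Gamma(\CC)\to P(\CC)$, $g\mapsto g\cdot p_0$, is a $\Gamma(\CC)$-equivariant bijection; writing $\sigma_P(p_0)=\gamma^{-1}\cdot p_0$ for a unique $\gamma\in\Gamma(\CC)$, the fact that the $\Gamma$-action on $P$ is defined over $\RR$ transports $\sigma_P$ back to $\varphi^\gamma$ along $\Phi$. The identity $\sigma_P^2=\id$ then forces $\gamma\,\sigma_\Gamma(\gamma)=e$, so $\gamma\in Z^1(G,\Gamma(\CC))$, and $\Phi$ becomes an isomorphism of $\Gamma$-torsors $P_\gamma\xrightarrow{\sim} P$.

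The argument is essentially formal, so I do not expect a real obstacle. The main point requiring care is simply the bookkeeping of conventions (left versus right actions, and the precise sign in the definition of $\varphi^\gamma$), which must be consistent throughout; the only non-formal ingredient is the vanishing of obstructions to finding a $\CC$-point of $P$, which in our setting is automatic from $\charac(\RR)=0$ and finiteness of $\Gamma$.
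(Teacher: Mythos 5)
Your proof is correct. The paper does not actually prove this lemma; it dismisses it with a citation to a reference for the classical identification of isomorphism classes of $\Gamma$-torsors over a field $k$ with $\rm H^1(\Gal(k^{\mathrm{sep}}/k),\Gamma(k^{\mathrm{sep}}))$. Your argument unwinds that identification explicitly in the case $k=\RR$, and the three steps are carried out with the correct conventions matching the paper's (left $\Gamma$-action, $\varphi^\gamma(g)=\sigma(g)\gamma^{-1}$, and the cocycle relation $\gamma'=\beta\gamma\sigma(\beta)^{-1}$ matching the equivalence relation defined in Section~2). Step~(b) correctly shows both that $\gamma\mapsto[P_\gamma]$ factors through $\rm H^1$ and that it is injective, and step~(c) correctly produces a cocycle from any torsor by choosing a $\CC$-point and transporting the real structure. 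The one place where non-formal input enters is exactly where you flag it: one needs a $\Gamma$-torsor over $\Spec(\RR)$ to be representable by a finite \'etale $\RR$-scheme with a $\CC$-point, which follows from $\Gamma$ being finite over a field of characteristic zero (hence \'etale, and affine, so fppf-torsors are representable and finite \'etale). Compared to the paper, which simply defers to the literature, your version is self-contained; there is no substantive difference in content, only in the decision to write out the standard argument.
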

\begin{proof}
See e.g.~\cite[Example 2, page 13]{MR1845760}. 
\end{proof}
\subsubsection{The topology of the real locus of a quotient stack.} \label{sec : twistingdefinition}
We continue with the above notation. For $\gamma \in H$, define an involution
\[
\sigma^{\gamma}_{\Gamma}\colon \Gamma(\CC)\rightarrow \Gamma(\CC) \quad \quad \text{as} \quad \quad \sigma^{\gamma}_{\Gamma}(g) \coloneqq \gamma\sigma_{\Gamma}(g)\gamma^{-1}.
\]
The pair $(\Gamma(\CC), \sigma_\Gamma^\gamma)$ corresponds to a finite group scheme $\Gamma_\gamma$ over $\RR$.
Let $X$ be a quasi-projective scheme over $\RR$ with real structure $\sigma_X \colon X(\CC) \to X(\CC)$, acted upon from the left by the finite group scheme $\Gamma$ over $\RR$. 
    For $\gamma \in H$, 
    define an involution $\sigma^\gamma_X \colon X(\CC) \to X(\CC)$ as  $\sigma^\gamma_X(x) = \gamma \cdot \sigma_X(x)$. 
    The pair $(X(\CC), \sigma_X^\gamma)$ corresponds to a quasi-projective scheme $X_\gamma$ over $\RR$. 
     Note that 
    \[
    X_\gamma(\RR) = X(\CC)^{\sigma_X^\gamma} \quad \text{and} \quad \Gamma_\gamma(\RR) = \Gamma(\CC)^{\sigma_\Gamma^\gamma} \quad \text{for each} \quad \gamma \in H.
    \]
\begin{proof}[Proof of Theorem \ref{thm:introlemme-prefere}]
Recall that, by definition, one has
$$ \va{[X/\Gamma](\RR)}=\set{\text{pairs } (P, f) \mid P \text{ a $\Gamma$-torsor and } f \text{ a $\Gamma$-equivariant morphism } P \to X }/_{\simeq}$$
where $(P_1,f_1)\simeq (P_2,f_2)$ if there exists an isomorphism of torsors $h\colon P_1\xrightarrow{\simeq} P_2$ such that $f_2\circ h=f_1$. 

We need to prove that there exists a canonical homeomorphism
    \[
    \va{[X/\Gamma](\RR)} \xrightarrow{\sim} \coprod_{\gamma \in H} X_\gamma(\RR)/\Gamma_\gamma(\RR).
    \]
To prove this, we first observe that the action of $\Gamma(\CC)$ on $X(\mathbb C)$ is compatible with the involutions $\sigma^{\gamma}_{X}$ and $\sigma^{\gamma}_{\Gamma}$. Indeed, for $x \in X(\CC)$ and $g \in \Gamma(\CC)$, we have:
\[
\sigma^\gamma_X(g \cdot x) = \gamma \cdot \sigma_X(g \cdot x) = \gamma\cdot \sigma_\Gamma(g) \cdot \sigma_X(x) = \gamma \cdot \sigma_\Gamma(g) \cdot \gamma^{-1} \cdot \gamma \cdot \sigma_X(x) = \sigma_\Gamma^\gamma(g) \cdot \sigma_X^\gamma(x).
\]
Therefore, we obtain an action of the $G$-group $(\Gamma(\CC), {\sigma_\Gamma^\gamma})$ on the $G$-space $(X(\CC), \sigma_X^\gamma)$. In particular, the subgroup 
\[
\Gamma_\gamma(\RR) = \Gamma(\CC)^{\sigma^\gamma_\Gamma} \subset \Gamma(\CC)
\]
of elements fixed under $\sigma^\gamma_\Gamma$ acts on the fixed space $X_\gamma(\RR) = X(\CC)^{\sigma_X^\gamma} \subset X(\CC)$. 

Fix $\gamma\in H$ and take any $x\in X_\gamma(\RR)/\Gamma_\gamma(\RR)$. Choose a $y\in X_\gamma(\mathbb R)$ that lifts $x$ and consider the $\Gamma(\CC)$-equivariant morphism
$f_y \colon \Gamma(\CC)\rightarrow X(\mathbb C)$ defined as $g \mapsto g \cdot y.$ This morphism is compatible with the $G$-action $\varphi^{\gamma}$ on $\Gamma(\CC)$ and with the $G$-action $\sigma_{X}$ on $X(\CC)$, hence it gives rise to a $\Gamma$-equivariant morphism 
$f_y \colon P_\gamma\rightarrow X$
of schemes over $\RR$. Define 
$$\alpha(x) \coloneqq (P_\gamma,f_y)\in \va{[X/\Gamma](\mathbb R)}.$$ 
We first show that $\alpha$ is well-defined, i.e., that $\alpha$ does not depend on the choice of the lift $y$ of $x$. If $z\in X_\gamma(\RR)$ is another lift of $x$, then there exists a $g\in  \Gamma_\gamma$ such that $y=g \cdot z$. Since $g\in \Gamma_\gamma(\RR) = \Gamma(\CC)^{\sigma_\Gamma^\gamma}$, the morphism $g\colon P_\gamma\rightarrow P_\gamma$ sending $h$ to $hg$ is  an isomorphism of torsors over $\mathbb R$, fitting into the commutative diagram
\begin{center}
	\begin{tikzcd}
P_\gamma\arrow{r}{f_y}\arrow{d}{g} & X\arrow[equal]{d}\\
	P_\gamma\arrow{r}{f_z} & X. 
			\end{tikzcd}
	\end{center} 
In particular, we have an equality of isomorphism classes $[(P_\gamma,f_y)]=[(P_\gamma,f_z)] \in \va{[X/\Gamma](\mathbb R)}$. We conclude that we get a canonical map
\begin{align}\label{align:alpha-bijection}
\alpha \colon   \va{[X/\Gamma](\RR)} \to \coprod_{\gamma \in H} X_\gamma(\RR)/ \Gamma_\gamma(\RR).
\end{align}
We observe that, by construction and Lemma \ref{lem:bijectiontorsor}, the map $\alpha$ is surjective. It is also injective since $\Aut_{\Gamma}(P_{\gamma})=\Gamma_{\gamma}(\mathbb R)$. Thus, it remains to prove that $\alpha$ is a homeomorphism. 

To see this, note that for each $\gamma \in H$, we have a natural morphism 
\begin{align}\label{align:nat-mor}
X_\gamma \to [X/\Gamma].
\end{align}
Namely, to give such a map is to give:
\begin{enumerate}
    \item[(1)] a $\Gamma \times_\RR {X_{\gamma}}$-torsor $P \to X_\gamma$ over $X_\gamma$, and
    \item[(2)] a $\Gamma \times_\RR {X_{\gamma}}$ equivariant morphism $P \to X \times_\RR X_\gamma$ of schemes over $X_\gamma$.
\end{enumerate}
As for (1), we put $P = P_\gamma \times_\RR X_\gamma$, which is a $\Gamma \times_\RR X_\gamma$-torsor by base-changing the $\Gamma$-torsor structure of $P_\gamma \to \Spec(\RR)$ along $X_\gamma \to \Spec(\RR)$. As for (2), we consider the morphism 
\begin{align} \label{align:mor-Pgamma}
P_\gamma \times_\RR X_\gamma \to X \times_\RR X_\gamma 
\end{align}
defined via Galois descent by the map
\[
\Gamma(\CC) \times X(\CC) \to X(\CC) \times X(\CC), \quad \quad (g,x) \mapsto (gx,x),
\]
which is indeed compatible with the anti-holomorphic involution $(g,x) \mapsto (\varphi^\gamma(g), \sigma_X^\gamma(x))$ on the left hand side and the anti-holomorphic involution $(x,y) \mapsto (\sigma_X(x), \sigma_X^\gamma(y))$ on the right hand side. Since the map \eqref{align:mor-Pgamma} is $\Gamma \times_\RR X_\gamma$-equivariant, it yields the desired morphism \eqref{align:nat-mor}. 

We obtain a morphism
$
U \coloneqq \coprod_{\gamma \in H} X_\gamma \to [X/\Gamma],
$
and, by the fact that the map $\alpha$ in \eqref{align:alpha-bijection} is a bijection (which has already been shown), the induced map
\begin{align} \label{align:induced-surjective}
U(\RR) = \coprod_{\gamma \in H} X_\gamma(\RR) \to \va{[X/\Gamma](\RR)}
\end{align}
is surjective. By definition of the real analytic topology on $\va{[X/\Gamma](\RR)}$, see Section \ref{sec:notationstack}, it follows that the topology on $\va{[X/\Gamma](\RR)}$ is the quotient topology coming from the surjection \eqref{align:induced-surjective} and the real analytic topology on $U(\RR) = \coprod_{\gamma} X_\gamma(\RR)$. As the diagram
\[
\xymatrix{
\coprod_{\gamma \in H} X_\gamma(\RR)\ar@{=}[r] \ar[d] &\coprod_{\gamma \in H} X_\gamma(\RR) \ar[d] \\
\va{[X/\Gamma](\RR)} \ar[r]^-{\alpha} & \coprod_{\gamma \in H} X_\gamma(\RR)/\Gamma_\gamma 
}
\]
commutes, and as each quotient $X_\gamma(\RR)/\Gamma_\gamma $ carries the quotient topology coming from  $X_\gamma(\RR) \to X_\gamma(\RR)/\Gamma_\gamma $, this proves that $\alpha$ is a homeomorphism as wanted.
\end{proof}

\subsection{Smith--Thom for various quotient stacks.} \label{sec:smiththom-application-quotient}In this section we apply Theorem \ref{thm:introlemme-prefere} to prove the Smith--Thom inequality (\ref{align:inequality-ST-stacks}) in a number of examples.
\begin{example}
Let $\Gamma$ be any finite $\mathbb R$-group scheme. 
Take $X=\Spec(\mathbb R)$ with the trivial action of $\Gamma$. Then Theorem \ref{thm:introlemme-prefere} just says that
$\va{[X/\Gamma](\mathbb R)}$ is the disjoint union of  $\# \rm H^1(G,\Gamma)$ points, which also follows directly from the definitions and Lemma \ref{lem:bijectiontorsor}. (Note that we already verified the Smith--Thom inequality (\ref{align:inequality-ST-stacks}) for $[X/\Gamma]$ in \cite[Proposition 1.8]{AdGF-grup}.) 
\end{example}

\begin{example}\label{ex:A1modZ2}
Let $X \coloneqq \mathbb A^1_{\mathbb R}$. Let $\Gamma \coloneqq \ZZ/2$ endowed with the trivial $G$-action. We let $\Gamma$ act on $X$ via the map sending $x$ to $-x$. To compute $\mathcal X \coloneqq [\mathbb A^1_{\mathbb R}/\Gamma]$, we start by observing that $\rm  H^1(G,\Gamma)$ has two elements, $e$ (the trivial element) and $\gamma$. One computes that $X(\mathbb R)/\Gamma = \RR/\pm 1 \simeq \mathbb R_{\geq 0}$ and also $X_{\gamma}(\mathbb R)/\Gamma = i\RR/\pm 1 \simeq i\mathbb R_{\geq 0}$. Hence, by Theorem \ref{thm:introlemme-prefere}, we have
$$\vert \mathcal X(\mathbb R)\vert = \RR_{\geq 0} \coprod i\RR_{\geq 0}.$$
In conclusion, we find that $h^*(\vert \mathcal X(\mathbb R)\vert )=2$. On the other hand, by Proposition \ref{prop:inertia-description}, one has  $I_{[X/\Gamma]}(\mathbb C)\simeq \mathbb C\coprod \{0\}.$
    In particular $h^*(I_{[X/\Gamma]}(\mathbb C))=2$, we see that the Smith--Thom inequality (\ref{align:inequality-ST-stacks}) holds and it is an actual equality. 
For completeness, we also describe the natural map $f\colon \vert [\mathbb A^1_{\mathbb R}/\Gamma](\mathbb R)\vert \rightarrow (X/\Gamma)(\mathbb R)$ (see Figure \ref{fig: A1:Z2}). Identifying  $(X/\Gamma)(\mathbb C)$ with $\mathbb C$ via the map $z\mapsto z^2$, one sees that  $(X/\Gamma) (\mathbb R)=\mathbb R\subset \mathbb C$. Under this identification, $f$ induces homeomorphisms $X(\mathbb R)/\Gamma = \RR_{\geq 0} \xrightarrow{z \mapsto z^2} \mathbb R_{\geq 0}$ and $X(\mathbb R)_{\gamma}/\Gamma = i\RR_{\geq 0} \xrightarrow{z \mapsto z^2}\mathbb R_{\leq 0}$. Hence $\# f^{-1}(x)=1$ for every $x\neq 0$ and $\# f^{-1}(0)=2$, as predicted by Proposition \ref{prop:fiber-H1}.
\begin{figure}[h!]
\begin{center}
\begin{picture}(0,80)
\put(-150,15){$(\mathbb A^1/\ZZ/2)(\mathbb R)$}
\put(-150,65){$\vert [\mathbb A^1/\ZZ/2](\mathbb R)\vert $}
\put(-40,15){\includegraphics[width=0.2\textwidth]{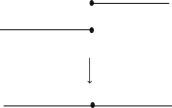}}
\end{picture}
\caption{The morphism $\vert [\mathbb A^1/(\ZZ/2)](\mathbb R)\vert \rightarrow (\mathbb A^1/(\ZZ/2))(\mathbb R)$}
\label{fig: A1:Z2}
\end{center}
\end{figure}
\end{example}

\begin{example}\label{ex:A1modZ2xZ2} 
Let $X \coloneqq \mathbb A^1_{\mathbb R}$. 
    Let $\Gamma \coloneqq \ZZ/2\times \ZZ/2$ endowed with the $G$-action exchanging the coordinates. We let $\Gamma$ act on $\mathbb A^1_{\mathbb R}$ via $(a,b)*x \coloneqq (-1)^{a+b}x$. To compute $\vert \mathcal X(\mathbb R)\vert  \coloneqq [\mathbb A^1_{\mathbb R}/\Gamma]$, we start observing that $\rm  H^1(G,\Gamma)=0$. Hence 
$\vert \mathcal X(\mathbb R)\vert =X(\mathbb R)/\Gamma(\mathbb R)= X(\mathbb R)=\mathbb R$
since $\Gamma(\mathbb R)$  acts trivially on $X(\mathbb C)$. We find that $h^*(\vert \mathcal X(\mathbb R)\vert) =1$. On the other hand, by Proposition \ref{prop:inertia-description}, one has 
 $I_{[X/\Gamma]}(\mathbb C)\simeq \mathbb A^1\coprod \mathbb A^1 \coprod \{0\}\coprod \{0\}$ so that $h^*(I_{\mathcal X(\mathbb C)})=4$. Hence the Smith--Thom inequality (\ref{align:inequality-ST-stacks}) holds and it is a strict inequality. 
 
 For completeness, we describe the natural map $f\colon \vert [\mathbb A^1_{\mathbb R}/\Gamma](\mathbb R)\vert \rightarrow (X/\Gamma)(\mathbb R)$ (see Figure \ref{fig: A1:Z2xZ2}). As in Example \ref{ex:A1modZ2}, one identifies $(X/\Gamma)(\mathbb R)$ with $\mathbb R\subset \mathbb C$. Under this identification, the map $f\colon \mathbb R\rightarrow \mathbb R$ becomes the absolute value map (hence not surjective), so that $\# f^{-1}(x)=2$ for every $x>0$, and $\# f^{-1}(0)=1$, as predicted by Proposition \ref{prop:fiber-H1}.
\begin{figure}[h!]
\begin{center}
\begin{picture}(0,80)
\put(-150,15){$(\mathbb A^1/(\ZZ/2))(\mathbb R)$}
\put(-150,55){$\vert [\mathbb A^1/(\ZZ/2)](\mathbb R)\vert $}
\put(-40,15){\includegraphics[width=0.2\textwidth]{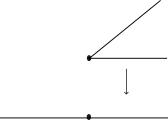}}
\end{picture}
\caption{The morphism $\vert [\mathbb A^1/(\ZZ/2\times \ZZ/2)](\mathbb R)\vert \rightarrow (\mathbb A^1/(\ZZ/2\times\ZZ/2))(\mathbb R)$}
\label{fig: A1:Z2xZ2}
\end{center}
\end{figure}
\end{example}
\begin{examples}
Let $X\coloneqq \mathbb A_{\mathbb R}^2$.
\begin{enumerate}
    \item Let  $\Gamma \coloneqq \ZZ/2$ endowed with the trivial $G$-action. We let $\Gamma$ act on $X$ via the map sending $x$ to $-x$. To compute the real locus of $\mathcal X \coloneqq [\mathbb A^2_{\mathbb R}/\Gamma]$, we start by observing that $\rm  H^1(G,\Gamma)$ has two elements, $e$ (the neutral element) and $\gamma$. By Theorem \ref{thm:introlemme-prefere}, 
$$\vert \mathcal X(\mathbb R)\vert =X(\mathbb R)/\Gamma(\mathbb R)\coprod X_{\gamma}(\mathbb R)/\Gamma(\mathbb R).$$
One computes that $X(\mathbb R)/\Gamma$ and $X_{\gamma}(\mathbb R)$ are two half-planes, so that $h^*(\vert \mathcal X(\mathbb R))\vert =2$. By Proposition \ref{prop:inertia-description}, one has  $\vert \ca I_{[X/\Gamma]}(\mathbb C)\vert\simeq \mathbb A^2\coprod \mathbb A^1$ so that $h^*(\vert \ca I_{[X/\Gamma]\vert}(\mathbb C))=2$. Hence the Smith--Thom inequality (\ref{align:inequality-ST-stacks}) holds, and is an equality.
\item Let $\Gamma \coloneqq \ZZ/2\times \ZZ/2$ endowed with the $G$-action exchanging the coordinates. We let $\Gamma$ act on $\mathbb A^2_{\mathbb R}$ via its $G$-equivariant quotient $\mathbb Z/2$, acting by exchange of coordinates. To compute $\mathcal X \coloneqq [\mathbb A^2_{\mathbb R}/\Gamma]$, we start by observing that $\rm  H^1(G,\Gamma)=0$, so that, by Theorem \ref{thm:introlemme-prefere}, 
$$\vert \mathcal X(\mathbb R)\vert =X(\mathbb R)/\Gamma(\mathbb R)=\mathbb R^2$$
since  $\Gamma(\mathbb R)$ acts trivially on $X(\mathbb C)$. We find that $h^*(\vert \mathcal X(\mathbb R)\vert =1$. 
By Proposition \ref{prop:inertia-description}, we have $I_{[X/\Gamma]}(\mathbb C)\simeq \mathbb C^2\coprod \mathbb C^2\coprod \mathbb C^1\coprod \mathbb C^1$. In particular, $h^*(I_{[X/\Gamma]}(\mathbb C))=4$. Hence the Smith--Thom inequality (\ref{align:inequality-ST-stacks}) holds, and is a strict inequality.
\end{enumerate}
\end{examples}
\begin{example}\label{ex:symmetricproduct}
    Let $Y$ be a real algebraic variety, let $\Gamma \coloneqq \ZZ/2$ act on $Y \times_\RR Y$ by exchanging the coordinates and let $\mathcal X \coloneqq [(Y \times_\RR Y)/\Gamma]$. If $\gamma$ is the non-trivial element of $\rm  H^1(G,\Gamma)$, by Theorem \ref{thm:introlemme-prefere}, one has
    $$\vert \mathcal X(\mathbb R)\vert \simeq (Y(\mathbb R)\times Y(\mathbb R))/\Gamma \coprod (Y \times_\RR Y)_{\gamma}(\mathbb R)/\Gamma\simeq (Y(\mathbb R)\times Y(\mathbb R))/\Gamma \coprod Y(\mathbb C)/G.$$
    Observe that 
    $$\va{\mathcal X(\mathbb C)}^{G}\simeq (Y(\mathbb R)\times Y(\mathbb R))/\Gamma \coprod_{Y(\mathbb R)} Y(\mathbb C)/G,$$
    where $i:Y(\mathbb R)\hookrightarrow \va{\mathcal X(\mathbb C)}^{G}$ embeds diagonally in $Y(\mathbb R)\times Y(\mathbb R)$ and naturally in $Y(\mathbb C)/G$. 
 If $f\colon\vert \mathcal X(\mathbb R)\vert \rightarrow (Y \times_\RR Y/\Gamma)(\mathbb C)^{G}$ is the natural morphism, the exact sequence of sheaves
 $0\rightarrow \ZZ/2\rightarrow f_*\ZZ/2\rightarrow i_*\ZZ/2\rightarrow 0$  shows that 
 \begin{equation}\label{eq:STsymmetric}
          h^*(\vert \mathcal X(\mathbb R)\vert )\leq h^*(\va{\mathcal X(\mathbb C)}^G)+h^*(Y(\mathbb R)).
 \end{equation}
  On the other hand, by Proposition \ref{prop:inertia-description}, we have
  $\va{\ca I_{\ca X}(\mathbb C)}\simeq \va{\mathcal X(\mathbb C)}\coprod Y(\mathbb C),$
   so that by the classical Smith--Thom inequality for $\va{\mathcal X(\mathbb C)}\coprod Y(\mathbb C)$, we get 
        $$h^*(\va{\mathcal X(\mathbb C)}^G)+h^*(Y(\mathbb R))\leq h^*(\va{\mathcal X(\mathbb C)})+h^*(Y(\mathbb C))=h^*(\va{\ca I_{\ca X}(\mathbb C)}).$$
        Combining this with (\ref{eq:STsymmetric}), we get that the Smith--Thom inequality (\ref{align:inequality-ST-stacks}) is satisfied. 
\end{example}
\begin{example}\label{ex:exampleabelianvarieties}
    Let $A$ be a real abelian variety of dimension $g$, so that $A(\mathbb R)\simeq (\mathbb S^1)^{g}\times (\ZZ/2)^k$ for some $0\leq k\leq g$ compatibly with the group structure. Consider the inversion $[-1]:A\rightarrow A$. Let $\mathcal X \coloneqq [A/(\ZZ/2)]$ where $\ZZ/2$ acts via $[-1]$. Since the set of points of $\mathcal X$ with non-trivial automorphisms is discrete, the Smith-Thom inequality for $\mathcal X$ follows from Theorem \ref{thm : discrete} that we prove in Section \ref{sec:discrete}. We use Theorem \ref{thm:introlemme-prefere} to describe $\vert \mathcal X(\mathbb R)\vert$. Let $\gamma$ be non-trivial element in $H^1(G, \ZZ/2)$ and observe that, by construction, $A_{\gamma}$ is the quadratic twist of $A$, hence $A(\mathbb R)\simeq A_{\gamma}(\mathbb R)$ as topological groups. Hence, by Theorem \ref{thm:introlemme-prefere}, we have
   $\vert \mathcal X(\mathbb R)\vert  = A(\mathbb R)/[-1]\coprod A(\mathbb R)/[-1],$
while, by Proposition \ref{prop:inertia-description}, 
$$\vert \ca I_{\mathcal X}(\mathbb C)\vert\simeq A(\mathbb C)/[-1]\coprod\left(\coprod_{x\in A(\mathbb C)[2]} \{x\} \right).$$
     In general, the topology of the spaces $A(\mathbb R)/[-1]$ and $A(\mathbb C)/[-1]$ might be  complicated; for small $g$, one has 
     $$
     A(\mathbb R)/[-1] =
\begin{cases}
[0,1] \times (\mathbb Z/2)^{k}, & \text{if } g=1 \\ 
S^2\times (\mathbb Z/2)^{k}, & \text{if } g=2, 
\end{cases}\qquad \text{and}\qquad A(\mathbb C)/[-1]=\begin{cases}
\mathbb P^1(\mathbb C), & \text{if } g=1 \\ 
\mathrm{Kum}(A), & \text{if } g=2,
\end{cases}$$
where $\mathrm{Kum}(A)$ is the singular Kummer surface of $A$. 
Hence, we get
$$
h^*(\vert \mathcal X(\mathbb R)\vert) =\begin{cases}
2k, & \text{if } g=1 \\ 
4k, & \text{if } g=2, 
\end{cases}\qquad \text{and}\qquad h^*(\vert \ca I_{\mathcal X}(\mathbb C)\vert)=\begin{cases}
6, & \text{if } g=1 \\ 
34 & \text{if } g=2,
\end{cases}$$
where the equality $h^*(\vert \ca I_{\mathcal X}(\mathbb C)\vert)=34$ for $g=2$ follows from \cite{mathkummercomputation}. This verifies the inequality \eqref{align:inequality-ST-stacks} once again for $g=1,2$, since $k\leq g$.
\end{example}
\subsection{The topology of the coarse moduli space map.} The goal of this section is 
to prove Theorem \ref{thm:coarse-map-closed}. 
For this, we need the following lemma which seems interesting in its own right, and whose proof follows roughly from \cite[Lemma 2.2.3]{abramvistoli-2002} and Galois descent. 
\begin{lemma}\label{lemma:local-structure}
Let $\ca X$ be a separated Deligne--Mumford stack of finite type over $\RR$, with coarse moduli space $\ca X \to M$. Let $x \in M(\RR)$. Then there is a scheme $N$ over $\RR$ with a real point $y \in N(\RR)$, an \'etale morphism $g \colon N \to M$ with $g(y) = x$, an affine scheme $Y$ of finite type over $\RR$, and a finite group scheme $H$ over $\RR$ that acts on $Y$ over $\RR$, such that $\ca X \times_M N \simeq [Y/H]$. 
\end{lemma}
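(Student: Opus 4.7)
The strategy is to reduce the lemma to the classical étale-local quotient structure theorem \cite[Lemma 2.2.2]{abramvistoli-2002}, and then descend the resulting presentation from $\CC$ to $\RR$ by Galois descent at the real point $x$.

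First, I apply \cite[Lemma 2.2.2]{abramvistoli-2002} to the DM stack $\ca X_\CC$ and to the geometric point $\bar x \colon \Spec(\CC) \to M$ induced by $x$. This produces an étale neighborhood $\tilde N \to M_\CC$ of $\bar x$, a finite constant group $\tilde H$, and an affine $\tilde N$-scheme $\tilde Y$ equipped with an $\tilde H$-action, together with an isomorphism $\ca X_\CC \times_{M_\CC} \tilde N \cong [\tilde Y / \tilde H]$ over $\tilde N$. Since the residue field of $M$ at $x$ is $\RR$ and $\CC = \RR^{\sep}$, étale neighborhoods of $x$ defined over $\RR$ are cofinal among étale neighborhoods of $\bar x$. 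After shrinking $\tilde N$, I may therefore assume $\tilde N = N_\CC$ for a pointed étale morphism $g \colon (N, y) \to (M, x)$ of $\RR$-schemes with $y \in N(\RR)$.

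It then remains to descend $(\tilde Y, \tilde H)$ and the action from $\CC$ to $\RR$. The stack $\ca X \times_M N$ is naturally defined over $\RR$, so pulling back complex conjugation through the isomorphism $(\ca X \times_M N)_\CC \cong [\tilde Y / \tilde H]$ produces descent data on $\tilde Y$ and $\tilde H$ compatible with both the group scheme structure and the action. Concretely, if $\bar y \in \ca X(\CC)$ is any lift of $\bar x$ and $\phi \colon \sigma(\bar y) \xrightarrow{\sim} \bar y$ is an isomorphism in $\ca X(\CC)$ (which exists because $\ca X \to M$ is essentially surjective on $\CC$-points and $\bar x$ is $G$-fixed), then $\tilde H = \Aut(\bar y)$ acquires the semilinear involution $h \mapsto \phi \circ \sigma(h) \circ \phi^{-1}$, realizing $\tilde H$ as the $\CC$-points of a finite $\RR$-group scheme $H$. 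A parallel argument equips $\tilde Y$ with a compatible semilinear $G$-action, yielding an affine $\RR$-scheme $Y$ of finite type with $H$-action such that $(Y, H)_\CC = (\tilde Y, \tilde H)$. Faithfully flat descent along $\Spec(\CC) \to \Spec(\RR)$ then promotes the isomorphism $(\ca X \times_M N)_\CC \cong [Y_\CC / H_\CC]$ to the desired isomorphism $\ca X \times_M N \cong [Y/H]$ over $\RR$.

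The main technical obstacle is verifying that the descent data produced above are genuine $G$-actions — not merely pseudo-actions defined up to inner twist — and that they are independent, up to isomorphism of quotient stacks, of the auxiliary choices of $\bar y$ and $\phi$. Two choices of $\phi$ differ by an element of $\tilde H$, which yields inner twists of the descent data; a direct check shows that these inner twists leave the isomorphism class of $[Y/H]$ unchanged, so the construction depends only on the pair $(\ca X, x)$ as required.
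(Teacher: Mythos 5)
Your approach is genuinely different from the paper's. The paper works over the henselization $M^h$ and strict henselization $M^{sh}$ of $M$ at $x$: it takes an \'etale chart $V\to\ca X$ that is \emph{defined over $\RR$ from the start}, forms a connected component $U$ of $V\times_M M^h$ (an $\RR$-scheme), extracts the group $\Gamma$ as $\pi_0$ of $R' = U'\times_{\ca X^{sh}}U'$, and reads off the $G$-action on $\Gamma$ from the given $G$-action on $M^{sh}\to M^h$; finally it spreads out from $M^h$ to an honest \'etale neighbourhood via a limit argument. You instead invoke the complex \'etale-local structure theorem as a black box over $\CC$ and try to descend the resulting presentation $[\tilde Y/\tilde H]$ to $\RR$. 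Your cofinality observation (every \'etale neighbourhood of $\bar x$ in $M_\CC$ is refined by $N_\CC$ for some \'etale $(N,y)\to(M,x)$ over $\RR$) is correct since $\kappa(x)=\RR$, and it spares you the spreading-out step; the trade is that the entire difficulty is now concentrated in the descent.

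And it is precisely the descent where there is a genuine gap. An isomorphism of quotient stacks $\sigma^{\ast}[\tilde Y/\tilde H]\cong[\tilde Y/\tilde H]$, which is what the $\RR$-structure on $\ca X\times_M N$ gives you, does \emph{not} by itself produce a descent datum on the pair $(\tilde Y,\tilde H)$: the atlas $\tilde Y\to[\tilde Y/\tilde H]$ is additional data, and there is no canonical $G$-equivariant structure on it. Your concrete recipe has two problems. First, the rule $h\mapsto\phi\circ\sigma(h)\circ\phi^{-1}$ on $\tilde H=\Aut(\bar y)$ squares to conjugation by $c\coloneqq\phi\circ\sigma(\phi)\in\tilde H$, so it is an involution only if $c$ is central in $\tilde H$; this can fail, and it cannot in general be fixed by replacing $\phi$ by $\phi h$ for $h\in\tilde H$ (that is exactly the obstruction measured by non-abelian cocycle conditions). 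Second, you never actually construct the semilinear $G$-action on $\tilde Y$ (``a parallel argument'' is not a construction), and without it there is nothing to feed into faithfully flat descent. The ``technical obstacle'' you flag at the end is real, but the resolution you offer --- that inner twists leave $[Y/H]$ unchanged --- addresses well-definedness up to choice of $\phi$, not the prior question of whether \emph{any} choice yields a genuine $G$-action. The paper's construction sidesteps all of this because its $U$, $R$ are $\RR$-schemes by construction, so the $G$-action on $\pi_0(R')$ is given rather than invented; to repair your argument you would need to make the same move, i.e., feed an $\RR$-defined \'etale atlas of $\ca X$ into the Abramovich--Vistoli construction rather than treating that lemma as an opaque statement over $\CC$.
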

\begin{proof}
Let $M^{h}$ (resp.\ $M^{sh}$) be the spectrum of the henselization (resp.\ strict henselization) of $M$ at the point $x \in M$. Let $V \to \ca X$ be a finite type \'etale morphism over $\RR$, such that $V(\CC) \to \va{\ca X(\CC)} \to M(\CC)$ has $x$ in its image, and lift $x$ to a point $z \in V(\CC)$. Let $p \in M^h$ be the closed point of $M^h$, let $u  = (z,p) \in (V \times_M M^h)(\CC)$, and let $U$ be the connected component of $V \times_M M^{h}$ that contains $u$. 

Our first goal is show that there exists a finite $\RR$-group scheme $H$ acting on $U$ over $\RR$, such that $\ca X^h = [U/H]$. Our second goal is to show that this implies that there exists an étale neighbourhood $M_i \to M$ of $x \in M$, with $M_i$ affine and of finite type over $\RR$ and equipped with a point $y \in M_i(\RR)$ mapping to $x$, as well as an affine $\RR$-scheme $U_i$ which is finite over $M_i$ and acted upon by $H$ over $\RR$, such that $\ca X \times_M M_i = [U_i/H]$. By taking $N = M_i$ and $Y = U_i$, this will prove the lemma. 

Consider the natural morphism $\pi \colon U \to M^h$. Since $U$ is connected, $M^h$ the spectrum of a henselian local ring, $\pi(u) = p$, and $\pi$ quasi-finite, it follows that $\pi$ is finite (cf.\ \cite[EGA IV, Théorème (18.5.11)]{EGA}). Define $\ca X^{sh} = \ca X \times_M M^{sh}$, $U' = U \times_{M^h}M^{sh}$ and $R' = U' \times_{\ca X^{sh}} U'$. Then $R' \to U'$ is finite étale and $U'$ is the spectrum of a strictly henselian local ring, so we have a canonical isomorphism $R' = U' \times \Gamma$, where $\Gamma$ is the set of connected components of $R'$. Moreover, as in the proof of \cite[Lemma 2.2.3]{abramvistoli-2002}, the groupoid scheme structure $[R'\rightrightarrows U']$ yield a group structure on $\Gamma$ and an action of $\Gamma$ on $U'$ such that $\ca X^{sh} = [U'/\Gamma']$. 

Define $R = U \times_{\ca X^h} U$. The next step is to show that $R = U \times_\RR H$ for a finite $\RR$-group scheme $H$ that acts on $U$ over $\RR$, and that $\ca X^h = [U/H]$. To prove this, consider the finite group $\Gamma = \pi_0(R')$ defined above. 
We have $R' = R \times_{M^h}M^{sh}$  and since the map $M^{sh} \to M^h$ is a Galois covering with Galois group $G\simeq \ZZ/2$, there is a canonical action of $G$ on $R'$ over the $G$-action on $M^{sh}$, compatible with the natural $G$-action on $U' = U \times_{M^h}M^{sh}$. Via the canonical isomorphism $R' = U' \times \Gamma$, we get a $G$-action on $\Gamma$, hence $\Gamma$ descends to a finite $\RR$-scheme $H$ such that there is a canonical isomorphism $R = U \times_\RR H$ that base changes to the canonical isomorphism $R' = U' \times \Gamma$. The composition $R \times_U R \to R$ of the groupoid scheme $[R \rightrightarrows U]$ yield a composition morphism $H \times_\RR H \to H$; since the identity $U \to R = U \times_\RR H$ base changes to the identity $U' \to R' = U' \times \Gamma$, we see that the neutral element $e \in \Gamma$ is $G$-invariant. In this way, $H \to \Spec(\RR)$ acquires the structure of a finite group scheme, and the second projection $R \to U$ yields an action of $H$ on $U$ over $\RR$ such that $\ca X^h = [U/H]$. 


To finish the proof, let $x \in W \subset M$ be an affine open neighbourhood of $x$ and $\set{M_i}_{i \in I}$ an inverse system of affine schemes $M_i$ such that each $M_i$ has a real point $x_i \in M_i(\RR)$ mapping to $x \in M(\RR)$ under an étale map $M_i \to W$, and such that $\varprojlim M_i = M^h$. Define $H_{M^h} = H \times_\RR M^h$ and $H_i = H \times_\RR M_i$ for $i \in I$. In view of \cite[\href{https://stacks.math.columbia.edu/tag/01ZM}{Tag 01ZM}]{stacks-project}, the groupoid scheme $[R \rightrightarrows U]$ over $M^{sh}$ descends to a groupoid scheme $[R_i \rightrightarrows U_i]$ over $M_i$ for some $i$, such that the following holds: we have $R_i = U_i \times_{M_i} H_i = U_i \times_\RR H$, the second projection $R_i \to U_i$ corresponds to an action 
$U_i \times_{M^i} H_i = U_i \times_\RR H \to U_i$ of the $\RR$-group scheme $H$ on the scheme $U_i$ over $\RR$, and $\ca X \times_{M}M_i = [U_i/H]$. The scheme $M_i$ is affine, and of finite type over $\RR$ because $M_i \to W$ is of finite type. As the map $U_i \to M_i$ is finite, $U_i$ is an affine scheme of finite type over $\RR$, and we are done. 
\end{proof}

\begin{proof}[Proof of Theorem \ref{thm:coarse-map-closed}]
   Let $\ca X$ be a separated Deligne--Mumford stack of finite type over $\RR$, with coarse moduli space $\ca X \to M$. By Lemma \ref{lemma:local-structure}, there exist quasi-projective schemes $Y_i$ over $\RR$, finite group schemes $\Gamma_i$ over $\RR$ acting on $Y_i$ over $\mathbb R$, and \'etale morphisms $U_i\rightarrow \mathcal X$ such that $\coprod_i U_i(\mathbb R)\rightarrow M(\mathbb R)$ is surjective and $\mathcal X\times_{M} U_i\simeq [Y_i/\Gamma_i]$. By  \cite[Lemma 2.2.2]{abramvistoli-2002}, the morphism $\mathcal X \times_M U_i \to U_i$ is the coarse moduli space of the stack $\ca X \times_M U_i$, so that we have $U_i = Y_i/\Gamma_i$. Since the diagram
   \begin{center}
	\begin{tikzcd}
   \coprod_i\va{ \left(\mathcal X\times_{M} U_i\right)(\mathbb R)} \arrow{r}\arrow{d} &\coprod_i U_i(\mathbb R) \arrow[two heads]{d} \\
 \vert \mathcal X(\mathbb R)\vert\arrow{r} & M(\mathbb R)
			\end{tikzcd}
	\end{center} 
    is cartesian, and each $U_i(\mathbb R)\rightarrow M(\mathbb R)$ is a local homeomorphism by \cite[Lemma 5.10]{AdGF-grup}, it is enough to show that $[Y_i/\Gamma_i](\mathbb R)\rightarrow (Y_i/\Gamma_i)(\mathbb R)$ is closed. This is a consequence of Theorem \ref{thm:introlemme-prefere}, as for any scheme $Y$ of finite type over $\RR$, and any finite $\RR$-group scheme $H$ acting on $Y$ over $\RR$, the natural map $Y(\RR)/H(\RR) \to (Y/H)(\RR)$ is closed. 
\end{proof}
   \subsection{Smith--Thom for stacks with discrete non-trivial stabilizer locus.}\label{sec:discrete}
   
   \begin{proof}[Proof of Theorem \ref{thm : discrete}]
       Retain the notation and the assumption as in Theorem \ref{thm : discrete}.
Let $\ca U \subset \mathcal X$ be the open substack whose points have trivial stabilizer group; we have $\ca U \simeq U$ for an algebraic space $U$ with an open immersion $U \hookrightarrow \ca X$. Write $\vert \mathcal X(\mathbb C)\vert -U(\CC)=\{p_1,\dots, p_m\}$ for some $p_i \in \va{\ca X(\CC)}$. Choose $x_j\in \mathcal X(\mathbb C)$ with isomorphism class $p_j \in \va{\ca X(\CC)}$. Let $K$ be the image of $f \colon \vert \mathcal X(\mathbb R)\vert \rightarrow \vert \mathcal X(\mathbb C)\vert^{G}$. Up to relabeling, we can assume that $p_1,\dots,p_n$ are in $K$ and $p_{n+1}\dots p_m$ are not.

 Let $V = U(\CC) \cap K \subset \va{\ca X(\CC)}$. 
 Abusing notation, we write $ f \colon \vert \mathcal X(\mathbb R)\vert \rightarrow K$ for the induced map. We denote by $g \colon V\hookrightarrow K$  the inclusion map. 
 For $j \in \set{1, \dotsc, n}$, let $i_j \colon \{p_j\}\hookrightarrow K$ be the inclusion  of the point $p_j$, and consider the following commutative diagram sheaves on $K$ whose rows are exact: 
 
\begin{equation}\label{align:diagram}
\begin{tikzcd}
0\arrow{r} &g_{!}\mathbb Z/2\arrow{r}\arrow{d} & \mathbb Z/2\arrow{r}\arrow{d} & \underset{1\leq j\leq n}{\bigoplus}i_{j,*}\mathbb Z/2\arrow{r}\arrow{d} & 0\\
0\arrow{r} &g_{!}\left((f_*\mathbb Z/2)|_{V} \right)\arrow{r} & f_*\mathbb Z/2\arrow{r} & \underset{1\leq j\leq n}\bigoplus \left(f_*\mathbb Z/2 \right)_{p_j}\arrow{r} & 0.
			\end{tikzcd}
            \end{equation}
By Theorem \ref{thm:coarse-map-closed}, the map $f\colon \vert \mathcal X(\mathbb R)\vert \rightarrow K$ is closed with finite fibers, and hence the formation of $f_\ast\ZZ/2$ commutes with arbitrary base change. Consequently, since the map $f^{-1}(V)\rightarrow V$ is an homeomorphism, the left vertical map in \eqref{align:diagram} is an isomorphism. Moreover, since $\{p_1,\dots, p_n\}$ is a finite number of points and $f^{-1}(p_j)\simeq H^1(G, \Aut(x_j))$ (see Proposition \ref{prop:fiber-H1}), we can rewrite diagram \eqref{align:diagram} as follows:
\begin{center}
	\begin{tikzcd}
0\arrow{r} &g_{!}\mathbb Z/2\arrow{r}\arrow[equal]{d} & \mathbb Z/2\arrow{r}\arrow{d} & \underset{1\leq j\leq n}{\bigoplus}i_{j,*}\mathbb Z/2\arrow{r}\arrow{d} & 0\\
0\arrow{r} &g_{!}\mathbb Z/2\arrow{r} & f_*\mathbb Z/2\arrow{r} & \underset{1\leq j\leq n}{\bigoplus}i_{j,*}\left((\mathbb Z/2)^{H^1(G, \Aut(x_j))} \right)\arrow{r} & 0,
			\end{tikzcd}
	\end{center} 
    where the vertical map on the right is induced by the diagonal inclusion $\mathbb Z/2\rightarrow (\mathbb Z/2)^{H^1(G, \Aut(x_j))}$.
    Passing to cohomology with coefficients in $\ZZ/2$, it yields a commutative diagram with exact rows
    \begin{center}
	\begin{tikzcd}[column sep=tiny]
0\arrow{r} &H^0_c(V)\arrow{r}\arrow[equal]{d} & H^0(K)\arrow{r}\arrow{d} & \underset{1\leq j\leq n}{\bigoplus}\mathbb Z/2\arrow{r}\arrow[hook]{d} & H^1_c(V)\arrow{r}\arrow[equal]{d}\arrow{r}\arrow[equal]{d}& H^1(K)\arrow{d}\arrow{r} &0\\
0\arrow{r} &H^0_c(V)\arrow{r} & H^0(\vert \mathcal X(\mathbb R)\vert )\arrow{r} & \underset{1\leq j\leq n}{\bigoplus} (\mathbb Z/2)^{H^1(G, \Aut(x_j))}\arrow{r} &  H^1_c(V)\arrow{r}& H^1(\vert \mathcal X(\mathbb R)\vert )\arrow{r}&  0,
			\end{tikzcd}
	\end{center} 
    and isomorphisms 
    \begin{equation}\label{eq : isogeq2}
         H^r(\vert \mathcal X(\mathbb R)\vert )\simeq H^r(K) \qquad \text{ for } r\geq 2.
         \end{equation}
    In particular, the map $H^1(K)\rightarrow H^1(\vert \mathcal X(\mathbb R)\vert )$ is surjective, 
and there is a commutative diagram with exact rows
\begin{center}
        	\begin{tikzcd}[column sep=tiny]
0\arrow{r} & H^0(K)/H^0_c(V)\arrow{r}\arrow{d} & \underset{1\leq j\leq n}{\bigoplus}\mathbb Z/2\arrow{r}\arrow[hook]{d} & \Ker(H^1_c(V)\rightarrow H^1(K))\arrow[hook]{d}\arrow{r} &0\\ 
0\arrow{r} & H^0(\vert \mathcal X(\mathbb R)\vert )/H^0_c(V)\arrow{r} & \underset{1\leq j\leq n}{\bigoplus} (\mathbb Z/2)^{H^1(G, \Aut(x_j))}\arrow{r}&  \Ker(H^1_c(V)\rightarrow H^1(\vert \mathcal X(\mathbb R)\vert ))\arrow{r}&  0,
			\end{tikzcd}
	\end{center} 
    in which the middle and the right vertical maps are injective. This shows that 
{\footnotesize
\begin{align}\label{eq : ineq0}
\begin{split}
    \dim(H^0&(\vert \mathcal X(\mathbb R)\vert))\\ &= \dim H^0_c(V) + \sum_{j = 1}^n \# H^1(G, \Aut(x_j))-\dim \Ker\left(H^1_c(V)\rightarrow H^1(\vert \mathcal X(\mathbb R)\vert ) \right) \\
    & \leq \dim H^0_c(V) + \sum_{j = 1}^n \# H^1(G, \Aut(x_j)) - \dim \Ker\left(H^1_c(V)\rightarrow H^1(K) \right)
    \\
    &= \dim(H^0(K))+\sum_{1\leq j\leq n}(\# H^1(G, \Aut(x_j))-1).
\end{split}
\end{align}
}
Combining the surjectivity of $H^1(K)\rightarrow H^1(\vert \mathcal X(\mathbb R)\vert )$ with 
\eqref{eq : isogeq2} and \eqref{eq : ineq0}, we get
\begin{equation}\label{eq : inK}
    h^*(\vert \mathcal X(\mathbb R)\vert )\leq h^*(K)+\sum_{1\leq j\leq n}(\# H^1(G, \Aut(x_j))-1). 
\end{equation}
We claim that $K$ is a union of connected components of $\vert \ca X(\mathbb C)\vert^G$. To prove this, let $x \in K \subset \va{\ca X(\CC)}^G$; it suffices to show that the connected component $C_x$ of $\va{\ca X(\CC)}^G$ containing $x$ is contained in $K$. The map $f^{-1}(U(\RR)) \to U(\RR)$ is a homeomorphism, so that 
$C_x$ contains a dense open subset which lies in the image of the map $f^{-1}(C_x) \to C_x$; as this map is closed by Theorem \ref{thm:coarse-map-closed}, it must be surjective, proving the claim.

Consequently, $h^\ast(K) \leq h^\ast(\va{\ca X(\CC)}^G)$. In turn, by the classical Smith--Thom inequality  \eqref{align:inequality-ST}, one has $h^\ast(\va{\ca X(\CC)}^G) \leq h^\ast(\va{\ca X(\CC)})$. In view of \eqref{eq : inK}, this yields
\begin{align*}
     h^*(\vert \mathcal X(\mathbb R)\vert )& \leq h^*(\vert \ca X(\mathbb C)\vert^G)+\sum_{1\leq j\leq n}(\# H^1(G, \Aut(x_j))-1) \\
     &\leq h^*(\vert \ca X(\mathbb C)\vert)+\sum_{1\leq j\leq m}(\# H^1(G, \Aut(x_j))-1)\\
& \leq h^*(\vert \ca X(\mathbb C)\vert)+\sum_{1\leq j\leq m}(\#(\Aut(x_j)/\Aut(x_j))-1)\\
&= h^*(\vert \mathcal I_{\mathcal X}(\mathbb C)\vert),
\end{align*}
where the second to last inequality follows from \cite[Lemma 6.1]{AdGF-grup}, and the last equality from 
Proposition \ref{prop:inertia-description} and Lemma \ref{lemma:local-structure}. This concludes the proof.
   \end{proof}

\section{Smith--Thom for real stacky curves}\label{sec:STforcurves}
The goal of this section is to prove Theorem \ref{thm:introST-curves}. 
We compute \( h^*( \va{\ca I_{[X/\Gamma]}(\mathbb{C}))} \) in Section \ref{sec:inertiacurve}, and \( h^*(\lvert [X/\Gamma](\mathbb{R}) \rvert) \) in Section \ref{sec:realpointscurve}. We then combine these computations in Section \ref{sec:proofsmiththomcurve} to establish Theorem \ref{thm:introST-curves}.

\subsection{Inertia of complex stacky curves.}\label{sec:inertiacurve}
Let $X$ be a smooth curve over $\CC$, and let $\Gamma$ be a finite group which acts on $X$ over $\CC$. We let $K \subset \Gamma$ be the kernel of the homomorphism $\Gamma \to \Aut_\CC(X)$ associated to the $\Gamma$-action, and define $Q \coloneqq \Gamma/K$. 
The restriction of the action on $X$ of $\Gamma$ to $K$ yields the trivial action of $K$ on $X$, and the induced action of $Q$ on $X$ is faithful. Let $$M \coloneqq X/Q=X/\Gamma,$$
which is the coarse moduli space of both $[X/Q]$ and $[X/\Gamma]$. Let $I_{[X/\Gamma]}$ (resp.\ $I_{[X/Q]}$) be the coarse moduli space of the inertia stack $\mathcal I_{[X/\Gamma]}$ (resp.\ $\mathcal I_{[X/Q]}$) of $[X/\Gamma]$ (resp.\ $[X/Q]$).
\begin{proposition} \label{prop:inertia-curves-description}
In the above notation, assume that the subgroup $K \subset \Gamma$ is contained in the center of $\Gamma$, so that for every $x\in X(\mathbb C)$ there is a natural inclusion $K\subset \Gamma_{x}/\Gamma_{x}$. Let $\Delta \subset M(\CC)$ be the branch locus of the quotient map $q \colon X(\CC) \to M$, and choose a lift $y_x \in X(\CC)$ of each $x \in \Delta$. Then there is a canonical homeomorphism
    \[
I_{[X/\Gamma]}(\CC)=   \vert \mathcal I_{[X/\Gamma]}(\CC) \vert \simeq \left(K \times M(\CC) \right) \coprod \coprod_{x \in \Delta} \left( \Gamma_{y_x}/\Gamma_{y_x} - K \right) 
    \]
    that commutes with the natural projections onto $M(\mathbb C)$. 
\end{proposition}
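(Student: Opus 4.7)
The plan is to apply Proposition \ref{prop:inertia-description} with $S=\Spec(\CC)$ and $H=\Gamma$, and then to use that $\Gamma$ is abelian to decompose the resulting set according to the second coordinate. Proposition \ref{prop:inertia-description} gives
\[
\va{\ca I_{[X/\Gamma]}(\CC)} = \set{(x,\gamma) \in X(\CC) \times \Gamma \mid \gamma x = x}\big/\sim, \qquad (x,\gamma) \sim (g x,\, g\gamma g^{-1}).
\]
Since $\Gamma$ is abelian, the equivalence relation preserves $\gamma$, so the set decomposes as a disjoint union
\[
\va{\ca I_{[X/\Gamma]}(\CC)} \;=\; \coprod_{\gamma \in \Gamma}\, X^\gamma(\CC)\big/\Gamma,
\]
where $X^\gamma \subset X$ denotes the fixed locus of the automorphism $\gamma$ of $X$.

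Next, I would split the union according to whether $\gamma \in K$ or $\gamma \in \Gamma \setminus K$. For $\gamma \in K$, the action of $\gamma$ on $X$ is trivial, so $X^\gamma = X$; since the $\Gamma$-action on $X$ factors through $Q$, the quotient $X(\CC)/\Gamma$ coincides with $X(\CC)/Q = M(\CC)$, and the total contribution from $\gamma \in K$ is $K \times M(\CC)$. For $\gamma \notin K$, the element $\gamma$ acts non-trivially on the smooth curve $X$, so $X^\gamma(\CC)$ is a finite subset of the ramification locus of $q \colon X \to M$. By abelianness, $\gamma$ stabilizes $y \in X(\CC)$ if and only if $\gamma$ stabilizes every point in the $\Gamma$-orbit of $y$; hence the $\Gamma$-orbits in $X^\gamma(\CC)$ are precisely the fibers $q^{-1}(x)$ for those $x \in \Delta$ such that $\gamma \in \Gamma_{y_x}$, yielding a bijection
\[
X^\gamma(\CC)\big/\Gamma \;=\; \set{x \in \Delta \mid \gamma \in \Gamma_{y_x}}.
\]

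Exchanging the order of summation in the resulting double disjoint union,
\[
\coprod_{\gamma \in \Gamma \setminus K}\, \set{x \in \Delta \mid \gamma \in \Gamma_{y_x}} \;=\; \coprod_{x \in \Delta}\, (\Gamma_{y_x} \setminus K),
\]
and identifying $\Gamma_{y_x}/\Gamma_{y_x}$ with $\Gamma_{y_x}$ via the paper's convention for conjugation quotients (which collapses in the abelian case), one obtains the claimed set-theoretic bijection. Compatibility with projection onto $M(\CC)$ is immediate from the construction: the summand $K \times M(\CC)$ projects by the second factor, and a point of $(\Gamma_{y_x} \setminus K)$ is sent to $x \in \Delta \subset M(\CC)$.

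Finally, to upgrade the bijection to a homeomorphism, I would observe that the natural projection $\va{\ca I_{[X/\Gamma]}(\CC)} \to M(\CC)$ is continuous, that its restriction to the summand indexed by each $\gamma \in K$ is the coarse moduli space map $X(\CC)/Q \to M(\CC)$ (a homeomorphism), and that the remaining summands are discrete finite sets supported on the discrete subset $\Delta \subset M(\CC)$, for which the quotient topology is forced. I expect no substantial obstacle beyond this bookkeeping; the essential inputs are the abelianness of $\Gamma$ (which ensures both that $X^\gamma(\CC)$ is $\Gamma$-stable and that conjugation quotients degenerate) and the local smoothness of $X$ (which ensures $X^\gamma(\CC)$ is finite whenever $\gamma$ acts non-trivially).
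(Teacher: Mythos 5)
Your approach is genuinely different from the paper's. The paper works scheme-theoretically with the coarse moduli space $I_{[X/\Gamma]} = S/\Gamma$ of the inertia: using centrality of $K$, it constructs $\#K$ disjoint sections $s_k \colon [x] \mapsto [(x,k)]$ of the finite morphism $I_{[X/\Gamma]} \to M$, notes that the generic fiber has cardinality $\#K$, and deduces from finiteness of morphisms of curves that the $s_k(M)$ are exactly the one-dimensional irreducible components, leaving a finite set over $\Delta$. You instead decompose the set of Proposition \ref{prop:inertia-description} directly by the group element, which is more elementary but raises two concerns.

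First, the splitting $\coprod_{\gamma} X^\gamma(\CC)/\Gamma$ requires $\Gamma$ abelian, so that the relation $(x,\gamma)\sim(gx,g\gamma g^{-1})$ preserves the second coordinate. The paper's section opening does say ``finite abelian group'', but this is evidently a slip: the proposition's explicit hypothesis that $K$ be central, the conjugacy-class sets $\Gamma_{y_x}/\Gamma_{y_x}$ appearing in the statement, and the clause ``if $\Gamma_{y_x}$ is abelian'' in Proposition \ref{prop:betti-nr-inertia-abelian} all presuppose a not-necessarily-abelian $\Gamma$, and the paper's own proof nowhere uses abelianness. To cover the intended generality, decompose instead by the conjugation-invariant dichotomy $\gamma \in K$ versus $\gamma \notin K$; centrality of $K$ still yields the $K \times M(\CC)$ piece because central elements are singleton conjugacy classes, and for $\gamma\notin K$ you must pass to $\Gamma_{y_x}$-conjugacy classes before exchanging the order of summation, which produces $\Gamma_{y_x}/\Gamma_{y_x}-K$ rather than $\Gamma_{y_x}\setminus K$.

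Second, the topological upgrade is not finished. A fiberwise bijection compatible with the projections to $M(\CC)$ need not be a homeomorphism: one must rule out that the isolated points over $\Delta$ lie in the closure of the $K$-sheets. This follows, for instance, from the fact that each $s_k(M(\CC))$ is the image of a section of a finite map onto the irreducible curve $M(\CC)$, hence a closed one-dimensional irreducible component, so the complement (the finite set over $\Delta$) is open. The paper's argument via irreducible components of $I_{[X/\Gamma]}$ handles exactly this; your sketch anticipates it as ``bookkeeping'', but it does need to be spelled out before the bijection becomes a homeomorphism.
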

\begin{proof}
We may assume that $X$ is connected. It suffices to show that the map $I_{[X/\Gamma]} \to M$ has $\# K$ disjoint sections. Indeed, $I_{[X/\Gamma]} \to M$ is finite by \cite[Lemma 5.2]{AdGF-grup}, hence for each irreducible component $Z \subset I_{[X/\Gamma]}$ of dimension one, the restriction $Z \to M$ is a finite morphism of irreducible curves, hence an isomorphism if it admits a section; moreover, over the open subset of $M$ where the stabilizer group is exactly $K$, the fibers of $I_{[X/\Gamma]} \to M$ have cardinality exactly $\# K$ by Proposition \ref{prop:inertia-description}.

Let $S \subset X \times_\CC \Gamma$ be the stabilizer group scheme associated to the action of $\Gamma$ on $X$ over $\CC$, so that $S$ can be described pointwise as
$S = \set{(x, g) \in X \times_\CC \Gamma \mid g \cdot x = x}$. Then $\Gamma$ acts on $S$ by $\gamma \cdot (x,g) = (\gamma \cdot x, \gamma g \gamma^{-1})$ for $\gamma \in \Gamma$ and $(x,g) \in S$. Moreover, we have a canonical isomorphism
$\ca I_{[X/\Gamma]} = [S/\Gamma]$ (see e.g. \cite[Exercise 3.2.12]{notesstack}), so that, in particular, $I_{[X/\Gamma]} = S/\Gamma$.

Since $K$ is contained in the center of $\Gamma$, to any $k \in K$ one can associate the following well defined section $s_k$ of the canonical map $ I_{[X/\Gamma]} \to M$: 
\begin{align}\label{align:sec-k}
s_k \colon  M \to  I_{[X/\Gamma]} = S/\Gamma, \quad \quad [x] \mapsto [(x, k)].
\end{align}
The sections $s_k$ and $s_{k'}$ are disjoint for $k \neq k' \in K$, and so the proposition follows. 
\end{proof}

\begin{proposition} \label{prop:betti-nr-inertia-abelian}
Assume that $K \subset \Gamma$ is contained in the center of $\Gamma$.  
Then
    \begin{align} \label{eq:FXGam:1}
    \begin{split}
    h^\ast(I_{[X/\Gamma]}(\mathbb C)) = \# K \cdot h^\ast(M(\CC))   + \left(\sum_{x \in \Delta}\# (\Gamma_{y_x}/\Gamma_{y_x})\right) - \# \Delta\cdot \# K.
    \end{split}
    \end{align}
    If, in addition, $\Gamma_{y_x}$ is abelian for each $x \in \Delta$, then
    \begin{align}
    \label{eq:FXGam:2}
     h^\ast( I_{[X/\Gamma]}(\mathbb C)) = \# K  \cdot h^\ast(I_{[X/Q]}(\mathbb C)). 
    \end{align}
\end{proposition}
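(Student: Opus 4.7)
The plan is to derive both equalities directly from the homeomorphism provided by Proposition \ref{prop:inertia-curves-description}, which decomposes $\vert \mathcal I_{[X/\Gamma]}(\CC) \vert$ as a disjoint union of $\#K$ copies of $M(\CC)$ together with, for each branch point $x \in \Delta$, a finite set of cardinality $\#(\Gamma_{y_x}/\Gamma_{y_x}) - \#K$. Since mod-$2$ cohomology of a disjoint union is additive, each copy of $M(\CC)$ contributes $h^\ast(M(\CC))$ and each isolated point contributes $1$. Summing over the components yields immediately
\begin{align*}
h^\ast(\vert \mathcal I_{[X/\Gamma]}(\mathbb C)\vert) = \# K \cdot h^\ast(M(\CC))   + \sum_{x \in \Delta}\bigl(\# (\Gamma_{y_x}/\Gamma_{y_x}) - \#K\bigr),
\end{align*}
which after regrouping is exactly \eqref{eq:FXGam:1}. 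This step is entirely formal once Proposition \ref{prop:inertia-curves-description} is in hand.

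For \eqref{eq:FXGam:2}, I would apply \eqref{eq:FXGam:1} to the faithful action of $Q$ on $X$: since the kernel of $Q \to \Aut_\CC(X)$ is trivial, \eqref{eq:FXGam:1} gives
\begin{align*}
 h^\ast(\vert \mathcal I_{[X/Q]}(\mathbb C)\vert) =  h^\ast(M(\CC))   + \sum_{x \in \Delta}\# (Q_{y_x}/Q_{y_x})  - \# \Delta.
\end{align*}
The comparison between the two formulas then rests on two compatibilities of stabilizers under passage to the quotient $\Gamma \to Q$. First, since $K$ acts trivially on $X$, one has $K \subset \Gamma_{y_x}$ and $Q_{y_x} = \Gamma_{y_x}/K$ for each $x \in \Delta$. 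Second, because $\Gamma_{y_x}$ is assumed abelian, so is its quotient $Q_{y_x}$, and conjugacy classes are singletons in both groups; hence $\#(\Gamma_{y_x}/\Gamma_{y_x}) = \#\Gamma_{y_x} = \#K \cdot \#Q_{y_x} = \#K \cdot \#(Q_{y_x}/Q_{y_x})$.

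Substituting these identities into \eqref{eq:FXGam:1} factors out $\#K$ uniformly from every term and yields \eqref{eq:FXGam:2}. There is no real obstacle here; the only point requiring care is to keep track of the hypothesis that $\Gamma_{y_x}$ be abelian, which is used precisely to pass from conjugacy classes to elements and to relate the cardinality of $\Gamma_{y_x}/\Gamma_{y_x}$ to that of $Q_{y_x}/Q_{y_x}$. Without abelianness, this numerical comparison may fail, so the hypothesis cannot be dropped from the statement.
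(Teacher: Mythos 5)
Your proof is correct and follows essentially the same route as the paper: both derive \eqref{eq:FXGam:1} by taking mod-$2$ cohomology of the disjoint-union decomposition in Proposition \ref{prop:inertia-curves-description}, and both obtain \eqref{eq:FXGam:2} by applying \eqref{eq:FXGam:1} to the faithful $Q$-action and using $\#\Gamma_{y_x}=\#K\cdot\#Q_{y_x}$ together with the abelianness of $\Gamma_{y_x}$ (and hence of $Q_{y_x}$) to replace conjugacy-class counts by group orders. The only addition you make beyond what the paper records explicitly is spelling out the identification $Q_{y_x}=\Gamma_{y_x}/K$, which is a harmless clarification.
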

\begin{proof}
By Proposition \ref{prop:inertia-curves-description}, we have
 \begin{align} \label{align:dim-coh-inertia}
    h^\ast(I_{[X/\Gamma]}(\mathbb C)) &= \# K \cdot  h^\ast(M(\CC)) + \sum_{x \in \Delta}\left(\# (\Gamma_{y_x}/\Gamma_{y_x})  - \# K\right), 
\end{align}
and \eqref{align:dim-coh-inertia} implies \eqref{eq:FXGam:1}. Applying \eqref{align:dim-coh-inertia} to the quotient stack $[X/Q]$ gives
\begin{align}\label{align:QKGamma:2}
    h^\ast(I_{[X/Q]}(\mathbb C))  &=  h^\ast(M(\CC)) + \sum_{x \in \Delta}\left(\# (Q_{y_x}/Q_{y_x}) - 1 \right). 
\end{align}
If $\Gamma_{y_x}$ is abelian for each $x \in \Delta$, then one has
$\Gamma_{y_x}/\Gamma_{y_x}=\Gamma_{y_x}$, $Q_{y_x}/Q_{y_x}=Q_{y_x}$ and $\#K \cdot \# Q_{y_x} = \# \Gamma_{y_x}.$
Therefore, \eqref{eq:FXGam:2} follows from \eqref{align:dim-coh-inertia} and \eqref{align:QKGamma:2}, and we are done. 
\end{proof}

\begin{example}
Consider the moduli stack $\ca A_1$ of elliptic curves over $\CC$, with coarse moduli space $\ca A_1 \to \mathsf A_1 = \bb A^1_\CC$. 
Then $h^\ast(\vert \mathcal I_{\mathcal A_1}(\CC)\vert ) = 8$. Indeed, we let $\ell \geq 3$ be a prime number and let $\ca A_1[\ell]$ be the moduli space of elliptic curves with level $\ell$ structure; it is equipped with a $\SL_2(\bb F_\ell)$-action such that $\ca A_1 = [\SL_2(\bb F_\ell) \sm \ca A_1[\ell]]$. In this case, we have $K = \langle -1 \cdot \Id \rangle \subset \SL_2(\bb F_\ell) = \Gamma$, and $\Gamma/K = \PSL_2(\bb F_\ell)$. The locus $\Delta \subset \mathsf A_1(\CC)$ of isomorphism classes of elliptic curves with automorphism group larger than $\set{\pm 1}$ consists of two points, with respective automorphism groups $\ZZ/4$ and $\ZZ/6$. Thus, Proposition \ref{prop:betti-nr-inertia-abelian} implies that $h^\ast(\vert \mathcal I_{\mathcal A_1}(\CC)\vert ) = 2 + 4 + 6 - 2 \cdot 2 = 8$. 
\end{example}
\subsection{Topology of real stacky curves.}\label{sec:realpointscurve}
In this section, we study the topology of $\va{\ca X(\RR)}$ when $\ca X$ is a stacky quotient curve over $\RR$. 
\subsubsection{Local geometry.} 
Let $\ca X$ be a separated Deligne--Mumford stack of finite type over $\RR$. Let $f \colon \va{\ca X(\RR)} \to M(\RR)$ be the map induced by the coarse moduli space $\ca X \to M$. 
\begin{definition}\label{def:h1G}For $x \in M(\RR)$, define $h^1(G,x) = \# \rm H^1(G, \Aut(z_\CC))$ where $z \in \ca X(\RR)$ is an object whose isomorphism class $[z] \in \va{\ca X(\RR)}$ lies in the inverse image $f^{-1}(x)$ of $x$. By Proposition \ref{prop:fiber-H1}, we have $h^1(G,x) = \# f^{-1}(x)$. 
\end{definition}

For example, when $\ca X$ is the quotient $\ca X = [X/H]$ of a real variety $X$ by finite group scheme $H$ over $\RR$, and when $x \in (X/H)(\RR)$ is the image of a point $y \in X(\RR)$ under the map $X(\RR) \to (X/H)(\RR)$, then $h^1(G,x) = \#\rm H^1(G, H_y(\CC))$. This definition allows us to study the local topology of a stacky curve around a point with non-trivial stabilizer.
    \begin{lemma} \label{lem:local-groupstructure-curves}
   Let $X$ be a smooth curve over $\RR$. Let $H$ be a finite $\RR$-group scheme that acts faithfully on $X$ over $\RR$. 
   \begin{enumerate}\item For each $x \in X(\RR)$, there exists an integer $n \geq 1$ such that the stabilizer group scheme $H_x$ is isomorphic to $\mu_n$. 
        \item For $x \in (X/H)(\RR)$, the number $ h^1(G,x)$ is equal to $1$ (resp.\ $2$) if $n$ is odd (resp.\ even).
        \end{enumerate}
    \end{lemma}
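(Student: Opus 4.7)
To prove (1), I would exploit the action of the stabilizer group scheme $H_x$ on the completed local ring $\widehat{\mathcal O}_{X,x}$, which is isomorphic to $\RR[[t]]$ because $X$ is a smooth real curve with a real point at $x$. In characteristic zero, any action of a finite group scheme on $\Spec \RR[[t]]$ fixing the closed point is linearizable (Cartan's lemma), and is therefore completely determined by the derived action, i.e., by a homomorphism $\rho \colon H_x \to \GL(T_x X) \cong \GG_{m,\RR}$. The key point is to show that $\rho$ is a closed immersion using the faithfulness of the $H$-action on $X$: if $h \in \ker\rho$, then $h$ acts trivially on $\widehat{\mathcal O}_{X,x}$, so the scheme-theoretic fixed locus $X^h$ has the same completion at $x$ as the smooth irreducible component $X_0$ through $x$; by faithful flatness and noetherianity, $X^h$ contains a Zariski-open neighborhood of $x$ in $X_0$, hence all of $X_0$, and, after treating the remaining connected components of $X$ using the global faithfulness hypothesis, one deduces $h = e$. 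The classification of finite subgroup schemes of $\GG_m$ over a field of characteristic zero then gives $H_x \cong \mu_n$ for a unique $n \geq 1$.

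For (2), let $x \in (X/H)(\RR)$ with $f^{-1}(x) \neq \emptyset$, pick $z \in [X/H](\RR)$ with $f([z]) = x$, and use Theorem \ref{thm:introlemme-prefere} to represent $z$ by a real point $y \in X_\gamma(\RR)$ for some $\gamma \in \rm H^1(G, H(\CC))$. By construction, $\Aut(z_\CC) \cong (H_\gamma)_y(\CC)$ as $G$-groups, the $G$-action coming from the twisted real structure $\sigma^\gamma_H$ on $H(\CC)$. Applying (1) to the faithful action of the real group scheme $H_\gamma$ on the real curve $X_\gamma$ at the real point $y$, I obtain an isomorphism $(H_\gamma)_y \cong \mu_n$ of $\RR$-group schemes, where $n = \# H_w(\CC)$ for any complex preimage $w$ of $x$ and is therefore intrinsic to $x$. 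Since the injection $(H_\gamma)_y \hookrightarrow \GG_m$ is defined over $\RR$, the induced $G$-action on $(H_\gamma)_y(\CC) \cong \mu_n(\CC)$ is the restriction of complex conjugation on $\CC^\times$, i.e., inversion $\zeta \mapsto \zeta^{-1}$. A direct Galois cohomology computation then finishes the proof: the cocycle condition $\zeta \cdot \sigma(\zeta) = 1$ is automatic under inversion, so $Z^1 = \mu_n(\CC)$, whereas the coboundaries $\sigma(\beta)\beta^{-1} = \beta^{-2}$ form the subgroup of squares, which equals $\mu_n(\CC)$ when $n$ is odd and $\mu_{n/2}(\CC)$ when $n$ is even. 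Together with Proposition \ref{prop:fiber-H1}, this gives $h^1(G,x) = \# \rm H^1(G, \mu_n(\CC)) = 1$ or $2$ according to the parity of $n$.

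I expect the main obstacle to be the faithfulness step in (1): one must pass from the global hypothesis that $H$ acts faithfully on the possibly disconnected curve $X$ to the local statement that the tangent representation $H_x \to \GG_m$ is injective. The characteristic zero linearization on $\RR[[t]]$ handles the formal side cleanly, but translating formal triviality into global triviality on $X$ requires a careful closed-subscheme / connected-component analysis. Once this is in place, the reduction via Theorem \ref{thm:introlemme-prefere} and the explicit Galois cohomology computation for $\mu_n$ with inversion action are essentially mechanical.
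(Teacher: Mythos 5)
Your approach to item (1) is sound but differs genuinely from the paper's. The paper works complex-analytically: it chooses a $\Gamma$- and $G$-stable neighbourhood of $x$ that is $G$-equivariantly biholomorphic to a disk, observes that the group of biholomorphisms of a disk fixing its centre is $\mathbb S^1$, deduces that $\Gamma_x$ is cyclic with a generator acting by rotation, and then reads off the $G$-equivariant isomorphism $\Gamma_x\simeq\mu_n$ from the compatibility of the $\Gamma$-action with complex conjugation. You instead linearize algebraically on $\widehat{\mathcal O}_{X,x}\cong\RR[[t]]$, obtain the tangent representation $\rho\colon H_x\to\GG_{m,\RR}$, argue injectivity, and invoke the classification of finite subgroup schemes of $\GG_m$ in characteristic zero. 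Both routes reach $\mu_n$, and both hinge on the same faithfulness input, so this is a legitimate alternative.

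However, the injectivity step you flag as the main obstacle is where both your proposal and the paper's proof are genuinely fragile, and I don't think the issue can be resolved purely by ``a careful closed-subscheme / connected-component analysis.'' If $X$ is disconnected --- say $X=X_0\sqcup X_1$ with $H=\ZZ/2\times\ZZ/2$, where the first factor acts on $X_0$ by $t\mapsto -t$ and trivially on $X_1$, while the second factor acts trivially on $X_0$ and by $t\mapsto -t$ on $X_1$ --- then the global action is faithful, yet $H_0$ at the origin of $X_0$ is all of $\ZZ/2\times\ZZ/2$, and $\rho$ has kernel the second factor. So $H_x$ is not $\mu_n$. In other words, faithfulness of the global action does not force faithfulness of the stabilizer's local action near $x$, and the passage ``one deduces $h=e$'' fails. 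This is a flaw in the lemma as stated; the paper's proof shares it (the claim that only finitely many real points have nontrivial stabilizer already breaks for this $X$), so it is not a defect of your route specifically, but it is not fixable by more care --- one must either assume $X$ connected, or replace ``faithful on $X$'' with ``faithful on the connected component of $x$,'' or restrict to the actual use cases in the paper.

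Your treatment of item (2) is more careful than the paper's. The paper simply asserts that (2) follows from (1) together with $\#\rm H^1(G,\mu_n)\in\{1,2\}$, which tacitly requires the argument of (1) to apply over a lift of $x$ to $X(\RR)$; but a point of $(X/H)(\RR)$ need not lift to $X(\RR)$. You handle this cleanly by invoking Theorem \ref{thm:introlemme-prefere} to represent $z$ by a real point $y$ of a twist $X_\gamma$, applying (1) to the (still faithful, with the same complex points) twisted action of $H_\gamma$ on $X_\gamma$, and noting that the resulting $n$ is intrinsic to $x$ since $(H_\gamma)_y(\CC)=H_y(\CC)$. Your explicit Galois cohomology computation for $\mu_n$ with inversion action is correct and matches the paper's convention for coboundaries $\gamma\sim\beta\gamma\sigma(\beta)^{-1}$.
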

\begin{proof}
Let $\Gamma = H(\CC)$. Since the action of $\Gamma$ is faithful, there are only finitely many points $x \in X(\RR)$ 
 with non-trivial stabilizer $\Gamma_x$. Since the statement is trivial for points with trivial stabilizer, we focus on the points $x$ with  $\Gamma_x\neq 0$. Choose an open neighbourhood $U$ of $x$, stable under the actions of $G$ and $\Gamma$, not containing any other point with non-trivial stabilizer, and $G$-equivariantly biholomorphic to an open disk centered in $x$ endowed with the $G$-action given by complex conjugation. Since the group of biholomorphisms of the disk with one fixed point is isomorphic to $\mathbb S^1$, we see that $\Gamma_x$ is isomorphic to $\ZZ/n$ for some positive integer $n$. Moreover, a generator $\gamma \in \Gamma_x$ acts as $\gamma(z)=e^{i\theta}z$ if $z$ is a local coordinate around $x$. Since the $G$-action is compatible with the action of $\Gamma$, this forces a $G$-equivariant isomorphism $\Gamma_x\simeq \mu_n$, proving the first item of the lemma. By the fact that $\# \rm H^1(G,\mu_n)$ equals $1$ if $n$ is odd and $2$ if $n$ is even, the second item follows from the first. 
\end{proof}
\subsubsection{Global geometry.} \label{subsec:fin-quot-real-curves}
We now study the possible shapes of the connected components of the real locus of a real stacky quotient curve. 
\begin{proposition} \label{prop:top-reallocus-stcurve}
    Let $X$ be a smooth curve over $\RR$. Let $H$ be a finite \'etale group scheme over $\RR$ which acts on $X$ over $\RR$. Let $C$ be a connected component of $\va{[X/H](\RR)}$. Then $C$ is homeomorphic to either an interval in $\RR$ of the form $(0,1), (0,1]$ or $[0,1]$, or to the circle $\mathbb S^1$. If $X$ is proper then only the possilibities $[0,1]$ and $\mathbb S^1$ can occur. 
\end{proposition}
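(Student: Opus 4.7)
The plan is to apply Theorem~\ref{thm:introlemme-prefere} to reduce the proposition to a classification of quotients of connected smooth real curves by finite groups of real-analytic automorphisms, and then to invoke the classical description of finite groups of homeomorphisms of $\mathbb{S}^1$ and of $\RR$.

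First, Theorem~\ref{thm:introlemme-prefere} yields a homeomorphism
\[
\va{[X/H](\RR)} \;\cong\; \coprod_{\gamma} X_\gamma(\RR)/H_\gamma(\RR),
\]
where $\gamma$ ranges over a set of representatives of $\rm H^1(G,H(\CC))$. Each twist $X_\gamma$ is again a smooth curve over $\RR$ and each twist $H_\gamma$ is again a finite \'etale $\RR$-group scheme: both twists become isomorphic to the original object after base change to $\CC$, and smoothness, dimension $1$ and the property of being finite \'etale all descend from $\CC$ to $\RR$. Consequently, for each $\gamma$, $X_\gamma(\RR)$ is a $1$-dimensional real-analytic manifold without boundary, each of whose connected components is homeomorphic either to $\mathbb{S}^1$ or to the open interval $(0,1)$, and $H_\gamma(\RR)$ is a finite group acting continuously on $X_\gamma(\RR)$.

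Next, I would reduce to the analysis of a single connected component. If $C$ is a connected component of $\va{[X/H](\RR)}$, then $C$ lies inside some $X_\gamma(\RR)/H_\gamma(\RR)$, and picking a connected component $D$ of the preimage of $C$ in $X_\gamma(\RR)$, orbit--stabilizer gives a homeomorphism
\[
C \;\cong\; D/\Stab_{H_\gamma(\RR)}(D).
\]
Replacing $\Stab_{H_\gamma(\RR)}(D)$ by its image in $\Homeo(D)$, it suffices to classify the quotients $D/F$ where $D$ is $\mathbb{S}^1$ or $(0,1)\cong\RR$ and $F\subset\Homeo(D)$ is a finite group acting faithfully. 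For $D\cong\mathbb{S}^1$, a standard averaging argument (or the theorem of K\'er\'ekj\'art\'o) produces an $F$-invariant Riemannian metric, whence $F$ embeds as a finite subgroup of $\rm O(2)$ and is therefore either cyclic (rotations, with quotient $\mathbb{S}^1$) or dihedral (containing a reflection, with quotient $[0,1]$). For $D\cong\RR$, any orientation-preserving self-homeomorphism of $\RR$ of finite order is monotonically increasing and hence is the identity, so $F$ is either trivial (quotient $\RR\cong(0,1)$) or is generated by a single orientation-reversing involution with a unique fixed point (quotient $[0,\infty)\cong[0,1)$). This exhausts precisely the four homeomorphism types listed in the proposition.

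Finally, for the proper case, each twist $X_\gamma$ is again proper over $\RR$ (properness descends from $\CC$ to $\RR$), so $X_\gamma(\RR)$ is compact, and every compact connected $1$-manifold without boundary is homeomorphic to $\mathbb{S}^1$. Thus the open-interval case is ruled out and only $\mathbb{S}^1$ and $[0,1]$ can occur as connected components. The main step requiring care is the classification of finite subgroups of $\Homeo(\mathbb{S}^1)$ up to conjugation; this can be handled either via K\'er\'ekj\'art\'o's theorem or, in the present algebraic setting, by using Lemma~\ref{lem:local-groupstructure-curves} to show that point stabilizers of the faithful quotient of $H_\gamma$ on $X_\gamma$ are isomorphic to some $\mu_n$ acting linearly in local coordinates, which is enough to rule out any non-cyclic, non-dihedral possibility.
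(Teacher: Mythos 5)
Your proposal is correct, and the overall skeleton matches the paper's: reduce via Theorem~\ref{thm:introlemme-prefere} to the spaces $X_\gamma(\RR)/H_\gamma(\RR)$, then analyze the quotient of a single connected component $D$ of $X_\gamma(\RR)$ by $\Stab_{H_\gamma(\RR)}(D)$. Where you diverge is in how you carry out that analysis, which in the paper is isolated as Lemma~\ref{lem:isometries-circle}. The paper first treats the proper case by invoking uniformization of Riemann surfaces: $H(\RR)$ acts by biholomorphisms of $X(\CC)$, hence by isometries for the canonical constant-curvature metric, which restricts to a Riemannian metric on the circle $C\subset X(\CC)$ and thereby embeds $\Stab_{H(\RR)}(C)$ into $\mathrm{O}(2)$; the non-proper case is then handled by passing to the smooth compactification, extending the action, and deleting the finitely many boundary points. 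You instead work intrinsically on the real $1$-manifold $D$: for $D\cong\mathbb S^1$ you average a metric (or cite K\'er\'ekj\'art\'o) to land in $\mathrm{O}(2)$, and for $D\cong(0,1)$ you observe directly that any orientation-preserving finite-order homeomorphism of $\RR$ is the identity, so the image of the stabilizer in $\Homeo(D)$ is trivial or order two. This is a genuinely different (and somewhat more elementary) route: it sidesteps uniformization and, more substantively, avoids the compactify-then-delete step and the need to extend the $H$-action to the smooth compactification. The paper's route, on the other hand, yields the invariant metric on all of $X(\CC)$ at once and makes the link to the complex-analytic structure explicit, which fits the global philosophy of the paper. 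Both proofs establish all four possible homeomorphism types and deduce the proper case from compactness in the same way.
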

We actually prove something slighlty more general in the following Lemma \ref{lem:isometries-circle}. Observe that  Proposition \ref{prop:top-reallocus-stcurve} follows from Theorem \ref{thm:introlemme-prefere} and Lemma \ref{lem:isometries-circle}.

\begin{lemma}\label{lem:isometries-circle}
 Let $X$ be a smooth curve over $\RR$. Let $H$ be a finite \'etale group scheme over $\RR$ acting on $X$ over $\RR$. Each connected component of $X(\RR)/H(\RR)$ is homeomorphic to the interval $[0,1] \subset \RR$, the interval $(0,1]$, the interval $(0,1)$, or the circle $\mathbb S^1 \subset \RR^2$. 
\end{lemma}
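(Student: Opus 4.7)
The plan is to pass to an $H(\mathbb{R})$-invariant Riemannian metric on $X(\mathbb{R})$ and reduce to the classification of finite isometry groups of the two model $1$-manifolds $\mathbb{R}$ and $\mathbb{S}^1$. First, because $X$ is a smooth curve over $\mathbb{R}$, the real locus $X(\mathbb{R})$ is a smooth $1$-manifold without boundary, so each of its connected components is homeomorphic to an open interval $(0,1)$ or to the circle $\mathbb{S}^1$. Since $H$ is a finite étale $\mathbb{R}$-group scheme, $H(\mathbb{R})$ is a finite group acting on $X(\mathbb{R})$ by real-analytic diffeomorphisms; averaging any Riemannian metric over $H(\mathbb{R})$, we may assume that $H(\mathbb{R})$ acts by isometries on $X(\mathbb{R})$.

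Next, I would observe that connected components of $X(\mathbb{R})/H(\mathbb{R})$ are in bijection with $H(\mathbb{R})$-orbits on the set $\pi_0(X(\mathbb{R}))$: picking a representative component $C$ in each orbit, the corresponding component of the quotient is $C/\mathrm{Stab}_{H(\mathbb{R})}(C)$, and $\mathrm{Stab}_{H(\mathbb{R})}(C)$ is a finite group acting on $C$ by isometries. It therefore suffices to classify, up to homeomorphism, quotients of the two model components by finite isometry groups.

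If $C\cong(0,1)$, then with the induced metric $C$ is isometric to $\mathbb{R}$, whose only finite subgroups of isometries are the trivial group and $\mathbb{Z}/2$ generated by a reflection (which has a unique fixed point). The quotient is therefore homeomorphic to $(0,1)$ or to the half-open interval $(0,1]$. If $C\cong\mathbb{S}^1$, then $C$ is isometric to some $\mathbb{R}/L\mathbb{Z}$ and $\mathrm{Stab}_{H(\mathbb{R})}(C)$ is a finite subgroup of $O(2)$, which is either cyclic (only rotations) or dihedral (contains an orientation-reversing involution, which has exactly two fixed points on the circle). The quotient is homeomorphic to $\mathbb{S}^1$ in the cyclic case and to $[0,1]$ in the dihedral case. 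If moreover $X$ is proper, then $X(\mathbb{R})$ is compact, so every component is a circle and only $\mathbb{S}^1$ and $[0,1]$ can occur.

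The main obstacle is not depth but care: one must verify that the algebraic action of $H$ really does act by diffeomorphisms on $X(\mathbb{R})$ (so that the averaging trick produces a genuine invariant smooth metric), and one must not overlook the half-open case $(0,1]$, which comes from the fact that the quotient of an open interval by an order-two reflection retains its fixed point as a boundary point while the other end is missing. No subtler input is required.
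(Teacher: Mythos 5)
Your proof is correct and takes a genuinely different, more elementary route than the paper's. The paper first assumes $X$ proper, equips the Riemann surface $X(\CC)$ with its canonical complete constant-curvature metric (via uniformization), observes that $H(\CC)$ acts by isometries, restricts this metric to $C \subset X(\CC)$, and handles the non-proper case by passing to a smooth compactification and deleting the boundary points. You instead work entirely on the real locus: you average an arbitrary Riemannian metric on $X(\RR)$ over $H(\RR)$ and classify finite isometry groups of each component directly, treating the open-interval case on an equal footing with the circle case rather than via compactification. Your approach buys a shorter, self-contained argument that avoids uniformization theory and requires no excursion into the complex locus; the paper's approach is more canonical (no choice of metric to average) and dovetails with the global Riemann-surface geometry the authors already use elsewhere in Section \ref{sec:STforcurves}.

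One small inaccuracy worth flagging: when $C \cong (0,1)$ you assert that $C$ with the averaged metric is isometric to $\RR$. That need not be true — the averaged metric on an open interval can have finite total length, in which case $(C,g)$ is isometric to a bounded open interval rather than to $\RR$. The conclusion you draw from it, however, is correct in every case: for \emph{any} Riemannian metric on a connected $1$-manifold diffeomorphic to $(0,1)$, a nontrivial orientation-preserving isometry has infinite order, an orientation-reversing isometry has order $2$ with a unique fixed point, and any two distinct reflections generate an infinite group, so a finite isometry group is trivial or $\ZZ/2$ generated by a reflection. With this clarification the argument is complete.
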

\begin{proof}
We may and do assume that $H$ acts faithfully on $X$. 

First, assume that $X$ is proper, so that $X(\mathbb R)$ is compact, and let $C$ be a connected component of $X(\RR)$ with stabilizer $\Stab_{H(\RR)}(C)$ in $H(\RR)$. 
We start by proving that 
\begin{equation}\label{eq:connectedcomponent}
C/\Stab_{H(\RR)}(C)\simeq \mathbb S^1 \quad\text{or} \quad C/\Stab_{H(\RR)}(C)\simeq [0,1].
\end{equation}
Recall that every connected Riemann surface $S$ admits a unique complete Riemann metric $g$ with constant curvature being negative (genus $\geq 2$), zero (genus zero) or positive (genus one). 
Moreover, for genus $\geq 2$ the group $\rm{Bihol}(S)$ coincides with the group $\Isom(S,g)^+$ of orientation preserving isometries of the Riemannian manifold $(S,g)$. In genus zero we have, for the subgroup $\PGL_2(\RR) \subset \PGL_2(\CC) = \rm{Bihol}(\PP^1(\CC))$, that $\PGL_2(\RR) = \rm{SO}_3(\RR)$ acts as isometries on $\PP^1(\CC) \simeq S^2$. The automorphism group of any complex elliptic curve preserves its Riemannian metric. 

In particular, as $H$ acts faithfully on $X$, there are natural inclusions
$$\Stab_{H(\RR)}(C)\subset H(\mathbb R)\subset \Isom(X(\CC)) \subset \Homeo(X(\CC)),$$ 
where $\Isom(X(\CC))$ is the group of isometries with respect to the Riemannian metric of $X(\CC)$. 
We endow $C$ with the Riemannian metric induced by the embedding $C \subset X(\CC)$. Then $C$ is a compact one-dimensional Riemannian manifold, and hence isometric to a circle of some length $L$: we have $C \simeq \RR/L\ZZ$ with the standard Riemannian metric. In particular, $\rm{Isom}(C) \simeq \rm{O}(2)$. By the above, $\Stab_{H(\RR)}(C) \subset \rm{Isom}(X(\CC))$, and hence $\Stab_{H(\RR)}(C)  \subset \rm{Isom}(C) \simeq \rm{O}(2)$. So, $\Stab_{H(\RR)}(C)$ is a finite subgroup of $\rm{O}(2) = \rm{Isom}(\mathbb S^1)$ with $\mathbb S^1 = \set{z \in \CC \mid \va{z} = 1}$, hence it is generated by multiplication by some root of unity and, possibly, the standard complex conjugation on $\mathbb S^1$. This yields \eqref{eq:connectedcomponent}.

Let $C_1, \dotsc, C_n$ be the connected components of $X(\RR)$; each $C_i$ is homeomorphic to $\mathbb S^1$. Then $I \coloneqq \set{1, \dotsc, n}$ admits a partition $I = I_1 \sqcup I_2 \sqcup \cdots \sqcup I_k$ with $k \leq n$ such that the $I_j$ are the orbits for the induced action of $H(\RR)$ on $I$. For each $j \in \set{1, \dotsc, k}$, choose an element $i_j \in I_j$. 
Let $H(\RR)_j = \rm{\Stab}_{H(\RR)}(C_{i_j})$ be the stabilizer of $C_{i_j}$ in the group $H(\RR)$. 
Then
\begin{align*}
X(\RR)/H(\RR) = \left(\coprod_{i = 1}^n C_i \right)/H(\RR) = \coprod_{j = 1}^k\left(\left(\coprod_{i \in I_j} C_i \right)/H(\RR) \right)  \simeq \coprod_{j = 1}^k C_{i_j}/H(\RR)_j. 
\end{align*}
Thus, the lemma in the case where $X$ is proper follows from \eqref{eq:connectedcomponent}.

In the general case, consider the smooth projective compactification $X \hookrightarrow Y$ of $X$. The action of $H$ on $X$ extends to an action of $H$ on $Y$, and the natural map $X(\RR)/H(\RR) \to Y(\RR)/H(\RR)$ is an open embedding whose complement is a finite set (possibly empty). By what has already been proved, each connected component of $Y(\RR)/H(\RR)$ is homeomorphic to $[0,1]$ or $\mathbb S^1$. By removing the points in $\Delta(\RR) \subset Y(\RR)$, where $\Delta = Y - X$, we see that each connected component of $X(\RR)/H(\RR)$ is homeomorphic to $[0,1]$, $(0,1]$, $(0,1)$ or $\mathbb S^1$, and we are done. 
\end{proof}
\subsubsection{Map to the coarse moduli space.}\label{sec:maptocoarsmodulispace}
Finally, we study the map from a real stacky curve to its coarse moduli space. 

   Let $X$ be a smooth curve over $\RR$. Let $H$ be a finite $\RR$-group scheme that acts on $X$ over $\RR$, with associated real structure $\sigma \colon H(\CC) \to H(\CC)$. 
   Let $p \colon [X/H] \to X/H = M$ be the coarse moduli space map, with induced map $f \colon \va{[X/H](\RR)} \to (X/H)(\RR) = M(\RR)$.
    \begin{lemma} \label{lem:topXR-faithful}
    Let $C \subset M(\RR)$ be a connected component. The following holds.
    \begin{enumerate}
        \item \label{IT:IT:0} The map $f \colon \va{[X/H](\RR)} \to M(\RR)$ is closed.
        \item \label{IT:IT:1}
        If $H$ acts faithfully on $X$, then the map $f^{-1}(C)\to C$ is surjective. 
        \item \label{IT:IT:2}Assume that $H$ acts faithfully on $X$, and that for each $m \in C$, we have $h^1(G,m) = 1$, with $h^1(G,m)$ as in Definition \ref{def:h1G}. Then $f^{-1}(C) \to C$ is a homeomorphism. 
    \end{enumerate}
        \end{lemma}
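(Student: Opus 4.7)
I will treat the three items in order, each following from results already in the paper plus one density argument for (\ref{IT:IT:1}).

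For item (\ref{IT:IT:0}), this is an immediate instance of Corollary \ref{cor:coarse-map-closed} applied to the separated Deligne--Mumford stack $[X/H]$ of finite type over $\RR$, whose coarse moduli space is $X/H = M$.

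For item (\ref{IT:IT:1}), the strategy is to show that the image of $f$ equals $M(\RR)$ by combining closedness from item (\ref{IT:IT:0}) with density on the unramified locus. Because $H$ acts faithfully on the smooth curve $X$, the locus $R \subset X(\CC)$ of points with non-trivial $H(\CC)$-stabilizer is finite; hence its image $\overline R \subset M$ is a finite set of branch points. The quotient $M = X/H$ is itself a smooth curve, since finite quotient singularities in dimension one are smooth in characteristic zero, so $M(\RR)$ is a $1$-dimensional manifold without isolated points, and $M(\RR) \setminus \overline R$ is dense in every connected component $C$. For any $m \in M(\RR) \setminus \overline R$, the scheme-theoretic fiber $P := X \times_M \Spec(\RR)$ of $X \to M$ at the real point $m$ is an $H$-torsor over $\RR$, so by Lemma \ref{lem:bijectiontorsor} it is isomorphic to $P_\gamma$ for some $\gamma \in \rm H^1(G, H(\CC))$, and the resulting $H$-equivariant morphism $P_\gamma \to X$ defines a real object of $[X/H]$ whose image under $f$ is $m$. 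The image of $f$ therefore contains $C \setminus \overline R$, which is dense in $C$, and is closed in $C$ by item (\ref{IT:IT:0}) (using that $C$ is closed in $M(\RR)$). A dense closed subset of the connected set $C$ must equal $C$, giving surjectivity.

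For item (\ref{IT:IT:2}), Proposition \ref{prop:fiber-H1} combined with the hypothesis $h^1(G,m) = 1$ for every $m \in C$ implies that each fiber of $f$ over $C$ consists of a single point, so together with surjectivity from item (\ref{IT:IT:1}) the map $f^{-1}(C) \to C$ is a continuous bijection. Since $M(\RR)$ is locally connected, $C$ is both open and closed in $M(\RR)$, so $f^{-1}(C)$ is closed in $\va{[X/H](\RR)}$, and the restriction of the closed map $f$ (from item (\ref{IT:IT:0})) to $f^{-1}(C) \to C$ is again closed. A continuous closed bijection between locally compact Hausdorff spaces is a homeomorphism.

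The main obstacle I anticipate is item (\ref{IT:IT:1}): at the finitely many real branch points $m \in \overline R \cap M(\RR)$, a direct Galois-descent argument need not produce a real lift in the stack, since the cocycle condition $\sigma(h)h = e$ relating $\sigma$ and a chosen $h$ with $\sigma(x) = hx$ is not in general attainable by modifying the lift $x$ within its $H(\CC)$-orbit. The density strategy circumvents this by handling such points only implicitly, through the closedness of $f$.
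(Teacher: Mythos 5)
Your proof is correct and follows essentially the same route as the paper's: item (\ref{IT:IT:0}) is quoted from Corollary \ref{cor:coarse-map-closed}, item (\ref{IT:IT:1}) combines the closedness from (\ref{IT:IT:0}) with density of the image on the unbranched locus, and item (\ref{IT:IT:2}) combines (\ref{IT:IT:0}), (\ref{IT:IT:1}) and Proposition \ref{prop:fiber-H1}. The only difference is that the paper leaves the phrase ``its image contains a dense open subset'' unjustified, whereas you spell out the torsor argument that produces real objects of $[X/H]$ above each unramified real point of $M$.
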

        \begin{proof} 
    Item \ref{IT:IT:0} follows from Theorem \ref{thm:coarse-map-closed}. 
          To prove item \ref{IT:IT:1}, note that since the action is faithful and $X$ is smooth, $f \colon \va{[X/H](\RR)} \to M(\RR)$ is surjective and closed; indeed, $f$ is closed by item \ref{IT:IT:0} and its image contains a dense open subset, so $f$ is surjective. In particular, item \ref{IT:IT:1} follows. Moreover, for $m \in C$, we have $\# f^{-1}(m) = h^1(G,m)$, see Proposition \ref{prop:fiber-H1}. Hence item \ref{IT:IT:2} follows from items \ref{IT:IT:0} and \ref{IT:IT:1}.  
        \end{proof}
\begin{proposition} \label{prop:topXR-faithful}
    Assume that $H$ acts faithfully on $X$ over $\RR$. Let $C \subset M(\RR)$ be a connected component and let 
 $\mr S = \set{x_1, \dotsc, x_n}\subset C$ be the finite set of points such that $h^1(G,x_i)\neq 1$. Assume that $\mr S\neq \emptyset$. 
        \begin{enumerate}
         \item Assume that  $C$ is an open interval, and fix a homeomorphism $\varphi\colon  C\xrightarrow{\sim} (0,1)$. There exists an homeomorphism $$
    \psi \colon f^{-1}(C) \xlongrightarrow{\sim} (0,y_1] \coprod [y_1, y_2] \coprod \cdots \coprod [y_{n-1},y_n] \coprod [y_n,1)$$
        such that the following diagram commutes:
        \[
        \xymatrixcolsep{5pc}
        \xymatrix{
        f^{-1}(C) \ar[d] \ar[r]^-\psi & (0,y_1] \sqcup [y_1, y_2] \coprod \cdots \coprod [y_{n-1},y_n] \coprod [y_n,1) \ar[d] \\
        C \ar[r]^-{\varphi} & (0,1),
        }
        \]
        where the vertical arrows are the canonical ones and $y_i=\varphi(x_i)$. 
\item Assume that $C$ is a circle, and fix a homeomorphism $\varphi \colon C \xrightarrow{\sim} \mathbb S^1$ with $\varphi(x_1)=1$. Then there exists a homeomorphism $$\psi \colon f^{-1}(C) \xlongrightarrow{\sim} [0, \theta_1] \coprod [\theta_{1},\theta_2]\coprod \cdots \coprod [\theta_{n-1}, 1]$$
  such that the following diagram commutes:
        \[
        \xymatrixcolsep{5pc}
        \xymatrix{
        f^{-1}(C) \ar[d] \ar[r]^-\psi & [0, \theta_1] \coprod [\theta_{1},\theta_2]\coprod \cdots \coprod [\theta_{n-1}, 1] \ar[d]^-{x\mapsto e^{2\pi ix}} \\
        C \ar[r]^-{\varphi} & \mathbb S^1,
        }
        \]
        where the vertical arrows are the canonical ones and $\varphi(x_j) = e^{i \theta_j}$ for $j\neq 1$. 
    \end{enumerate}
\end{proposition}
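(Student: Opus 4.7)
The plan is to combine the closed-map fact from Lemma \ref{lem:topXR-faithful} with the classification of connected components from Proposition \ref{prop:top-reallocus-stcurve}, and then to carry out a combinatorial assembly driven by the 2-point fibers over $\mr S$. By Lemma \ref{lem:topXR-faithful}, the map $f \colon f^{-1}(C) \to C$ is closed, surjective, and restricts to a homeomorphism above $C^* \coloneqq C \setminus \mr S$; moreover, $\# f^{-1}(x_i) = 2$ for each $x_i \in \mr S$ by Proposition \ref{prop:fiber-H1} and Lemma \ref{lem:local-groupstructure-curves}. Since $C$ is both open and closed in $M(\RR)$, the preimage $f^{-1}(C)$ is a union of connected components of $\va{[X/H](\RR)}$, and by Proposition \ref{prop:top-reallocus-stcurve} each such component is homeomorphic to $(0,1)$, $(0,1]$, $[0,1]$, or $\mathbb S^1$.

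Next I would rule out circle components. If $K \cong \mathbb S^1$ were a component, then $f|_K$ is continuous with compact connected image in $C$; this image cannot be a single point (else $K$ would lie inside a two-point fiber) nor a nontrivial proper closed subinterval or arc (else the intermediate value theorem produces points of $C^*$ with at least two preimages in $K$, contradicting injectivity of $f|_{f^{-1}(C^*)}$). In Case 2, the image also cannot be all of $C \cong \mathbb S^1$: by the same injectivity argument $f|_K$ would have degree $\pm 1$, hence be a homeomorphism, and the remaining preimage of each $x_i$ would have to be an isolated point, i.e., a singleton component, forbidden by Proposition \ref{prop:top-reallocus-stcurve}. For the remaining interval components, $f|_K$ is continuous on an interval and injective on the dense open set $K \setminus f^{-1}(\mr S)$; a monotonicity argument (non-monotonicity combined with the intermediate value theorem would produce multiplicity $\geq 2$ on a dense subset) shows $f|_K$ is strictly monotone, hence a homeomorphism onto its image $I_K$, which is closed and connected in $C$. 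In Case 2 this forces $K \cong [0,1]$ and $I_K$ a closed arc; in Case 1, an analogous argument rules out $K \cong (0,1)$ (such a $K$ would map homeomorphically onto $C$, and the remaining preimages of $\mr S$ would again have to be forbidden singleton components), leaving only the types $K \cong [0,1]$ and $K \cong (0,1]$, the latter reserved for components whose image abuts an end of $C$.

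Finally, one assembles the global picture. Let $K_1, \dotsc, K_m$ be the connected components of $f^{-1}(C)$ with images $I_j = f(K_j)$. The $I_j$ cover $C$ and pairwise meet only in $\mr S$, with each $x_i$ lying in exactly two of them by the fiber count. In Case 1, ordering the $I_j$ along $C \cong (0,1)$ and writing $y_i \coloneqq \varphi(x_i)$ yields $I_1 = (0, y_1]$, $I_j = [y_{j-1}, y_j]$ for $2 \leq j \leq n$, and $I_{n+1} = [y_n, 1)$; the homeomorphisms $K_j \to I_j$ glue to give the desired $\psi$. In Case 2, the $n$ closed arcs $I_j$ partition $\mathbb S^1$, meeting at the points of $\mr S$, and after relabeling cyclically with $\varphi(x_{j+1}) = e^{i\theta_j}$ they become $[0, \theta_1], [\theta_1, \theta_2], \dotsc, [\theta_{n-1}, 1]$. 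The main obstacle is the topological step: excluding circle components and establishing injectivity and monotonicity of $f|_K$ on each interval component. Once that is done, the remaining combinatorial assembly is forced by the fiber count and the covering property.
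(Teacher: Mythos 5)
Your proof is correct and follows essentially the same approach as the paper: start from the fiber count (Proposition~\ref{prop:fiber-H1} together with Lemma~\ref{lem:local-groupstructure-curves}), the closedness and surjectivity of $f$ (Lemma~\ref{lem:topXR-faithful}), and the classification of connected components (Proposition~\ref{prop:top-reallocus-stcurve}). The paper compresses everything after that into the single sentence ``The proposition follows from this and from Proposition~\ref{prop:top-reallocus-stcurve},'' whereas you spell out the implicit topological bookkeeping --- ruling out circle components and full-image components, establishing strict monotonicity of $f$ on each interval component, and assembling the closed subintervals --- all of which is sound.
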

\begin{proof}
By Lemma \ref{lem:topXR-faithful}, the map $f \colon f^{-1}(C) \to C$ is surjective. 
By Proposition \ref{prop:fiber-H1} and Lemma \ref{lem:local-groupstructure-curves}, for each connected component $K \subset M(\RR)$, the map $f^{-1}(K) \to K$ is an isomorphism outside $\mr S \subset K$ and has two fibers above each point of $\mr S$. The proposition follows from this and from Proposition \ref{prop:top-reallocus-stcurve}. 
\end{proof}
The content of Proposition \ref{prop:topXR-faithful} is illustrated in Figure \ref{fig: f-1C->C} below. The picture on the left shows the case in which $C$ is a circle: $C$ is depicted as the internal circle and $f^{-1}(C)$ as the union of the three external arcs $[0,\theta_1]$, $[\theta_1,\theta_2]$ and $[\theta_2,1]$. The picture on the right shows the case in which $C$ is a open interval: $C$ is depicted as the bottom interval and $f^{-1}(C)$ as the disjoint union of the three upper segments $(0,y_1]$, $[y_1,y_2]$ and $[y_2,1)$.

 \begin{center}
\begin{figure}[h!]
\begin{center}
\begin{picture}(0,130)
\put(36,25){$0$}
\put(82,25){$y_1$}
\put(153,25){$y_2$}
\put(210,25){$1$}

\put(-126,30){$1$}
\put(-105,75){$e^{i \theta_2}$}
\put(-155,75){$e^{i \theta_1}$}
\put(-140,0){$C\simeq \mathbb S^1$}
\put(100,0){$C\simeq (0,1)$}
\put(-170,15){\includegraphics[width=0.2\textwidth]{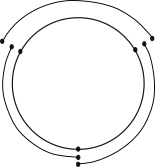}}
\put(30,15){\includegraphics[width=0.4\textwidth]{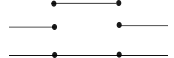}}
\end{picture}
\caption{The morphism $f^{-1}(C)\rightarrow C$.}
\label{fig: f-1C->C}
\end{center}
\end{figure}
\end{center}
\subsection{Smith--Thom for real stacky curves.}\label{sec:proofsmiththomcurve} We are now ready to proceed with the:
\begin{proof}[Proof of Theorem \ref{thm:introST-curves}]
The action of $H$ on $X$ corresponds to a homomorphism
\begin{align}\label{align:action-hom}
H \to \underline{\Aut}_\RR(X),
\end{align}
where the latter denotes the automorphism group scheme of $X$ over $\RR$. Let $K \subset H$ be the kernel of \eqref{align:action-hom}, and let $Q = H/K$ be the quotient of $H$ by $K$. The canonical map $Q \to \underline{\Aut}_\RR(X)$ is a closed immersion, and $Q(\CC)$ acts faithfully on $X(\CC)$. 
Let $[X/H] \to M$ be the coarse moduli space; we have $M(\CC) = X(\CC)/H(\CC) = X(\CC)/Q(\CC)$. 
\\
\\
\textbf{Step 1:} \emph{If $H$ is abelian, and if the Smith--Thom inequality \eqref{align:inequality-ST-stacks} holds for the quotient stack $[X/Q]$, then it also holds for $[X/H]$.}
\begin{proof}[Proof of Step 1]
Assume the 
Smith--Thom inequality \eqref{align:inequality-ST-stacks} for $[X/Q]$. 
Consider the canonical map 
\begin{align}\label{align:themapg}
g \colon \va{[X/H](\RR)} \to \va{[X/Q](\RR)}.
\end{align}
We claim that \begin{align} \label{align:we have}\# g^{-1}(p)\leq \#K(\mathbb C) \quad \quad \text{ for any 
$p \in \va{[X/Q](\RR)}$.
}
\end{align} To prove this, choose a complete set of representatives $\rm R_H \subset \rm Z^1(G,H)$ for $\rm H^1(G,H)$. Let $\rm I_Q = m(\rm{R}_H) \subset \rm Z^1(G,Q)$ be the image of $\rm R_H$ in $\rm Z^1(G,Q)$ under the natural map $m \colon \rm Z^1(G,H) \to \rm Z^1(G,Q)$, and extend $\rm I_Q$ to a complete set of representatives $\rm I_Q \subset \rm R_Q \subset \rm Z^1(G,Q)$  for $\rm H^1(G,Q)$. By construction, the map $m \colon \rm Z^1(G,H) \to \rm Z^1(G,Q)$ restricts to a map $m\colon \rm R_H \to \rm  R_Q$. By Theorem \ref{thm:introlemme-prefere}, of which we retain the notation,  we have a commutative diagram 
\[
\xymatrix{
\va{[X/H](\RR)} \ar[d]^{\wr}  \ar[r]^-g & \va{[X/Q](\RR)} \ar[d]^{\wr}\\
\coprod_{\gamma \in \rm  R_H} X_{\gamma}(\RR)/H_{\gamma}(\RR) \ar[r] & \coprod_{\mu \in \rm  R_Q} X_{\mu}(\RR)/Q_{\mu}(\RR)
}
\]
in which the vertical arrows are homeomorphisms and where the map on the bottom is induced by the map $m\colon\rm R_H \to \rm  R_Q$ and the quotient morphisms
$X_{\gamma}(\RR)/H_{\gamma}(\RR)\rightarrow X_{m(\gamma)}(\RR)/Q_{m(\gamma)}(\RR).$
To prove the claim, we may assume that $p = g(q)$ for some $q \in \va{[X/H](\RR)}$. Let $\gamma \in \rm R_H$ such that $q \in X_\gamma(\RR)/H_\gamma(\RR)$. 
The exact sequence of abelian groups
\[
0 \to K_{\gamma}(\RR) \to H_{\gamma}(\RR) \to Q_{m(\gamma)}(\RR) \to \rm  H^1(G,K_{\gamma}) \to \rm  H^1(G,H_{\gamma}) \to \rm  H^1(G,Q_{m(\gamma)}),
\]
where $K_{\gamma}\coloneqq \Ker(H_{\gamma}\rightarrow Q_{m(\gamma)})$, 
shows that $\#g^{-1}(p)\leq \#\rm  H^1(G, K_{\gamma}(\mathbb C))\leq \#K(\mathbb C)$. This proves \eqref{align:we have}.

By Proposition \ref{prop:top-reallocus-stcurve}, each connected component of $\va{[X/H](\RR)}$ or $\va{[X/Q](\RR)}$ is homeomorphic to a circle or an interval. Let $C$ be a connected component of $\va{[X/Q](\RR)}$. If $C$ is a circle, then, by \eqref{align:we have}, the inverse image $g^{-1}(C)$ consists of at most $\# K$ connected components which are circles or intervals. This implies that 
\begin{align} \label{align:CIRCLE}
h^\ast(g^{-1}(C)) \leq 2 \cdot \# K(\mathbb C) \quad \quad \text{ if $C$ is a circle.}
\end{align}
Note that every surjective continuous map from a circle to an interval has the property that the fiber above any point in the interior of the interval has cardinality at least two. To prove this, take a point $p$ not on boundary of the interval. If $p$ had only one pre-image, then, eliminating the point, we would get a surjective morphism from a connected space, a circle minus one point, to a non-connected one, an interval minus one point not on the boundary. This is not possible, so that $p$ has at least two pre-images.

Thus, by \eqref{align:we have}, if the connected component $C$ of $\va{[X/Q](\RR)}$ is an interval, then $g^{-1}(C)$ is an union of $a$ intervals and $b$ circles with $a+2b\leq \#K$. Hence, we have: 
\begin{align} \label{align:INTERVAL}
h^\ast(g^{-1}(C)) \leq \# K(\mathbb C) \quad \quad \text{ if $C$ is an interval.}
\end{align}
Therefore, we have: 
\begin{align*}
h^\ast(\va{[X/H](\RR)}) &= \sum_{C \in \pi_0(\va{[X/Q](\RR)})}h^\ast\left( g^{-1}(C) \right)\\
& = 
\sum_{C \text{ circle}}h^\ast\left( g^{-1}(C) \right)
+
\sum_{C \text{ interval}}h^\ast\left( g^{-1}(C) \right) \\
& 
\stackrel{\text{(a)}}{\leq}  
\sum_{C \text{ circle}} 2 \cdot \# K(\CC)
+
\sum_{C \text{ interval}} \# \rm   K(\CC) \\
& = 
\# K(\CC) \cdot h^\ast(\va{[X/Q](\RR)}) \\
& \stackrel{\text{(b)}}{\leq}  \# K(\CC) \cdot h^\ast(\vert \mathcal I_{[X/Q]}(\mathbb C)\vert) \\
& \stackrel{\text{(c)}}{=} h^\ast(\vert \mathcal I_{[X/H]}(\mathbb C)\vert),
\end{align*}
where (a) holds by \eqref{align:CIRCLE} and \eqref{align:INTERVAL}, (b) by the fact that the Smith--Thom inequality \eqref{align:inequality-ST-stacks} holds for $[X/Q]$ by assumption, while $(c)$ holds by Proposition \ref{prop:betti-nr-inertia-abelian}, which we can apply since $H$ is abelian. This proves Step 1. 
\end{proof}
Let $\Delta \subset X(\CC)/Q$ be the branch locus of the quotient map $q \colon X(\CC) \to X(\CC)/Q$. For each $x \in \Delta$ choose an element $y_x \in X(\CC)$ such that $q(y_x) = x$. Define
\[
\Delta' \coloneqq \set{x \in \Delta \cap (X/Q)(\RR) \mid h^1(G,x) > 1},
\]
where $h^1(G,x) = \# \rm  H^1(G, H_y(\CC))$ for some $y \in q^{-1}(x)$.\\
\\
\textbf{Step 2:} \emph{The Smith--Thom inequality \eqref{align:inequality-ST-stacks} holds when $H = Q$, that is, when the action of $H$ on $X$ over $\RR$ is faithful.}
\begin{proof}
Assume that $H = Q$ acts faithfully on $X$. By Lemma \ref{lem:topXR-faithful}, the natural map $f \colon \va{\ca X(\RR)} \to M(\RR)$ is surjective. Let $C \subset (X/Q)(\RR)$ be a connected component which is homeomorphic to a circle.   Since the action of $H$ is faithful, by Proposition \ref{prop:topXR-faithful}, $f^{-1}(C)$ is homeomorphic to a circle if $h^1(G,x) = 1$ for each $x \in C$, and $f^{-1}(C)$ is homeomorphic to the union of $\#(C \cap \Delta')$ intervals if $\Delta' \cap C \neq \emptyset$.   
    In particular, we have:
    \[
    h^\ast\left( f^{-1}(C) \right) =\begin{cases}
2 & \text{ if } \quad C \cap \Delta'= \emptyset, 
\\
    \# ( C \cap \Delta') & \text{ if } \quad  C \cap \Delta' \neq \emptyset.  
\end{cases}
    \]
 Let $I \subset (X/Q)(\RR)$ be a connected component which is homeomorphic to the open interval $(0,1)$. 
    Since the action of $H$ is faithful, by Proposition \ref{prop:topXR-faithful}, $f^{-1}(I)$ is homeomorphic to the union of  $\# ( I \cap \Delta') + 1$ intervals. In particular, we have $h^\ast( f^{-1}(I))=\# ( I \cap \Delta') + 1$. Therefore, we have:
\begin{align*}
h^\ast(\va{\ca X(\RR)}) &= \sum_{C \in \pi_0(M(\RR)) \text{ circle}} h^\ast(f^{-1}(C)) + \sum_{I \in \pi_0(M(\RR)) \text{ interval}} h^\ast(f^{-1}(C))  \\
& \stackrel{\text{(a)}}{=}  
\sum_{C \cap \Delta' = \emptyset} 2 + \sum_{C \cap \Delta' \neq \emptyset} \#(C \cap \Delta') + \sum_{I} \left( \#(C \cap \Delta') + 1 \right) \\
& = 
\left(\sum_{C \cap \Delta' = \emptyset} 2 + \sum_{I} 1\right) +
\left(\sum_{C \cap \Delta' \neq \emptyset} \#(C \cap \Delta')  +
\sum_{I \cap \Delta' \neq \emptyset} \#(C \cap \Delta') \right)
\\
& \leq h^\ast((X/Q)(\RR)) + \sum_{x \in \Delta} 1
 \\
 & \stackrel{\text{(b)}}{\leq}  h^\ast((X/Q)(\RR)) + \sum_{x \in \Delta} \left(\# (H_{y_x}(\CC)/H_{y_x}(\CC))- 1\right) \\
 & \stackrel{\text{(c)}}{\leq} h^\ast((X/Q)(\CC)) + \sum_{x \in \Delta} \left(\# (H_{y_x}(\CC)/H_{y_x}(\CC)) - 1\right)\\
 & \stackrel{\text{(d)}}{=} h^\ast(\vert \mathcal I_{\mathcal X}(\CC)\vert).
\end{align*}
Here, (a) follows from the previous discussion, (b) from the fact that we have $2 \leq \# (H_{y_x}(\CC)/H_{y_x}(\CC))$, (c) from the classical Smith--Thom inequality (\ref{align:inequality-ST}) for $X/Q$, and, finally, (d) from Proposition \ref{prop:betti-nr-inertia-abelian}. This proves Step 2. 
\end{proof}
By combining Steps 1 and 2, Theorem \ref{thm:introST-curves} follows.  
\end{proof}




\section{Topology of split gerbes over a real variety}\label{sec:classifyingstack}

Let \( U \) be a geometrically connected real variety with \( U(\mathbb{R}) \neq \emptyset \), and let \( H \to U \) be a finite étale group scheme, equipped with the trivial action on \( U \) over $U$. In this section, we describe how to compute the topology of \( \lvert [U/H](\mathbb{R}) \rvert \) by comparing it with \( U(\mathbb{R}) \). This analysis leads to the proof of Theorem \ref{thm:topcovering[U/H]} as well as Corollaries \ref{cor:gerbesovercurves} and \ref{cor:gerbesovercurvesconcrete}.

In Section \ref{sec:notation U/H}, we introduce notation that will be used throughout Section \ref{sec:classifyingstack}. The main work toward proving Theorem \ref{thm:topcovering[U/H]} is carried out in Section \ref{sec:topalgU/H}. In Section \ref{sec:proofU/H}, we assemble all the ingredients and prove Theorem \ref{thm:topcovering[U/H]} along with Corollaries \ref{cor:gerbesovercurves} and \ref{cor:gerbesovercurvesconcrete}.

\subsection{Notation and description of the complex inertia.}\label{sec:notation U/H}
Let \( U \) be a geometrically connected real variety with \( U(\mathbb{R}) \neq \emptyset \), and let \( H \to U \) be a finite étale group scheme, equipped with the trivial action on \( U \) over $U$. 
For $x\in U(\mathbb R)$, we let $H_x$ denote the fiber of $H \to U$ over $x$; thus $H_x$ is a finite \'etale group scheme over $\RR$. We let $\overline x \colon \Spec(\CC) \to U$ be the geometric point associated to $x$, and define $H_{\overline x} = H \times_{U}\overline x$. Thus, $H_{\overline x}$ is the constant group scheme over $\CC$ associated to the finite group $H_{x}(\mathbb C)$ which, by abuse of notation, we will also denote by $H_{\overline x}$. The finite group $H_{\overline x}$ is endowed with an action of $G$ whose associated involution we denote by  
\begin{align}\label{align:involution-G-Hx}
    \sigma_x \colon H_{\overline x}\rightarrow H_{\overline x}.
\end{align}
The natural map $U\rightarrow \mathcal X$ is a section of the coarse moduli space map $[U/H]\rightarrow U/H = U$, so that the map  $f\colon \vert [U/H](\mathbb R)\vert \rightarrow U(\mathbb R)$ is surjective. Proposition \ref{prop:inertia-description} implies the following description of the complex locus of the inertia stack $\ca I_{[U/H]}$ of $[U/H]$. 
\begin{lemma}\label{lemma:inertia:H/U}
In the above notation, we have
$$\vert \mathcal I_{[U/H]}(\mathbb C)\vert \simeq H(\mathbb C)/H(\mathbb C) \coloneqq  \{(p,h)\in U(\mathbb C)\times H(\mathbb C) \mid h \in H_p(\CC)\}/_\sim $$
where $\sim$ denotes the equivalence relation $(p,h)\sim (p',h')$  if $p=p'$ and $h$ is conjugate to $h'$ in $H_p(\mathbb C)$.
In particular, when $H$ is abelian, one has
$\vert \mathcal I_{[U/H]}(\mathbb C)\vert \simeq H(\mathbb C).$ \hfill \qed
\end{lemma}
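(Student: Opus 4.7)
The plan is to deduce the lemma as a direct application of Proposition \ref{prop:inertia-description}, specialized to the situation where the action is trivial.

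First, I would reduce to the complex setting by base changing everything to $\CC$. Taking $S = U_{\CC}$ and $f = \id \colon U_\CC \to U_\CC$ in Proposition \ref{prop:inertia-description}, with the group scheme $H_\CC \to U_\CC$ acting trivially on $U_\CC$ over itself, Proposition \ref{prop:inertia-description} yields
\begin{align*}
\va{\ca I_{[U/H]}(\CC)}= \set{(p,h) \in U(\CC) \times H(\CC) \mid h \in \Stab_{H_{p}(\CC)}(p)} /_{\sim},
\end{align*}
where $(p,h)\sim (gp,ghg^{-1})$ for $g \in H_p(\CC)$.

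The next step is to simplify using the hypothesis that $H$ acts trivially on $U$ over $U$. This hypothesis forces $gp = p$ and $\Stab_{H_p(\CC)}(p) = H_p(\CC)$ for every $p \in U(\CC)$ and $g \in H_p(\CC)$. So the above presentation reduces to the set of pairs $(p,h)$ with $h \in H_p(\CC)$, modulo the equivalence relation $(p,h) \sim (p,ghg^{-1})$ with $g \in H_p(\CC)$, which is exactly the stated description of $H(\CC)/H(\CC)$.

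Finally, in the abelian case, conjugation by any element of $H_p(\CC)$ is trivial, so the equivalence relation on $H(\CC)/H(\CC)$ collapses to the identity, giving a canonical bijection $\va{\ca I_{[U/H]}(\CC)} \simeq H(\CC)$. The identifications are compatible with the natural topologies, since they come from a morphism of stacks, so the bijections upgrade to homeomorphisms without additional work. No serious obstacle is anticipated; the content is purely bookkeeping from Proposition \ref{prop:inertia-description}.
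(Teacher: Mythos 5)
Your proof is correct and follows exactly the route the paper itself takes: the lemma is stated with an immediate $\qed$ precisely because it is a direct specialization of Proposition \ref{prop:inertia-description} (with $S = X = U_\CC$, $f = \id$, and the trivial action), and your simplification of the stabilizers and the equivalence relation under the triviality hypothesis matches what the paper intends.
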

\subsection{Topological and algebraic fundamental groups}\label{sec:topalgU/H}
\subsubsection{Action of fundamental groups.} Fix $p\in U(\mathbb R)$, and write $C$ for the connected component of $U(\mathbb R)$ containing $p$.
The finite \'etale group scheme $H \to U$ corresponds to an action $\pi^{\et}_1(U,\overline p)$ on $H_{\overline p}$, 
$$\rho_p \colon \pi^{\et}_1(U,\overline p)\rightarrow \Aut(H_{\overline p}),$$ 
which compatible with the group structure of $H_{\overline p}$. Here, $\pi^{\et}_1(U,\overline p)$ is the \'etale fundamental group of $U$ at the geometric point $\overline p$. 

Since $U$ is geometrically connected and $G = \pi_1^{\et}(\Spec(\RR))$, the natural morphisms $U_{\mathbb C}\rightarrow U\rightarrow \Spec(\mathbb R)$ induce a short exact sequence of groups
\begin{equation}\label{eq:homotopy exact sequence}
	1\rightarrow \pi^{\et}_1(U_{\mathbb C},\overline p)\rightarrow \pi^{\et}_1(U,\overline p)\rightarrow G\rightarrow 1.
\end{equation}
Restricting $\rho_p$ to $\pi^{\et}_1(U_{\mathbb C},\overline p)$, we get an action of $\pi^{\et}_1(U_{\mathbb C},\overline p)$ on $H_{\overline p}$,
$$\rho^{\mathbb C}_{p}\colon \pi^{\et}_1(U_{\mathbb C},\overline p)\rightarrow \Aut(H_{\overline p}),$$
which corresponds to the \'etale $U_{\mathbb C}$-group scheme $H_{\mathbb C}\rightarrow U_{\mathbb C}$. 
Recall that $\pi^{\et}_1(U_{\mathbb C},\overline p)$ identifies with the profinite completion of usual fundamental group $\pi_1(U(\mathbb C),p)$ so that, in particular, there is a natural map
$\pi_1(U(\mathbb C),p)\rightarrow \pi^{\et}_1(U_{\mathbb C},\overline p)$. We denote again by $$\rho^{\mathbb C}_{p} \colon \pi_1(U(\mathbb C), p) \to \Aut(H_{\overline p})$$ the restriction of  $\rho^{\mathbb C}_{p}\colon \pi^{\et}_1(U_{\mathbb C},\overline p)\rightarrow \Aut(H_{\overline p})$ along the map $\pi_1(U(\mathbb C), p) \to \pi_1^\et(U_\CC, \overline p)$; this representation of $\pi_1(U(\mathbb C), p)$ corresponds to the topological covering  $H(\mathbb C)\rightarrow U(\mathbb C)$. 

Viewing $p$ as a morphism of schemes $p \colon \Spec(\mathbb R)\rightarrow U$, we get a morphism  $\pi_1(p)\colon  G=\pi_1^\et(\Spec(\mathbb R))\rightarrow \pi^{\et}_1(U,\overline p)$
which splits (\ref{eq:homotopy exact sequence}), and hence yields an isomorphism
\begin{equation}\label{eq:semidirect}
\pi^{\et}_1(U,\overline p)\simeq \pi^{\et}_1(U_{\mathbb C},\overline p)\rtimes G. 
\end{equation}
Hence, viewing $G$ as a subgroup $G \subset \pi_1^\et(U,\overline p)$ via the isomorphism \eqref{eq:semidirect}, this induces an action of $G = \langle \sigma \rangle $ on $\pi^{\et}_1(U_{\mathbb C},\overline p)$ by the usual formula $\sigma \cdot \alpha = \sigma \alpha \sigma^{-1}$ for $\alpha \in \pi^{\et}_1(U_{\mathbb C},\overline p)$. This action is compatible with the natural action of $G$ on $\pi_1(U(\CC), p)$ defined as follows: for $\alpha\in \pi_1(U(\mathbb C),p)$, we have $\sigma \cdot \alpha = \sigma_{U\ast}(\alpha)$.

Restricting $\rho_p$ to $G$ via $\pi_1(p)$, we get an action of $G$ on $H_{\overline p}$ which corresponds to the natural involution $\sigma_p$ on $H_{\overline p}$, see \eqref{align:involution-G-Hx}. Consider the morphism $\pi_1(C,p)\rightarrow \pi_1(U(\mathbb C),p)$ induced by the embedding $C \subset U(\CC)$, and define \begin{align}\label{align:rhop-action}\rho_p^{C} \colon \pi_1(C,p) \to \Aut(H_{\overline p})\end{align} 
as the composition of $\rho_p^{\mathbb C}$ with $\pi_1(C,p) \to \pi_1(U(\CC),p)$. 

\begin{lemma}\label{lem:actionrestriction}
The above action \eqref{align:rhop-action} of $\pi_1(C,p)$ on $H_{\overline p}$ commutes with $\sigma_p$, in the sense that $\sigma_p(\gamma \cdot x) = \gamma \cdot \sigma_p(x)$ for $\gamma \in \pi_1(C,p)$ and $x \in H_{\overline p}$. In particular, it preserves the subset $\rm Z^1(G, H_{\overline p}) = \set{x \in H_{\overline p} \mid x \cdot \sigma_p(x) = e} \subset H_{\overline p}$ and the induced action of $\pi_1(C,p)$ on $\rm Z^1(G, H_{\overline p})$ descends to an action of $\pi_1(C,p)$ on $\rm H^1(G, H_{\overline p})$. 
\end{lemma}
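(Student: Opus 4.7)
The plan is to lift the commutation claim to the étale fundamental group $\pi_1^{\et}(U, \overline p)$ using the semidirect product decomposition \eqref{eq:semidirect}, and then push it forward along the representation $\rho_p$. Concretely, I would first verify that any $\gamma \in \pi_1(C,p)$, viewed as an element $\alpha \in \pi_1^{\et}(U_\CC, \overline p)$ via the composition $\pi_1(C,p) \to \pi_1(U(\CC),p) \to \pi_1^{\et}(U_\CC, \overline p)$, satisfies $\sigma \alpha \sigma^{-1} = \alpha$ inside $\pi_1^{\et}(U, \overline p)$. Once this is established, applying the group homomorphism $\rho_p$ and using $\rho_p(\sigma) = \sigma_p$ and $\rho_p(\alpha) = \rho_p^\CC(\alpha)$ yields $\sigma_p \circ \rho_p^\CC(\alpha) = \rho_p^\CC(\alpha) \circ \sigma_p$ in $\Aut(H_{\overline p})$, which is precisely the relation $\sigma_p(\gamma \cdot x) = \gamma \cdot \sigma_p(x)$.

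To verify that loops in $C$ become $G$-fixed, I would represent $\gamma$ by a continuous map $[0,1] \to C$. Since $C \subset U(\RR) = U(\CC)^{\sigma_U}$, the involution $\sigma_U$ fixes $\gamma$ pointwise, so $\sigma_{U\ast}(\gamma) = \gamma$ in $\pi_1(U(\CC),p)$. The text immediately preceding the lemma records that the conjugation action of $G$ on $\pi_1^{\et}(U_\CC, \overline p)$ coming from \eqref{eq:semidirect} is compatible with the action $\alpha \mapsto \sigma_{U\ast}(\alpha)$ on $\pi_1(U(\CC),p)$, so the image of $\gamma$ in $\pi_1^{\et}(U_\CC, \overline p)$ is $G$-fixed, giving the desired commutation relation in $\pi_1^{\et}(U,\overline p)$.

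It remains to check that the action preserves cocycles and passes to $\rm H^1$. For $x \in \rm Z^1(G, H_{\overline p})$, using that $\rho_p^\CC(\gamma)$ is a group automorphism of $H_{\overline p}$ (as $H \to U$ is a group scheme) together with the commutation just proved, one computes
\[
(\gamma \cdot x) \cdot \sigma_p(\gamma \cdot x) = (\gamma \cdot x)(\gamma \cdot \sigma_p(x)) = \gamma \cdot (x \cdot \sigma_p(x)) = e,
\]
so $\gamma \cdot x \in \rm Z^1(G, H_{\overline p})$. Similarly, if $x = \beta y \sigma_p(\beta)^{-1}$ for some $\beta \in H_{\overline p}$, then
\[
\gamma \cdot x = (\gamma \cdot \beta)(\gamma \cdot y)\sigma_p(\gamma \cdot \beta)^{-1},
\]
showing that the equivalence relation defining $\rm H^1(G, H_{\overline p})$ is preserved and the induced action on $\rm H^1(G, H_{\overline p})$ is well defined.

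The only subtle point is the compatibility between the conjugation action of $G$ on $\pi_1^{\et}(U_\CC, \overline p)$ inherited from \eqref{eq:semidirect} and the topological action $\sigma_{U\ast}$ on $\pi_1(U(\CC), p)$; since this is stated in the paragraph preceding the lemma, no real obstacle remains.
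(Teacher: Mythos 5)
Your proposal is correct and follows essentially the same route as the paper: both reduce the claim to showing that a loop $\gamma$ in $C$, being pointwise fixed by $\sigma_U$, has image in $\pi_1^{\et}(U_\CC,\overline p)$ fixed under the conjugation action of $G$ arising from the splitting \eqref{eq:semidirect}, and then apply the homomorphism $\rho_p$. The only difference is cosmetic — the paper works out the semidirect-product arithmetic explicitly, whereas you invoke the stated compatibility and then additionally spell out the cocycle preservation and descent to $\rm H^1$, which the paper leaves implicit.
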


\proof
We need to show that for every $\alpha\in \pi_1(C,p)$, one has   
\begin{align}\label{align:equation:needtoprove}\rho_p^{C}(\alpha) \circ \sigma_p=\sigma_p\circ \rho_p^{C}(\alpha) \quad \quad \text{as maps} \quad \quad H_{\overline p} \to H_{\overline p}.\end{align}
Via the isomorphism (\ref{eq:semidirect}), we write each element $\beta \in \pi_1^{\et}(U, \overline p)$ as a pair $\beta = (\beta_1, \beta_2)$ with $\beta_1 \in \pi_1^\et(U_\CC, \overline p)$ and $\beta_2 \in G$. Denoting again by $\alpha$ the image of $\alpha$ in $\pi^{\etale}_1(U_{\mathbb C},\overline p)$, equation \eqref{align:equation:needtoprove} can be rewritten as
\begin{align} \label{align:rewrite}\rho_p(\alpha, e)^{-1} \circ \rho_p(e, \sigma) \circ  \rho_p(\alpha, e)=\rho_p(e, \sigma),\end{align}
where $e$ is the neutral element of $\pi^{\etale}_1(U_{\mathbb C},\overline p)$. 
Since $\rho_p$ is a group homomorphism, we have
$$\rho_p(\alpha, e)^{-1}\circ \rho_p(e, \sigma)\circ  \rho_p(\alpha, e)=\rho_p((\alpha^{-1},e)\cdot(e, \sigma)\cdot (\alpha, e)).
$$
The semi-direct product group structure gives 
$(\alpha^{-1},e)\cdot (e, \sigma)\cdot (\alpha, e)=(\alpha^{-1}\sigma^{-1}\alpha \sigma,\sigma).$ The image $\alpha \in \pi_1(U(\CC),p)$ of $\alpha \in \pi_1(C,p)$ satisfies $\sigma_{U\ast}(\alpha) = \sigma_U \circ \alpha = \alpha$. 
For the image $\alpha \in \pi_1^\et(U_\CC, \overline p)$, one therefore has $\sigma \cdot \alpha = \sigma^{-1} \alpha \sigma = \alpha$.  
Hence, we get 
$\rho_p(\alpha, e)^{-1}\circ \rho_p(e, \sigma)\circ \rho_p(\alpha, e)=\rho_p((\alpha^{-1},e)(e, \sigma)(\alpha, e))=\rho_p(e,\sigma),$ so that \eqref{align:rewrite} follows.
\endproof
\subsubsection{Change of base point.}\label{sec:changeofbasepoint}
In the previous section, we fixed a point \( p \in U(\mathbb{R}) \) to study the fiber \( H_p \). For concrete calculations, it will be important to understand how \( H_p \) varies when the point $p$ varies in $U(\RR)$. This is the content of the following proposition. 
\begin{proposition}\label{prop:changeofbasepoint}
Let $U$ be a geometrically connected scheme of finite type over $\RR$ with $U(\RR) \neq \emptyset$. Let $Y\rightarrow U$ be a finite \'etale cover. 
Let $p,q\in U(\mathbb R)$ and choose a topological path $\gamma_{q,p}$ from $q$ to $p$ in $U(\mathbb C)$. Define $\omega_{q,p} \coloneqq (\sigma_U \circ \gamma_{q,p}) \ast \gamma^{-1}_{q,p}\in \pi_1(U(\mathbb C),p)$ (where $\ast$ denotes the composition of paths). Then the following diagram commutes:
\[
\xymatrixcolsep{3pc}
\xymatrix{
Y_{\overline q} \ar[d]^-{(\gamma_{q,p})_\ast}\ar[r]^-{\sigma_q} & Y_{\overline q} \ar@{=}[r] & Y_{\overline q}\ar[d]^-{(\gamma_{q,p})_\ast} \\
Y_{\overline p} \ar[r]^{\sigma_p}& Y_{\overline p} \ar[r]^-{\omega_{q,p}} & Y_{\overline p}.
}
\]
Here, $\omega_{q,p}$ acts on $Y_{\overline p}$ as an element of $\pi_1(U(\mathbb C),p)$, and $(\gamma_{q,p})_\ast \colon Y_{\overline q} \xrightarrow{\sim} Y_{\overline p}$ is the canonical isomorphism induced by the path $\gamma_{q,p}$. 
\end{proposition}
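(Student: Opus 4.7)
The argument is a direct application of covering-space theory, exploiting the fact that the finite \'etale projection $\pi \colon Y(\mathbb{C}) \to U(\mathbb{C})$ is a topological covering that intertwines the real structures, i.e.\ $\pi \circ \sigma_Y = \sigma_U \circ \pi$.

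The first step is the following general compatibility between path-lifting and the real structure: for any continuous path $\delta \colon [0,1] \to U(\mathbb{C})$ and any $y \in Y(\mathbb{C})$ with $\pi(y) = \delta(0)$, letting $\tilde\delta$ denote the unique lift of $\delta$ starting at $y$, the path $\sigma_Y \circ \tilde\delta$ is a continuous lift of $\sigma_U \circ \delta$ starting at $\sigma_Y(y)$. By uniqueness of path-lifts, it is \emph{the} lift of $\sigma_U \circ \delta$ starting at $\sigma_Y(y)$. Applying this to $\delta = \gamma_{q,p}$, and noting that $\sigma_Y$ restricts to $\sigma_q$ on $Y_{\overline q}$ and to $\sigma_p$ on $Y_{\overline p}$ (since $p,q \in U(\mathbb{R})$), I obtain the key identity
\[
\sigma_p \circ (\gamma_{q,p})_\ast \;=\; (\sigma_U \circ \gamma_{q,p})_\ast \circ \sigma_q
\]
of bijections $Y_{\overline q} \to Y_{\overline p}$, where $(\sigma_U \circ \gamma_{q,p})_\ast$ is the fiber-transport map associated to the path $\sigma_U \circ \gamma_{q,p}$, which also runs from $q$ to $p$.

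The second step is to decompose $\sigma_U \circ \gamma_{q,p}$ in terms of $\gamma_{q,p}$ and the loop $\omega_{q,p}$ based at $p$. The defining identity $\omega_{q,p} = (\sigma_U \circ \gamma_{q,p}) \ast \gamma_{q,p}^{-1}$, combined with the standard cancellation $\gamma_{q,p}^{-1} \ast \gamma_{q,p} \simeq \mathrm{const}_q$ (rel.\ endpoints), exhibits $\sigma_U \circ \gamma_{q,p}$ as path-homotopic to the concatenation of $\gamma_{q,p}$ with $\omega_{q,p}$, in the order imposed by the convention used for $\ast$. Passing to induced maps on fibers (which convert concatenation of paths into composition of bijections) yields
\[
(\sigma_U \circ \gamma_{q,p})_\ast \;=\; (\omega_{q,p})_\ast \circ (\gamma_{q,p})_\ast,
\]
where $(\omega_{q,p})_\ast \colon Y_{\overline p} \to Y_{\overline p}$ is the monodromy action of $\omega_{q,p} \in \pi_1(U(\mathbb{C}),p)$.

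Substituting this into the identity from the first step and rearranging (using that $\sigma_p$ and $\sigma_q$ are involutions) produces the commutativity of the square in the statement. I do not anticipate a serious obstacle: the argument is a careful but routine unwinding of the definitions. The only delicate point is keeping consistent bookkeeping around the path-concatenation convention, so that $\omega_{q,p}$ is correctly realized as a loop based at $p$ both in its definition and in the decomposition of $\sigma_U \circ \gamma_{q,p}$.
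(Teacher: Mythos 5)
Your proposal is correct and takes essentially the same route as the paper's: both proceed by (i) observing that $\sigma_Y$ carries the lift of $\gamma_{q,p}$ through $y$ to the lift of $\sigma_U\circ\gamma_{q,p}$ through $\sigma_Y(y)$, giving $\sigma_p\circ(\gamma_{q,p})_\ast=(\sigma_U\circ\gamma_{q,p})_\ast\circ\sigma_q$, and (ii) replacing $\sigma_U\circ\gamma_{q,p}$ by the concatenation $\omega_{q,p}\ast\gamma_{q,p}$ to obtain $\sigma_p\circ(\gamma_{q,p})_\ast=\omega_{q,p}\circ(\gamma_{q,p})_\ast\circ\sigma_q$. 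One caveat on the final step: the ``rearranging using that $\sigma_p,\sigma_q$ are involutions'' does not literally yield the displayed square, which reads $(\gamma_{q,p})_\ast\circ\sigma_q=\omega_{q,p}\circ\sigma_p\circ(\gamma_{q,p})_\ast$; what drops out of your two steps (as well as the paper's own proof) is $\sigma_p\circ(\gamma_{q,p})_\ast=\omega_{q,p}\circ(\gamma_{q,p})_\ast\circ\sigma_q$, which matches the diagram only after replacing $\omega_{q,p}$ by $\omega_{q,p}^{-1}$ or commuting $\omega_{q,p}$ past $\sigma_p$ --- a bookkeeping discrepancy the paper itself shares between statement and proof, and immaterial for the subsequent applications.
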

\proof
For any path $\gamma_{q,p} \colon [0,1] \to U(\CC)$ from $q$ to $p$, and any point $y \in Y_{\overline q}$, we let $\widetilde{\gamma}^{y}_{q,p}$ be the unique path in $Y(\mathbb C)$ that lifts $\gamma_{q,p}$ and that satisfies $\widetilde \gamma_{q,p}^y(0) = y$. 
This yields an isomorphism
\[
(\gamma_{q,p})_\ast  \colon Y_{\overline q} \xrightarrow{\sim} Y_{\overline p}, \quad \quad y \mapsto \widetilde \gamma_{q,p}^y(1). 
\]
By construction, we have:
$$\sigma_Y((\gamma_{q,p})_\ast (y))=\sigma_Y(\widetilde{\gamma}_{q,p}^y(1))\quad \text{and} \quad \omega_{q,p} \cdot  (\gamma_{q,p})_\ast (\sigma_Y(y))=\omega_{q,p}\cdot  \widetilde{\gamma}^{\sigma_Y(y)}_{q,p}.$$ 
Observe that
$$\sigma_Y(\widetilde{\gamma}_{q,p}^y(1))=\left(\sigma_{U\ast}(\widetilde{\gamma}_{p,q}^y)\right)(1)$$
and that $\sigma_{U\ast}(\widetilde{\gamma}_{p,q}^y)$ is a path in $Y(\mathbb C)$ that lifts $\sigma_{U\ast}(\gamma_{q,p})$ and that starts at $\sigma_Y(y)$.
In other words, 
$$\sigma_Y\left((\gamma_{q,p})_\ast (y)\right)=\widetilde{\sigma_{U\ast}(\gamma_{q,p})}^{\sigma_Y(y)}(1).$$
On the other hand, by construction of the action of $\pi_1(U(\mathbb C),p)$ on $Y_{\overline p}$, one has 
$$\omega_{q,p} \cdot\left(  \widetilde{\gamma_{q,p}}^{\sigma_Y(y)}(1)\right)=\widetilde{\omega_{q,p} \ast \gamma_{q,p}}^{\sigma_Y(y)}(1).$$
But $\omega_{q,p}\ast \gamma_{q,p}=\sigma_{U\ast}(\gamma_{q,p})$ by definition of $\omega_{q,p}$, hence the proof is concluded. \endproof
\begin{example} \label{ex:prop-apply-elliptic}
	Let $U\subset \mathbb G_m$ be an open subset whose real part contains $[-1,0)$ and $(0,1]$ and let $\pi \colon E\rightarrow U$ be a family of elliptic curves over $\RR$. Let $p=1\in U(\mathbb R)$ and assume that $Y_{p}$ is a maximal real elliptic curve (that is, that $Y_p(\RR)$ has two connected components). Consider the local system
    $
    \pi_\ast\ZZ/2
    $
    of finite dimensional $\ZZ/2$-modules on $U_{\et}$, and let $Y \to U$
    be the associated finite \'etale cover. Thus, 
    \[
    Y_{\overline q} = \rm H^1(E_{q}(\CC),\ZZ/2) \quad \quad \text{for} \quad \quad q \in U(\RR).
    \]
    Since $E_{ p}$ is a maximal real elliptic curve, the $G$-action on $Y_{\overline p} = \rm H^1(E_{p}(\CC),\ZZ/2)$ is trivial.
\begin{enumerate}
	\item Assume that the action of the standard loop $\gamma$ around $0$ (viewed as an element of $\pi_1^{\et}(U_{\mathbb C},\overline p)$) on $\rm H^1(E_p(\CC),\ZZ/2)$ is not trivial (this happens for example for the family whose affine equation is $y^2=(x^2-t)(x+2)$ where $t$ is the coordinate of $U$).
	Let $q=-1$ and choose as $\gamma_{q,p}$ the standard "half circle" around $0$, so that $\omega_{q,p}=\gamma$, hence it acts non-trivially on $\pi_1^{\et}(U_{\mathbb C},\overline p)$. Since the action of $G$ on $\rm H^1(E_p(\CC),\ZZ/2)$ is trivial, we deduce from Proposition \ref{prop:changeofbasepoint} that the action of $G$ on $\rm H^1(E_q(\CC),\ZZ/2)$ is not trivial. In particular, the real elliptic curve $E_{q}$ is not maximal (that is, $Y_q(\RR)$ is connected).  
	\item Assume that the action of $\pi_1^{\et}(U_{\mathbb C},\overline p)$ on $\rm H^1(E_p(\CC),\ZZ/2)$ is trivial (this happens for example for the family whose affine equation is $y^2=x(x+t)(x+2)$ where $t$ is the coordinate of $U$).
Let $q=-1$. Since $\pi_1^\et(U_{\CC},\overline p)$ acts trivially on $\rm H^1(E_{\overline p}(\CC),\ZZ/2)$, for every choice of path $\gamma_{q,p}$ from  $-1$ to $1$, the loop $\omega$ acts trivially on  $\rm H^1(E_{ p}(\CC),\ZZ/2)$, so that from Proposition \ref{prop:changeofbasepoint}, we deduce that $E_q$ is maximal.  
\end{enumerate}
\end{example}

\subsection{Proof of Theorem \ref{thm:topcovering[U/H]} and its corollaries.}\label{sec:proofU/H} We are ready to proceed with the:
\begin{proof}[Proof of Theorem \ref{thm:topcovering[U/H]}]
Item \ref{itemthm:UH:2} is the content of Lemma \ref{lem:actionrestriction}, while item \ref{item:3:theoremUH} is a particular case of Proposition \ref{prop:changeofbasepoint}. 
Hence we are left to prove item \ref{itemthm:UH:1}. By \cite[Theorem 1.5]{AdGF-grup}, the morphism $f^{-1}(C)\rightarrow C$ is a topological covering. Moreover, it has fiber over $p$ isomorphic to $H^1(G,H_p(\mathbb C))$ by Proposition \ref{prop:fiber-H1}.
We need to show that the action of $\pi_1(C,c)$ on $f^{-1}(p)$ corresponds, under the isomorphism $f^{-1}(p)\simeq H^1(G,H_p(\mathbb C))$, to the action defined in item \ref{itemthm:UH:2}. 
For this, set 
$$Z^1_C\coloneqq\set{(u, g) \in C \times H(\mathbb C) \mid  g \in H_u(\mathbb C)\text{ and } g \sigma(g) = e}\subset C\times H(\mathbb C).$$
Recall from \cite[Theorem 5.9]{AdGF-grup} that one has a canonical homeomorphism $f^{-1}(C)\simeq Z^1_C/_\sim $ where $(u, g) \sim (u',g')$ if $u = u'$ and there exists $h \in H_u(\mathbb C)$ such that $g' = h g \sigma(h)^{-1}$. 
Hence, one has the following commutative diagram in which the vertical arrows are covering maps and the square on the right is cartesian:
\begin{center}
	\begin{tikzcd}
f^{-1}(C)\arrow{d}\ar[d]& Z^1_C\arrow{d}\arrow[two heads]{l}\arrow[hook]{r} & H(\mathbb C)_{\vert C}\arrow{d}\arrow[hook]{r}\arrow[phantom]{rd}{\Box} & H(\mathbb C)\arrow{d}\\
C\arrow[equal]{r}& C\arrow[equal]{r} & C\arrow[hook]{r} & U(\mathbb C).
			\end{tikzcd}
	\end{center} 
Since the upper left horizontal arrow is surjective, this concludes the proof of item \ref{itemthm:UH:1}, and thereby of Theorem \ref{thm:topcovering[U/H]}. 
\end{proof}
\begin{proof}[Proof of Corollary \ref{cor:gerbesovercurves}]
By Lemma \ref{lemma:inertia:H/U},  $\vert \mathcal I_{\mathcal X}(\mathbb C)\vert $ is homeomorphic to $H(\mathbb C)$, so we need to prove that 
\[
h^\ast(\va{[U/H](\RR)}) \leq h^\ast(H(\CC)),
\]
Consider the diagram of finite  topological coverings
\[
\xymatrix{
\va{[U/H](\RR)} \ar[r]^-f &U(\RR)  & H(\RR). \ar[l]_-g
}
\]
We claim that there exists a canonical embedding of sheaves \begin{align} \label{align:label-embeddingofsheaves}f_\ast\ZZ/2 \hookrightarrow g_\ast\ZZ/2. \end{align} To prove this, we may fix a connected component $C \subset U(\RR)$, and define the embedding after restricting both sheaves to $C$. Fix $p \in C$. 
Writing $A = H_{\overline p} = A$, we have  that $A = B \times M$ is a product of two $G$-modules $B$ and $M$, where $B$ has underlying abelian group $(\ZZ/2)^n$ for some $n \geq 0$ and $M[2] = 0$. Let $\sigma \colon A \to A$ be the involution corresponding to the $G$-module structure of $A$. On the one hand, the \'etale cover $g \colon H(\RR)|_C \to C$ has fiber $g^{-1}(p)= A^\sigma$ and corresponds to the action of $\pi_1(C,p)$ on $A^\sigma$ induced by the map $\pi_1(C,p) \to \pi_1(U(\CC), p)$ and the action of $\pi_1(U(\CC),p)$ on $A$ (which is, in turn, induced by the \'etale cover $H \to U$). On the other hand, the \'etale cover $f^{-1}(C) \to C$ corresponds by Theorem \ref{thm:topcovering[U/H]} to the action of $\pi_1(C,p)$ on $f^{-1}(p) = \rm H^1(G, A)$ induced by the morphism $\pi_1(C,p) \to \pi_1(U(\CC),p)$ and the action of $\pi_1(U(\CC),p)$ on $A$. Since $B$
is an $\mathbb F_2$-vector space
and $M$ has no non-trivial $2$-torsion, one has 
$\rm H^1(G,A) = \rm H^1(G,B)= B^\sigma/(1+\sigma)B$. 
By the above, the quotient map 
\begin{align} \label{align:map-quot}
A^\sigma = B^\sigma \times M^\sigma \to B^\sigma/(1+\sigma)B = \rm H^1(G,A)
\end{align}
is $\pi_1(C,p)$-equivariant. Applying the contravariant functor $(\ZZ/2)^{(-)}$ to \eqref{align:map-quot} gives a $\pi_1(C,p)$-equivariant embedding
\[
(f_*\ZZ/2)_p = (\ZZ/2)^{\rm H^1(G,A)} \hookrightarrow (\ZZ/2)^{A^\sigma} = (g_*\ZZ/2)_p.
\]
Since the sheaves $f_*\ZZ/2$ and $g_*\ZZ/2$ are locally constant, this yields the embedding \eqref{align:label-embeddingofsheaves}. 

By taking global sections of the embedding of sheaves \eqref{align:label-embeddingofsheaves}, we obtain the inequality
\begin{equation}\label{eq:inequalitycurvesgerbes1}
\dim \rm H^0(\va{[U/H](\RR)}, \ZZ/2) \leq \dim \rm H^0(H(\RR),\ZZ/2). 
\end{equation}
Since $f$ and $g$ are finite maps, we have $f_*=f_{!}$ and $g_*=g_{!}$. Hence, by taking global sections with compact support of the embedding \eqref{align:label-embeddingofsheaves}, we get 
\begin{equation}\label{eq:inequalitycurvesgerbes2}
\dim \rm H^0_c(\va{[U/H](\RR)}, \ZZ/2) \leq \dim \rm H_c^0(H(\RR),\ZZ/2).
\end{equation}
All the connected components of $U(\mathbb R)$ are open intervals or circles; as $f\colon\vert [U/H](\mathbb R)\vert \rightarrow U(\mathbb R)$ is a finite topological covering by Theorem \ref{thm:topcovering[U/H]}, this also holds for $\vert [U/H](\mathbb R)\vert$. Hence, $\dim H^1(K, \ZZ/2) = \dim H_1(K,\ZZ)$ for each such a connected component $K$, and the latter equals $\dim H^0_c(K,\ZZ/2)$ by Poincaré duality. Therefore, one has:
\begin{align*}h^\ast(\va{[U/H](\RR)})&=\dim \rm H^0(\va{[U/H](\RR)}, \ZZ/2)+\dim \rm H_c^0(\va{[U/H](\RR)}, \ZZ/2)\\
&\leq \dim \rm H^0(H(\RR), \ZZ/2)+\dim \rm H_c^0(H(\RR), \ZZ/2)\\
&=h^\ast(H(\RR)),
\end{align*}
where the inequality follows from (\ref{eq:inequalitycurvesgerbes1}) and (\ref{eq:inequalitycurvesgerbes2}). The classical Smith--Thom inequality \eqref{align:inequality-ST}
implies $h^\ast( H(\RR)) \leq h^\ast(H(\CC))$, so 
$
h^\ast(\va{[U/H](\RR)}) \leq h^\ast( H(\RR))) \leq h^\ast(H(\CC))$. 
\end{proof}

\begin{proof}[Proof of Corollary \ref{cor:gerbesovercurvesconcrete}]
Since $U$ is a smooth proper curve, by Theorem \ref{thm:topcovering[U/H]}, $\vert [U/H](\mathbb R)\vert $ is a disjoint union of circles, so that
$$h^{*}(\vert [U/H](\mathbb R)\vert )=2\cdot\#\pi_0(\va{[U/H](\mathbb R)}).$$
By Theorem \ref{thm:topcovering[U/H]}, for every connected component $C_i\in \pi_0( \vert [U/H](\mathbb R)\vert )$, the map
$f^{-1}(C_i)\rightarrow C_i$ is the \'etale cover corresponding to the action of $\pi_1(C_i,p_i)$ on $\rm H^1(G,H_{\overline p_i})$. Hence, $f^{-1}(C_i)$ has $\# \big(\rm H^1(G,H_{\overline p_i})/\pi_1(C_i,p_i)\big) $ connected components. Consequently, we get 
\begin{align}\label{align:descr-UH-R}h^{*}(\vert [U/H](\mathbb R)\vert)=\sum_{i = 1}^m 2\cdot\# \left(\frac{\rm H^1\left(G,H_{\overline p_i}\right)}{\pi_1(C_i,p_i)}\right).\end{align}
Assuming Conjecture \ref{conj:ST}, we have \begin{align} \label{align:proof-ST-UH}h^{\ast}(\vert [U/H](\mathbb R)\vert) \leq h^{*}(\vert \ca I_{[U/H]}(\mathbb C)\vert ).
\end{align}
Therefore, to finish the proof, it suffices to show that
\begin{align}\label{eq : concretesmiththom}
\begin{split}
	h^{*}(\vert \ca I_{[U/H]}(\mathbb C)\vert )&= 2\cdot\#\left(
    \frac{H_{\overline p}}{\pi_1(U(\mathbb C), p)}
    \right)\\&+\sum_{D\in (H_{\overline p}/\pi_1(U(\mathbb C), p))} 2\cdot\big (\# D \cdot(g-1) +1\big ),
    \end{split}
\end{align}
for indeed, by the fact that $\sum_D \# D = \# H_{\overline p}$, where the sum ranges over the orbits $D\in (H_{\overline p}/\pi_1(U(\mathbb C), p))$, equations \eqref{align:descr-UH-R}, \eqref{align:proof-ST-UH} and \eqref{eq : concretesmiththom} together imply equation \eqref{align:inequality:H/U}. 

By Lemma \ref{lemma:inertia:H/U}, the map $\vert \ca I_{[U/H]}(\mathbb C)\vert = H(\CC) \rightarrow \vert \mathcal [U/H](\mathbb C)\vert=U(\mathbb C) $ is the \'etale cover corresponding to the action of $\pi_1(U(\mathbb C),p)$ on $H_p(\CC)$. Hence, $\vert \ca I_{[U/H]}(\mathbb C)\vert $ has $\#(H_{\overline p}/\pi_1(U(\mathbb C),p))$ connected components, and each of these is a connected \'etale cover whose degree equals the cardinality of the orbit. Thus, (\ref{eq : concretesmiththom}) follows from the Riemann--Hurwitz formula, and the proof is finished.
\end{proof}

\section{Gerbes and the homotopy exact sequence} \label{sec:splitting}

In this section, we refine the results of Section \ref{sec:classifyingstack}, in order to be able to apply them in explicit examples. A key step is the reinterpretation of Proposition \ref{prop:changeofbasepoint} in terms of splittings of the homotopy exact sequence \eqref{eq:homotopy exact sequence}. This reinterpretation is developed in Section \ref{sec:topopathsplitting} and subsequently applied in Section \ref{sec:exenriques} to the study of certain gerbes over an Enriques surface.

We retain the notation introduced in Section \ref{sec:classifyingstack}. In particular, \( U \) denotes a geometrically connected scheme locally of finite type over \( \mathbb{R} \) with \( U(\mathbb{R}) \neq \emptyset \), and \( H \to U \) is a finite étale group scheme over \( U \).

\subsection{Topological paths and splitting of the homotopy exact sequence}\label{sec:topopathsplitting}
\subsubsection{Galois formalism.}
Write $\mathrm{Fset}$ for the category of finite sets and, for a scheme $Z$, $\rm{F}\etale(Z)$ for the category of finite \'etale covers of $Z$.
 Recall that for every $q\in U(\mathbb R)$, the group 
$\pi^{\etale}_1(U,\overline q)$ (resp.\ $\pi^{\etale}_1(U_{\mathbb C},\overline q)$) is  the automorphism of the functor
$$(-)_{\overline q}:\rm{F}\etale(U)\rightarrow \mathrm{Fset}\quad (\text{resp.\ } (-)^{\mathbb C}_{\overline q}:\rm{F}\etale(U_{\mathbb C})\rightarrow \mathrm{Fset})$$
sending $Y\rightarrow U$ (resp.\ $Y\rightarrow U_{\mathbb C}$) to the geometric fiber $Y_{\overline q}$. 
By the general formalism of Galois categories,  every isomorphism of functors
$\phi \colon (-)_{\overline q}\xrightarrow{\simeq} (-)_{\overline p}$ induces an isomorphism, well defined up to conjugation, 
$\varphi\colon \pi_1^\et(U,\overline q)\xrightarrow{\simeq} \pi_1^\et(U,\overline p),$
in such a way that the action of $G$ on $H_{\overline q}$
is induced by the action of $\pi_1^\et(U,\overline p)$ on $H_{\overline p}$ and the composition
$$G=\pi_1^\et(\Spec(\mathbb R))\xrightarrow{\pi_1(q)}\pi_1^\et(U,\overline q)\xrightarrow{\varphi}\pi_1^\et(U,\overline p).$$
\begin{remark}\label{rmk:nonuniques}
    Let us emphatize that, in general, the isomorphism $\varphi \colon \pi_1^\et(U,\overline q)\xrightarrow{\simeq}\pi_1^\et(U,\overline p)$ induced by an isomorphism of functors $\phi \colon (-)_{\overline q}\xrightarrow{\simeq} (-)_{\overline p}$ is not unique. It is only unique up to conjugation.
\end{remark}
Since both $\pi_1(p)$ and $\varphi\circ\pi_1(q)$ are splittings of the exact sequence 
$$0\rightarrow \pi_1^\et(U_{\mathbb C},\overline p)\rightarrow \pi_1^\et(U,\overline p)\rightarrow G\rightarrow 0,$$
to understand the action of $G$ on $H_{\overline q}$, one has to understand how the different splittings of this sequence are related. This is the main result of the section. 

\subsubsection{Splittings of the homotopy exact sequence.} Let $p,q \in U(\RR)$. Let $\gamma_{q,p} \colon [0,1] \to U(\CC)$ be a path from $q$ to $p$. 
The isomorphisms $(\gamma_{q,p})_\ast \colon Y_{\overline q}\simeq Y_{\overline p}$ induced by $\gamma_{q,p}$ fit together to give an isomorphism 
$\varphi^{\mathbb C}_{q,p}\colon (-)^{\mathbb C}_{\overline q}\xrightarrow{\simeq}(-)^{\mathbb C}_{\overline p}$ of fiber functors. In turn, this induces an isomorphism $\varphi^{\mathbb C}_{q,p}\colon \pi^{\et}_1(U_{\mathbb C},\overline q)\xrightarrow{\simeq }\pi^{\et}_1(U_{\mathbb C},\overline p),$ well defined up to conjugation, extending the usual isomorphism $\pi_1(U(\mathbb C),q)\rightarrow \pi_1(U(\mathbb C),p)$ defined by $\alpha\mapsto \gamma_{q,p}\alpha \gamma_{q,p}^{-1}$. Write $$\omega \coloneqq \sigma_{U\ast}(\gamma_{q,p})\ast \gamma_{q,p}^{-1} \in \pi_1(U(\CC), p),$$ 
where $\ast$ is the concatenation of paths. By abuse of notation, let $\omega \in \pi_1^\et(U_\CC, \overline p)$ be the image of $\omega$ under the natural morphism $\pi_1(U(\CC), p) \to \pi_1^\et(U_\CC, \overline p)$. 
\begin{proposition}\label{prop:splitting}
In the above notation, consider the map $f \colon G \to \pi_1^\et(U_\CC, \overline p) \rtimes G$ defined as the 
composition 
$$
f \colon G \xlongrightarrow{\pi_1(q)}
\pi_1^\et(U,\overline q) \xlongrightarrow{\varphi_{q,p}} \pi^{\et}_1(U,\overline p)\simeq \pi^{\et}_1(U_{\mathbb C},\overline p)\rtimes G,$$	
	where the isomorphism on the right is defined by the splitting of the homotopy exact sequence (\ref{eq:homotopy exact sequence}) induced by the section $\pi_1(p) \colon G \to \pi_1^\et(U,\overline p)$. Then $f(\sigma)$ is conjugate to $(\omega, \sigma)$. 
\end{proposition}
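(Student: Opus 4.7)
The plan is to compute the action of both $f(\sigma)$ and of $(\omega,\sigma)$ on the fiber $Y_{\overline p}$ of an arbitrary finite étale cover $Y\to U$, and then invoke the fact that such fiber actions determine elements of $\pi_1^\et(U,\overline p)$ (up to the inherent conjugacy ambiguity in the choice of $\varphi_{q,p}$ recalled in Remark \ref{rmk:nonuniques}) to conclude that $f(\sigma)$ is conjugate to $(\omega,\sigma)$.

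For the first computation, I would use the general Galois formalism recalled above: the element $\pi_1(q)(\sigma)\in \pi_1^\et(U,\overline q)$ acts on $Y_{\overline q}$ as the real structure $\sigma_{q,Y}$, and transporting through $\varphi_{q,p}$ shows that $\tau := \varphi_{q,p}(\pi_1(q)(\sigma))$ acts on $Y_{\overline p}$ as the conjugated map $(\gamma_{q,p})_\ast \circ \sigma_{q,Y}\circ (\gamma_{q,p})_\ast^{-1}$. Applying Proposition \ref{prop:changeofbasepoint} to $Y\to U$ rewrites this composition as $\omega\circ \sigma_{p,Y}$. For the second computation, I would just unwind the semidirect product structure: under the isomorphism $\pi_1^\et(U,\overline p)\cong \pi_1^\et(U_\CC,\overline p)\rtimes G$ induced by the splitting $\pi_1(p)$, the pair $(\omega,\sigma)$ corresponds to the product $\omega\cdot \pi_1(p)(\sigma)\in \pi_1^\et(U,\overline p)$, whose action on $Y_{\overline p}$ is exactly $\omega\circ \sigma_{p,Y}$.

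Putting these two computations together, $\tau$ and $(\omega,\sigma)$ induce the same automorphism of the restriction of the fiber functor $(-)_{\overline p}$ to every finite étale cover $Y\to U$; since $\pi_1^\et(U,\overline p)=\operatorname{Aut}((-)_{\overline p})$, this forces $\tau=(\omega,\sigma)$ for one particular choice of $\varphi_{q,p}$. As any other admissible choice differs from it by an inner automorphism of $\pi_1^\et(U,\overline p)$, the image $f(\sigma)$ is conjugate to $(\omega,\sigma)$, as claimed.

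The main obstacle is not conceptual but bookkeeping: one must verify that the composition conventions in the Galois formalism, in the action of $\pi_1^\et$ on geometric fibers, and in the semidirect product structure \eqref{eq:semidirect} are mutually consistent, and that the topological path $\gamma_{q,p}$ indeed induces the correct isomorphism of \emph{profinite} fundamental groups via its compatible action on the fibers of all finite étale covers (which is automatic, as each such cover restricts on $\CC$-points to a finite topological covering of $U(\CC)$). Beyond this, the geometric input is entirely contained in Proposition \ref{prop:changeofbasepoint}, which has already been established.
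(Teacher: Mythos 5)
Your argument is correct and follows essentially the same route as the paper: both reduce the claim to Proposition \ref{prop:changeofbasepoint} by comparing the actions on the geometric fiber $Y_{\overline p}$ of an arbitrary finite \'etale cover $Y\to U$, and conclude via the Galois-category identification $\pi_1^\et(U,\overline p)=\Aut((-)_{\overline p})$. The paper's proof is terser, simply stating ``after identifying $Y_{\overline p}$ and $Y_{\overline q}$ using $\gamma_{q,p}$, one has $\sigma_q=\omega\cdot\sigma_p$'' and that the proposition follows; you unwind both sides of that identity explicitly (in particular making the step from $(\omega,\sigma)$ to $\omega\cdot\pi_1(p)(\sigma)$ under \eqref{eq:semidirect} explicit), and you also handle the conjugacy ambiguity of $\varphi_{q,p}$ from Remark \ref{rmk:nonuniques} more carefully than the paper does — this is a helpful clarification rather than a deviation.
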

\begin{proof}
Let $Y\rightarrow U$ be a finite connected \'etale cover. 
By Proposition \ref{prop:changeofbasepoint}, 
the action of $G=\pi_1^\et(\Spec(\mathbb R))$ on 
$Y_{\overline p}$ 
induced by the action of $\pi_1^\et(U,\overline p)$ on $Y_{\overline p}$, and up to multiplying by $\omega$, the composition
$$G\xrightarrow{\pi_1(q)}\pi_1^{\etale}(U,\overline q)\xrightarrow{\varphi_{q,p}} \pi_1^{\etale}(U,\overline p)$$
identifies with the natural action of $G$ on $Y_{\overline p}$. To be precise, after identifying 
$Y_{\overline p}$ and $Y_{\overline q}$ using $\gamma_{q,p}$, one has $\sigma_q=\omega\cdot  \sigma_p$. 
The proposition follows from this. 
\end{proof}

\begin{examples}
    Let \( U = \mathbb{G}_m \) and take \( p = 1 \in \mathbb{G}_m(\mathbb{R}) \). In this case, the étale fundamental group is given by $\pi_1^{\etale}(U, \overline p) \simeq \hat{\mathbb{Z}} \rtimes G,
    $ where \( G \) acts on \( \hat{\mathbb{Z}} \) by inversion.
    
    \begin{enumerate}
        \item Take \( q = 2 \) and choose \( \gamma_{q,p} \) as the natural path contained in the real part connecting \( q \) to \( p \). In this case, \(\omega\) is the trivial loop, so the image of the section corresponding to \( \pi_1(q) \) is identified with \( (0,e) \).
        
        \item Again, take \( q = 2 \), but this time let \( \gamma_{q,p} \) be the path shown on the left in Figure \ref{fig: paths}, so that \(\omega\) is nontrivial. By construction, the class of \(\omega\) in \(\pi_1(U(\mathbb{C}), p) \simeq \mathbb{Z}\) is \( 2 \), so under this choice of \( \gamma \), the image of the section \( \pi_1(q) \) is \( (2,e) \). Although different from the previous case, we note that \( (2,e) \) is conjugate to \( (0,e) \) in \( \mathbb{Z} \rtimes G \), meaning that the conjugacy class of the section remains unchanged (see Remark \ref{rmk:nonuniques}). 
        
        \item Take \( q = -1 \) and choose \( \gamma_{q,p} \) as the "half-circle path" from \( -1 \) to \( 1 \), as depicted on the right in Figure \ref{fig: paths}. In this case, \( \gamma \) corresponds to the class of \( 1 \) in \( \mathbb{Z} \), so the image of the section \( \pi_1(q) \) is \( (1,e) \). Since \( (1,e) \) is not conjugate to \( (0,e) \), this section is genuinely different from the previous ones, even up to conjugation.
    \end{enumerate}
 \begin{figure}[h!]
        \begin{center}
            \begin{picture}(0,120)
                \put(-134,5){$0$}
                \put(-85,5){$1$}
                \put(-30,5){$2$}
                \put(-150,-5){\includegraphics[width=0.3\textwidth]{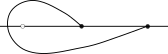}}
                \put(44,5){$-1$}
                \put(97,5){$0$}
                \put(136,5){$1$}
                \put(25,15){\includegraphics[width=0.3\textwidth]{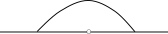}}
            \end{picture}
            \caption{Two paths in $\bb G_m(\CC)$.}
            \label{fig: paths}
        \end{center}
    \end{figure}
\end{examples}
\subsection{Smith--Thom for various split gerbes over an Enriques surface.}\label{sec:exenriques}
In this section, we apply the theory developed in the previous section -- particularly Theorem \ref{thm:topcovering[U/H]} -- to verify the Smith–Thom inequality for certain gerbes over an Enriques surface.

Let \( U \) be an Enriques surface such that \( U(\mathbb{R}) \neq \emptyset \), so that its K3 cover \( h \colon V \to U \) is defined over \( \mathbb{R} \). To simplify the discussion, we also assume that \( V(\mathbb{R}) \neq \emptyset \).  
Fix a point \( p \in U(\mathbb{R}) \) in the image of \( h \colon V(\mathbb{R}) \to U(\mathbb{R}) \). Recall that $\pi_1^{\et}(U_\CC,\overline p)\simeq \ZZ/2$, so that, since $\ZZ/2$ has no non-trivial automorphisms, the section \( \pi_1(p) \) induces an isomorphism 
$\pi_1^{\et}(U, \overline p) \simeq \pi_1^{\et}(U_\CC,\overline p) \times G$ such that the K3 cover \( h \colon V \to U \) corresponds to the morphism
\[
\pi_1 \colon \pi_1^\et(U_\CC,\overline p) \times G \to \mathbb{Z}/2\]
which is the composition of the projection onto the first factor and the isomorphism $\pi_1^\et(U_\CC,\overline p)\simeq \ZZ/2$. 

We let $1$ be the generator of $\pi_1^\et(U_\CC,\overline p)$. For every \( q \in U(\mathbb{R}) \), the group \( G \) acts trivially on \( V_{\overline q} \) if and only if \( q \) is in the image of \( h \colon V(\mathbb{R}) \to U(\mathbb{R}) \). Consequently, by Proposition \ref{prop:splitting}, the element \( \epsilon_q \) corresponding to the section associated with \( q \) is given by  
\[
\epsilon_q = 
\begin{cases}
(0,0) & \text{if } q \text{ is in the image of } h \colon V(\mathbb{R}) \to U(\mathbb{R}), \\
(1,0) & \text{otherwise}.
\end{cases}
\]  
For the remainder of this section, we let \(G= \mathbb{Z}/2 \) act on \( \mathbb{Z}/2 \oplus \mathbb{Z}/2 \) by exchanging the coordinates.
\begin{example}\label{ex: enriques 1}
    Assume that \( U(\mathbb{R}) \) is the union of four copies of \( \mathbb{P}^2(\mathbb{R}) \) and two spheres \( S^2 \), and that the map \( h \colon V(\mathbb{R}) \to U(\mathbb{R}) \) is surjective (such Enriques surfaces exist, as shown in \cite[Table 8, page 180]{itenberg-enriques}).  

    Let \( \pi_1 \colon \pi_1^\et(U_\CC,\overline p) \times G \to \mathbb{Z}/2 \) be the composition of the projection onto the first coordinate and the isomorphism $\pi_1^\et(U_\CC,\overline p)\simeq \ZZ/2$, and let \( \mathbb{Z}/2 \times G \) act on \( \mathbb{Z}/2 \oplus \mathbb{Z}/2 \) via this map. Denote by \( H \to U \) the corresponding group scheme. Since \( h \colon V(\mathbb{R}) \to U(\mathbb{R}) \) is surjective, for every \( q \in U(\mathbb{R}) \), the image of the section \( \pi_1(q) \) is \( (0,e) \). Consequently, the action of \( G \) on \( H_{\overline q} \) is trivial, implying that  
    $
    \mathrm{H}^1(G, H_{\overline q}) = \mathbb{Z}/2 \times \mathbb{Z}/2.
    $  
Let \( C \) be a connected component homeomorphic to \( S^2 \). Since \( S^2 \) is simply connected, the cover \( f^{-1}(C) \to C \) is trivial. Thus, the preimage of each such \( C \) under the map  
    $
    f \colon \lvert [U/H](\mathbb{R}) \rvert \to U(\mathbb{R})
    $  
    consists of four copies of \( S^2 \).  

    On the other hand, let \( C \) be a connected component homeomorphic to \( \mathbb{P}^2(\mathbb{R}) \). The natural map \( \pi_1(C) \to \pi_1(U(\mathbb{C})) \) is an isomorphism, as there is at least one spherical connected component in the real locus for one of the real structure on the K3 cover over \( C \) (see the discussion in \cite[Section 3.5]{itenberg-enriques}). Thus, the cover \( f^{-1}(C) \to C \) has three connected components, corresponding to the orbits  
    $
    \{(0,0)\}, \{(1,e)\}, \{(0,e), (1,0)\}
    $
    of the action of \( \pi_1(U(\mathbb{C})) \) on  
    $
    \mathrm{H}^1(G, H_{\overline q}) = \mathbb{Z}/2 \times \mathbb{Z}/2.
    $  
    Since the \( \pi_1(C) \)-action on \( \{(0,e), (1,0)\} \) is nontrivial, the corresponding cover is homeomorphic to the universal cover \( S^2 \to \mathbb{P}^2(\mathbb{R}) \). Hence, \( f^{-1}(C) \) is homeomorphic to the disjoint union of two copies of \( \mathbb{P}^2(\mathbb{R}) \) and one \( S^2 \).  

    In conclusion, \( \lvert [U/H](\mathbb{R}) \rvert \) is homeomorphic to the disjoint union of:
    \begin{itemize}
        \item four copies of \( S^2 \coprod \mathbb{P}^2(\mathbb{R}) \coprod \mathbb{P}^2(\mathbb{R}) \), each lying over a \( \mathbb{P}^2(\mathbb{R}) \), and
        \item two copies of \( \coprod_{1 \leq i \leq 4} S^2 \), each lying over an \( S^2 \).
    \end{itemize}
    In particular, we obtain  
    \[
    h^*(\lvert [U/H](\mathbb{R}) \rvert) = 4 \cdot (2+3+3) + 2 \cdot 8 = 48.
    \]  
By Lemma \ref{lemma:inertia:H/U}, the inertia \( \va{\ca I_{[U/H]}(\mathbb{C})} \to U(\mathbb{C}) \) corresponds to the cover associated with the action of \( \pi_1(U(\mathbb{C})) \) on \( H_{\overline p} \), which has three connected components, corresponding to the orbits  
    $
    \{(0,0)\}, \{(1,e)\},\{(0,e), (1,0)\}$.   
    Hence, \( \va{\ca I_{[U/H]}(\mathbb{C})} \) is the disjoint union of two copies of \( U(\mathbb{C}) \) and one copy of its K3 cover. In particular, $h^*(\va{\ca I_{[U/H]}(\mathbb{C})}) = 16 \cdot 2 + 24 = 56$. 
    Thus, the Smith–Thom inequality \eqref{align:inequality-ST-stacks} is verified for the stack $[U/H]$. 
\end{example}

\begin{example}
    Retaining the notation of Example \ref{ex: enriques 1}, we now assume that the image of the map \( V(\mathbb{R}) \to U(\mathbb{R}) \) consists of only three copies of \( \mathbb{P}^2(\mathbb{R}) \) and a single \( S^2 \) (such Enriques surfaces exist by \cite[Table 8, page 180]{itenberg-enriques}).  

    The description of the cover \( f^{-1}(C) \to C \) remains the same for the connected components in the image of \( V(\mathbb{R}) \to U(\mathbb{R}) \). However, it differs for the connected components \( C_1 \) and \( C_2 \), which are respectively homeomorphic to \( S^2 \) and \( \mathbb{P}^2(\mathbb{R}) \) but are not in the image. In the end, $\vert [U/H](\mathbb R) \vert$ has the following description.
        \begin{itemize}
        \item Three copies of \( S^2 \coprod \mathbb{P}^2(\mathbb{R}) \coprod \mathbb{P}^2(\mathbb{R}) \), each lying over a \( \mathbb{P}^2(\mathbb{R}) \);
                \item One copy of \( \coprod_{1 \leq i \leq 4} S^2 \), lying over an \( S^2 \);
        \item One copy of $\mathbb{P}^2(\mathbb{R})$, lying over an \( \mathbb{P}^2(\mathbb{R}) \) and one copy of \( S^2 \), lying over an \( S^2 \).
\end{itemize}
    This is illustrated in Figure \ref{fig: enriques} below, where the dark disks represent copies of \( \mathbb{P}^2(\mathbb{R}) \), and the light spheres represent copies of $S^2$.   

    \begin{figure}[h!]
        \begin{center}
            \begin{picture}(0,190)
                \put(-150,75){$\lvert [U/H](\mathbb{R}) \rvert$}
                \put(-150,15){$U(\mathbb{R})$}
                \put(-40,15){\includegraphics[width=0.3\textwidth]{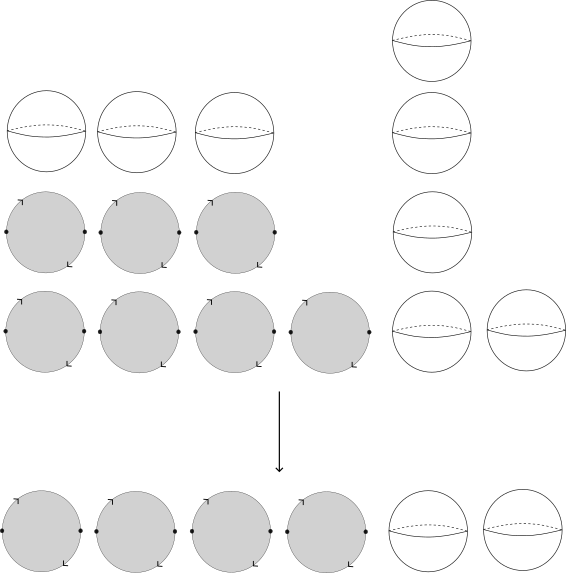}}
            \end{picture}
            \caption{The morphism \( \lvert [U/H](\mathbb{R}) \rvert \to U(\mathbb{R}) \)}
            \label{fig: enriques}
        \end{center}
    \end{figure}
    To justify this, choose a point \( q_i \in C_i \). Since \( q_i \) is not in the image of \( V(\mathbb{R}) \to U(\mathbb{R}) \), the action of \( G \) on \( V_{\overline q_i} \) is nontrivial. Consequently, the image of the section \( \pi_1(q_i) \) is \( (1,e) \), implying that \( G \) acts on \( H_{\overline q_i} \) by exchanging the coordinates. In particular,  
    $\mathrm{H}^1(G, H_{\overline q_i}) = 0$ is trivial, 
    so that \( f^{-1}(C_i) \to C_i \) is an isomorphism. As in the previous example, one verifies that the Smith–Thom inequality holds in this case as well.
\end{example}

\begingroup
\sloppy
\printbibliography
\endgroup

\vspace{5mm}

\textsc{Emiliano Ambrosi, Institut de Recherche Mathématique Avancée (IRMA),
7 Rue René Descartes, 67084 Strasbourg, France}\par\nopagebreak 
  \textit{E-mail address:} \texttt{eambrosi@unistra.fr}

\vspace{5mm}

\textsc{Olivier de Gaay Fortman, Department of Mathematics,
       Utrecht University,
       Budapestlaan 6, 3584 CD
       Utrecht, The Netherlands}\par\nopagebreak 
  \textit{E-mail address:} \texttt{a.o.d.degaayfortman@uu.nl}

\end{document}